\theoremstyle{definition}
\newtheorem{definition}{Definition}
\newtheorem{theorem}{Theorem}
\newtheorem{lemma}{Lemma}
\title{$\theta$-Lebesgue spaces }
\begin{document}
\author{Shouvik Datta Choudhury\thanks{shouvikdc8645@gmail.com, shouvik@capsulelabs.in}\\
  \small Gapcrud Private Limited (Capsule Labs)\\
  \small HA 130, Saltlake, Sector III, Bidhannagar,\\
  \small Kolkata - 700097, India}
\date{\today}
\maketitle
\maketitle
\begin{abstract}
Traditional \(L_p\) spaces are fundamental in functional analysis, demarcated by the relationship \(1/p + 1/q = 1\). This research pioneers the concept of \(\theta\)-Lebesgue space, stemming from a simultaneous weakening of both the classical \(L_p\) relation and its \(\theta\)-variant, \(1/(\theta(p)) + 1/(\theta(q)) = 1\). This conceptual shift addresses a gap in existing mathematical frameworks, aiming to create a space that  may encompasses a broader range of mathematical purpose. The primary objective is to rigorously demarcate the \(\theta\)-Lebesgue space within this new context, explore its foundational properties, and articulate its theoretical significance in the realm of functional analysis. The focus is on establishing the theoretical underpinnings and potential implications of this generalized space. Adopting a detailed analytical methodology, this paper demarcates the \(\theta\)-Lebesgue space under the relaxed conditions. It examines its normative and topological properties, and how these properties differentiate from and extend beyond traditional \(L_p\) spaces. The study involves a deep dive into the space's inherent theory and its implications for functional analysis.The paper reveals that the dual relaxation of \(L_p\) space relations leads to a unique set of properties within the \(\theta\)-Lebesgue space. Notably, it presents a generalized form of Hölder's inequality and explores the nuanced aspects of duality and reflexivity in this new context. The space's convergence and completeness properties are also investigated, revealing  differences from classical norm. The introduction of \(\theta\)-Lebesgue space, under these weakened conditions,  may represent a significant theoretical advancement in functional analysis. It aims to extend the boundaries of what constitutes a functional space. The establishment of the \(\theta\)-Lebesgue space under dual weakened conditions may offer a more inclusive and flexible framework for mathematical exploration. This theoretical advancement may enrich the discipline, paving the way for new mathematical theories and deepening our understanding of the complexities within functional spaces.
\end{abstract}
\textbf{MSC 2020:}
\begin{itemize}
    \item 46Bxx - Normed linear spaces and Banach spaces
    \item 46B20 - Geometry and structure of normed linear spaces
    \item 46E30 - Spaces of measurable functions (Lp-spaces, Orlicz spaces, Kthe function spaces, Lorentz spaces, rearrangement invariant spaces, ideal spaces, etc.)
    \item 26Dxx - Inequalities
    \item 26D10 - Inequalities involving derivatives and differential and integral operators
    \item 26D15 - Inequalities for sums, series and integrals
\end{itemize}

\section{Introduction}
[3] propounded the variable exponent Lebesgue spaces which are a generalization of the classical Lebesgue spaces, replacing the constant exponent p with a variable exponent function p(·) which creates a Banach space.
[4] considerd a generalized version of the small Lebesgue spaces, as the associate spaces of the grand Lebesgue spaces,They found a simplified expression for the norm, prove relevant properties, compute the fundamental function and discuss the comparison with the Orlicz spaces.
[5]proved that the concept of Lebesgue points generalizes naturally to the setting of variable exponent Lebesgue and Sobolev spaces and assumed that the variable exponent is log-Holder continuous, which, although restrictive, is a common assumption in variable exponent spaces.
[6] generalized the classical Luzin's theorem about existence of integral on the measurable function and its multidimensional analogues on the many popular classes of rearrangement invariant (r.i.) spaces, namely, on the so-called Grand Lebesgue Spaces.They demarcated Lebesgue spaces with variable exponents, \(L^{p(.)}\) .
\section{Comparison}
We demonstrate a comparison highlighting the unique features of the \(\theta\)-Lebesgue space and explaining why it might be considered advantageous or more versatile in certain contexts:

1. **Variable Exponent Lebesgue Spaces (D. Israfilov, 2016)**: These spaces generalize classical Lebesgue spaces by using a variable exponent function \(p(\cdot)\) instead of a constant exponent. The \(\theta\)-Lebesgue space further advances this concept by introducing a function-based relation that is more flexible than even the variable exponent approach, potentially accommodating a wider range of mathematical phenomena.

2. **Small Lebesgue Spaces (C. Capone and A. Fiorenza, 2005)**: While these spaces focus on the associate spaces of the grand Lebesgue spaces with a simplified expression for the norm, the \(\theta\)-Lebesgue space offers a broader generalization, potentially encompassing these small Lebesgue spaces within its framework due to its more generalized normative structure.

3. **Variable Exponent Lebesgue and Sobolev Spaces (P. Harjulehto and P. Hästö, 2004)**: These spaces, assuming log-Holder continuity for the variable exponent, are more restrictive compared to the \(\theta\)-Lebesgue space. Your space does not require such specific continuity conditions, allowing for greater flexibility and potentially wider applicability in theoretical analysis.

4. **Grand Lebesgue Spaces (E. Ostrovsky and L. Sirota, 2015)**: These spaces generalize Luzin's theorem and are demarcated on variable exponents. The \(\theta\)-Lebesgue space, by relaxing the fundamental relation of \(L_p\) spaces, could offer a more generalized approach that subsumes the properties of Grand Lebesgue Spaces while providing a novel perspective on integral properties of functions.

5. **Function-Dependent Exponent \(L^p\) Spaces (Lars Diening, 2011)**: Similar to variable exponent spaces, but with a specific focus on functions from \(\Sigma\) to \([1,\infty]\). The \(\theta\)-Lebesgue space's approach could potentially offer a more unified and flexible framework that includes these function-dependent exponent spaces as special cases.
In summary, the \(\theta\)-Lebesgue space's primary advantage lies in its highly generalized and flexible framework, which potentially allows it to encompass and extend the properties of the aforementioned spaces. This flexibility could make it a powerful tool in theoretical analysis, particularly in areas where existing spaces are too restrictive or specific.
\section*{$\theta$-Lebesgue Sequence Spaces}
\subsection*{$\theta$-Hölder and $\theta$-Minkowski  Inequalities}
In this section we investigate the $\theta$-Hölder and $\theta$-Minkowski inequality for sums. Due to their importance, they are definitely projected as the eminent cornerstone of  mathematical analysis.
Let $\theta(p)$= $\Psi(p) = \Lambda(p)^{\Psi(p)}$  be a strictly monotonic increasing function ranging between 1  and  $\infty$. We are dropping  $p$  for mathematical compactness.
In the development of the proofs presented in this paper, we have employed methodologies that are fundamentally rooted in the techniques outlined by [1]  yet generalising it in due course . This approach has been chosen due to its proven efficacy and relevance to our specific line of inquiry. We extend our acknowledgments to [1] for their  work, which has significantly contributed to the advancement of this field. Our application of these techniques is a testament to their enduring value and an acknowledgment of the  role they play in our research.

\begin{theorem}
     (Classical GT/factorization)[2]. Let $S, T$ be compact sets. For any bounded bilinear form $\omega: C(S) \times C(T) \rightarrow \mathbb{K}$ (here $\mathbb{K}=\mathbb{R}$ or $\mathbb{C}$ ) there are probabilities $\lambda$ and $\mu$, respectively on $S$ and $T$, such that
$$
\forall(\alpha, \beta) \in C(S) \times C(T) \quad|\omega(\alpha,\beta)| \leq K\|\omega\|\left(\int_S|\alpha|^2 d \lambda\right)^{1 / 2}\left(\int_T|\beta|^2 d \mu\right)^{1 / 2}
$$
where $K$ is a numerical constant, the best value of which is denoted by $K_G$, more precisely by $K_G^{\mathbb{R}}$ or $K_G^{\mathbb{C}}$ depending whether $\mathbb{K}=\mathbb{R}$ or $\mathbb{C}$.
Equivalently, the linear map $\widetilde{\varphi}: C(S) \rightarrow C(T)^*$ associated to $\varphi$ admits a factorization of the form
$\widetilde{\varphi}=J_\mu^* u J_\lambda$ where $J_\lambda: C(S) \rightarrow L_2(\lambda)$ and $J_\mu: C(T) \rightarrow L_2(\mu)$ are the canonical (norm 1) inclusions and $u: L_2(\lambda) \rightarrow L_2(\mu)^*$ is a bounded linear operator with $\|u\| \leq K\|\varphi\|$.

\end{theorem}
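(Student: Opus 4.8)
The plan is to establish the inequality first and then read off the factorization as a formal consequence of it. Everything rests on the \emph{discrete Grothendieck inequality}: there is a universal constant $K_G$ such that for every finite scalar matrix $(a_{ij})$ and all unit vectors $(x_i)$, $(y_j)$ in a Hilbert space $H$ over $\mathbb{K}$,
$$
\left|\sum_{i,j} a_{ij}\,\langle x_i, y_j\rangle\right| \;\le\; K_G\,\sup\Bigl\{\,\Bigl|\textstyle\sum_{i,j} a_{ij}\,s_i t_j\Bigr| : |s_i|\le 1,\ |t_j|\le 1\,\Bigr\}.
$$
Granting this, the integral estimate and the factorization follow by soft functional-analytic arguments, and the work of the proof is concentrated in the bound on $K_G$.

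First I would recast the desired conclusion in dual form. For $\omega$ with $\|\omega\|\le 1$, producing probabilities $\lambda,\mu$ with $|\omega(\alpha,\beta)|\le K\|\alpha\|_{L_2(\lambda)}\|\beta\|_{L_2(\mu)}$ is, by a Hahn--Banach separation (a Ky Fan minimax over the weak-$*$ compact convex sets of probability measures on $S$ and $T$), equivalent to the single uniform estimate
$$
\Bigl|\sum_{k=1}^{N}\omega(\alpha_k,\beta_k)\Bigr| \;\le\; K\,\Bigl\|\bigl(\textstyle\sum_k|\alpha_k|^2\bigr)^{1/2}\Bigr\|_{\infty}\,\Bigl\|\bigl(\textstyle\sum_k|\beta_k|^2\bigr)^{1/2}\Bigr\|_{\infty}
$$
holding for all finite families $(\alpha_k)\subset C(S)$, $(\beta_k)\subset C(T)$. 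The point of this reduction is that the right-hand side is the supremum over $s\in S$, $t\in T$ of the Hilbert-space norms of the vectors $x(s)=(\alpha_k(s))_k$ and $y(t)=(\beta_k(t))_k$; after normalising these vectors the uniform estimate is precisely an instance of the discrete inequality applied to the matrix built by evaluating $\omega$ against the $\alpha_k,\beta_k$. Step~2 is therefore to carry out this identification and deduce the uniform estimate from the discrete inequality.

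Step~3, the proof of the discrete inequality, is the substantive core. I would use the Gaussian / Krivine method: realise the unit vectors inside $L_2$ of a Gaussian measure and exploit the identity $\mathbb{E}\,[\,\mathrm{sign}\langle g,x\rangle\,\mathrm{sign}\langle g,y\rangle\,]=\tfrac{2}{\pi}\arcsin\langle x,y\rangle$ for a standard Gaussian vector $g$. The Krivine trick first replaces $x_i,y_j$ by vectors $x_i',y_j'$ in a larger Hilbert space with $\langle x_i',y_j'\rangle=\sin\!\bigl(c\,\langle x_i,y_j\rangle\bigr)$, chosen so that applying the arcsine identity reproduces $\langle x_i,y_j\rangle$ exactly up to the factor $c=\tfrac{\pi}{2}\big/\mathrm{arcsinh}(1)$; since $\mathrm{sign}\langle g,\cdot\rangle$ takes values in $[-1,1]$, the resulting sign-sum is dominated by the scalar supremum on the right, giving $K_G\le \pi/\bigl(2\ln(1+\sqrt2)\bigr)$ in the real case, with the complex case handled by the analogous construction.

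Finally, the factorization is read off from the integral inequality. The estimate $|\omega(\alpha,\beta)|\le K\|\omega\|\,\|\alpha\|_{L_2(\lambda)}\|\beta\|_{L_2(\mu)}$ says that $\omega$ descends, through the norm-one inclusions $J_\lambda:C(S)\to L_2(\lambda)$ and $J_\mu:C(T)\to L_2(\mu)$, to a bounded bilinear form on the images $J_\lambda(C(S))$ and $J_\mu(C(T))$, which extends by density to all of $L_2(\lambda)\times L_2(\mu)$; Riesz representation then furnishes a bounded operator $u:L_2(\lambda)\to L_2(\mu)^*$ with $\|u\|\le K\|\omega\|$ and $\omega(\alpha,\beta)=\langle u J_\lambda\alpha,\,J_\mu\beta\rangle$, that is, $\widetilde{\varphi}=J_\mu^* u J_\lambda$. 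I expect the main obstacle to be Step~3: the universal bound on $K_G$ is the one genuinely hard, non-formal ingredient, whereas the reduction of Step~1 and the factorization are routine once the inequality is in hand.
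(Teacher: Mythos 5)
Your proposal is a correct outline of the standard proof, but note that the paper itself never proves this statement: Theorem 1 is quoted, with no proof, from the cited survey of Pisier [2] and is used downstream purely as a black box (it supplies the constant $K^2\|\omega\|^2$ factor appearing in Lemma 1). So the comparison is really with the classical literature rather than with anything in this paper. Against that benchmark your architecture is the standard one and is sound: (i) the equivalence, via Hahn--Banach separation or a minimax argument together with the homogeneity/AM--GM trick $2ab\le a^2+b^2$, between the two-measure estimate and the uniform square-function inequality; (ii) reduction to the discrete matrix inequality; (iii) Krivine's Gaussian-sign argument giving $K_G^{\mathbb{R}}\le \pi/\bigl(2\ln(1+\sqrt{2})\bigr)$; (iv) the soft passage from the inequality to the factorization $\widetilde{\varphi}=J_\mu^* u J_\lambda$ by density of $C(S)$ in $L_2(\lambda)$ and Riesz representation. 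Two points in your sketch deserve tightening. In Step 2 the uniform estimate is not ``precisely an instance'' of the discrete inequality applied to a matrix built from the $\alpha_k,\beta_k$: one must first discretize, approximating $s\mapsto(\alpha_k(s))_k$ and $t\mapsto(\beta_k(t))_k$ by maps constant on the cells of a fine partition, and the matrix is $a_{ij}=\omega(\varphi_i,\psi_j)$ for a subordinate partition of unity $(\varphi_i)$, $(\psi_j)$; it is exactly this choice that makes the scalar supremum $\sup_{|s_i|,|t_j|\le 1}\bigl|\sum_{i,j}a_{ij}s_it_j\bigr|$ dominated by $\|\omega\|$. In Step 3 your constant $c$ is used inconsistently: the scaling inside the sine must be $c=\ln(1+\sqrt{2})$, so that $\sinh c=1$ normalizes the Krivine vectors, and the resulting bound is then $K\le \pi/(2c)$; as written you substitute the final constant for the scaling. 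Neither issue is fatal, and both are repaired by the standard arguments found in [2].
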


\begin{definition}
The space $\ell_{\Lambda^{\Psi}}^{n, \omega}$, with $1 \leq \Lambda^{\Psi} < \infty$, denotes the $n$-dimensional vector space $\mathbb{R}^{n}$ for which the functional
$$
\|\mathbf{x}\|_{\ell_{\Lambda^\Psi}^{n}}=\left(\sum_{i=1}^{n}\left|\omega(\alpha_{i},\beta_{i})x_{i}\right|^{\Lambda^\Psi}\right)^{\frac{1}{\Lambda^\Psi}}
$$
'resonates' as finite, where $\mathbf{x}=\left((\omega(\alpha_{1},\beta_{1}))x_{1}, \ldots, (\omega(\alpha_{n},\beta_{n}))x_{n}\right)$. In the question of $\Lambda^\Psi=\infty$, we demarcate $\ell_{\theta(\infty)}^{n}$ as
$$
\|\mathbf{x}\|_{\ell_{\theta(\infty)}^{n}}=\sup _{i \in\{1, \ldots, n\}}\left|\omega(\alpha_{i},\beta_{i})x_{i}\right| .
$$
\end{definition}

\begin{lemma} ($\theta$-Hölder's inequality).
Let $\Lambda^\Psi$ and $\frac{\Lambda^\Psi}{\Lambda^\Psi - 1}$ be real numbers with $1<\Lambda^\Psi<\infty$ . Then

$$
\sum_{k=1}^{n}\left|(\omega(\alpha_{k},\beta_{k}))x_{k} (\omega(\alpha_{k},\beta_{k}))y_{k}\right| \leq\left(\sum_{k=1}^{n}\left|(\omega(\alpha_{k},\beta_{k}))x_{k}\right|^{\Lambda^\Psi}\right)^{1 / \Lambda^\Psi}\left(\sum_{k=1}^{n}\left|(\omega(\alpha_{k},\beta_{k}))y_{k}\right|^{\frac{\Lambda^\Psi}{\Lambda^\Psi - 1}}\right)^{1 / \frac{\Lambda^\Psi}{\Lambda^\Psi - 1}}
$$
$$
\leq K^{2} \|\omega\|^{2} \left(\int_S |\alpha|^2 \, d\lambda\right) \left(\int_T |\beta|^2 \, d\mu\right)
\left(\sum_{k=1}^{n}\left|(\omega(\alpha_{k},\beta_{k}))x_{k}\right|^{\Lambda^\Psi}\right)^{1 / \Lambda^\Psi}\left(\sum_{k=1}^{n}\left|(\omega(\alpha_{k},\beta_{k}))y_{k}\right|^{\frac{\Lambda^\Psi}{\Lambda^\Psi - 1}}\right)^{1 / \frac{\Lambda^\Psi}{\Lambda^\Psi - 1}}
$$
for $(\omega(\alpha_{k},\beta_{k}))x_{k}, (\omega(\alpha_{k},\beta_{k}))y_{k} \in \mathbb{R}$.
\end{lemma}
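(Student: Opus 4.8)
The plan is to establish the two inequalities in turn, treating the scalar weights $\omega(\alpha_k,\beta_k)$ as fixed coefficients that are absorbed into the two sequences. Setting $u_k=\omega(\alpha_k,\beta_k)x_k$ and $v_k=\omega(\alpha_k,\beta_k)y_k$, and writing $p=\Lambda^\Psi$, $q=\tfrac{\Lambda^\Psi}{\Lambda^\Psi-1}$, I would first record that $p$ and $q$ are conjugate, since $\tfrac1p+\tfrac1q=\tfrac{1}{\Lambda^\Psi}+\tfrac{\Lambda^\Psi-1}{\Lambda^\Psi}=1$, with $1<p<\infty$ and correspondingly $1<q<\infty$. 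Once this is fixed, the first inequality is exactly the classical discrete Hölder inequality applied to $(u_k)$ and $(v_k)$, so the content of the lemma beyond the classical case is entirely carried by the Grothendieck prefactor in the second line.

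For the Hölder step I would proceed in the standard way. First I would dispose of the degenerate cases in which one of the two sums on the right vanishes, since then each corresponding $u_k$ (or $v_k$) is zero and both sides equal $0$. Away from these cases I would normalise, replacing $u_k$ by $u_k/\|u\|_p$ and $v_k$ by $v_k/\|v\|_q$ so that $\sum_k|u_k|^p=\sum_k|v_k|^q=1$. The analytic engine is Young's inequality, $ab\le \tfrac{a^p}{p}+\tfrac{b^q}{q}$ for $a,b\ge 0$, which itself follows from concavity of the logarithm (equivalently, convexity of the exponential). Applying it with $a=|u_k|$, $b=|v_k|$ and summing over $k$ gives $\sum_k|u_kv_k|\le \tfrac1p+\tfrac1q=1$; undoing the normalisation recovers the first displayed inequality.

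For the second inequality the idea is to invoke the cited classical Grothendieck/factorization theorem. That result supplies $|\omega(\alpha,\beta)|\le K\|\omega\|\bigl(\int_S|\alpha|^2\,d\lambda\bigr)^{1/2}\bigl(\int_T|\beta|^2\,d\mu\bigr)^{1/2}$, and squaring yields $|\omega(\alpha,\beta)|^2\le K^2\|\omega\|^2\bigl(\int_S|\alpha|^2\,d\lambda\bigr)\bigl(\int_T|\beta|^2\,d\mu\bigr)$. Since the product of the $p$- and $q$-norms appearing on the right of the first inequality is a nonnegative quantity, the second inequality reduces to showing that the scalar prefactor $K^2\|\omega\|^2\bigl(\int_S|\alpha|^2\,d\lambda\bigr)\bigl(\int_T|\beta|^2\,d\mu\bigr)$ is at least $1$, because multiplying a nonnegative right-hand side by a factor $\ge 1$ can only increase it.

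The main obstacle I anticipate is precisely this last point: Grothendieck's theorem furnishes an \emph{upper} bound on $|\omega(\alpha,\beta)|$, whereas the chain as written needs the prefactor to \emph{dominate} $1$. To close the gap I would fix a normalisation of the test pair $(\alpha,\beta)$ — for instance choosing $(\alpha,\beta)$ so that $|\omega(\alpha,\beta)|\ge 1$, or renormalising $\omega$ so that $\|\omega\|\ge 1$ — and then read the squared bound as $1\le|\omega(\alpha,\beta)|^2\le K^2\|\omega\|^2\bigl(\int_S|\alpha|^2\,d\lambda\bigr)\bigl(\int_T|\beta|^2\,d\mu\bigr)$. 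I would state this normalisation explicitly as a hypothesis, since without some such convention the second inequality need not hold in the stated direction. With that convention in force the two inequalities chain together and the lemma follows.
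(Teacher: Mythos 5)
Your treatment of the first inequality is exactly the paper's argument: the paper also normalises (it calls the normalised quantities $\alpha$ and $\beta$, somewhat confusingly reusing the names from Theorem~1), applies Young's inequality termwise, and sums over $k$. On this part you and the paper coincide, and your version is if anything cleaner, since you dispose of the degenerate vanishing-norm cases explicitly.

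On the second inequality, however, you should know that the paper's entire justification is the single sentence ``The final expression is obtained using Grothendieck's inequality'' --- nothing more. The obstacle you flagged is therefore not merely an anticipated difficulty in your own plan; it is the actual, unrepaired gap in the paper's proof. As you observe, the second inequality is equivalent to the assertion that the scalar prefactor $K^{2}\|\omega\|^{2}\bigl(\int_S|\alpha|^{2}\,d\lambda\bigr)\bigl(\int_T|\beta|^{2}\,d\mu\bigr)$ is at least $1$ (whenever the product of the $\Lambda^\Psi$- and $\frac{\Lambda^\Psi}{\Lambda^\Psi-1}$-norms is nonzero), and Theorem~1 supplies only an \emph{upper} bound on $|\omega(\alpha,\beta)|$, which gives no lower bound whatsoever on that prefactor. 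Compounding this, the statement of the lemma never quantifies $\alpha$, $\beta$, $\lambda$, $\mu$ at all --- they are silently imported from Theorem~1 --- so there is no normalisation in force that could make the step legitimate. Your proposed repair (adding an explicit hypothesis such as $|\omega(\alpha,\beta)|\ge 1$ for the test pair, or $\|\omega\|\ge 1$ with suitably normalised integrals) is a reasonable way to make the chain true, but be clear that this proves a modified statement, not the lemma as written; the lemma as written is not established by the paper either. In short: your proof of the first inequality matches the paper; for the second, you have correctly identified a genuine defect that the paper papers over with one unsupported sentence, and your explicit flagging of the needed extra hypothesis is the more rigorous account.
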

\begin{proof}
Let us take

$$
\alpha=\frac{\left|(\omega(\alpha_{k},\beta_{k}))x_{k}\right|}{\left(\sum_{k=1}^{n}\left|(\omega(\alpha_{k},\beta_{k}))x_{k}\right|^{\Lambda^\Psi}\right)^{1 / \Lambda^\Psi}}, \quad \beta=\frac{\left|(\omega(\alpha_{k},\beta_{k}))y_{k}\right|}{\left(\sum_{k=1}^{n}\left|(\omega(\alpha_{k},\beta_{k}))y_{k}\right|^{\frac{\Lambda^\Psi}{\Lambda^\Psi - 1}}\right)^{1 / \frac{\Lambda^\Psi}{\Lambda^\Psi - 1}}}
$$

By Young's inequality we deduce

$$
\frac{\left|(\omega(\alpha_{k},\beta_{k}))x_{k}\right|\left|(\omega(\alpha_{k},\beta_{k}))y_{k}\right|}{\left(\sum_{k=1}^{n}\left|(\omega(\alpha_{k},\beta_{k}))x_{k}\right|^{\Lambda^\Psi}\right)^{1 / \Lambda^\Psi}\left(\sum_{k=1}^{n}\left|(\omega(\alpha_{k},\beta_{k}))y_{k}\right|^{\frac{\Lambda^\Psi}{\Lambda^\Psi - 1}}\right)^{1 / \frac{\Lambda^\Psi}{\Lambda^\Psi - 1}}}
$$
$$
\leq \frac{1}{\Lambda^\Psi} \frac{\left|(\omega(\alpha_{k},\beta_{k}))x_{k}\right|^{\Lambda^\Psi}}{\sum_{k=1}^{n}\left|(\omega(\alpha_{k},\beta_{k}))x_{k}\right|^{\Lambda^\Psi}}
$$
$$
+ \frac{1}{\frac{\Lambda^\Psi}{\Lambda^\Psi - 1}} \frac{\left|(\omega(\alpha_{k},\beta_{k}))y_{k}\right|^{\frac{\Lambda^\Psi}{\Lambda^\Psi - 1}}}{\sum_{k=1}^{n}\left|(\omega(\alpha_{k},\beta_{k}))y_{k}\right|^{\frac{\Lambda^\Psi}{\Lambda^\Psi - 1}}}.
$$
Prolonged addition relegates

$$
\frac{\sum_{k=1}^{n}\left|(\omega(\alpha_{k},\beta_{k}))x_{k}\right|\left|(\omega(\alpha_{k},\beta_{k}))y_{k}\right|}{\left(\sum_{k=1}^{n}\left|(\omega(\alpha_{k},\beta_{k}))x_{k}\right|^{\Lambda^\Psi}\right)^{1 / \Lambda^\Psi}\left(\sum_{k=1}^{n}\left|(\omega(\alpha_{k},\beta_{k}))y_{k}\right|^{\frac{\Lambda^\Psi}{\Lambda^\Psi - 1}}\right)^{1 / \frac{\Lambda^\Psi}{\Lambda^\Psi - 1}}} 
$$
$$
\leq \frac{1}{\Lambda^\Psi}+\frac{1}{\frac{\Lambda^\Psi}{\Lambda^\Psi - 1}}
$$

and from this we deduce

$$
\sum_{k=1}^{n}\left|(\omega(\alpha_{k},\beta_{k}))x_{k} (\omega(\alpha_{k},\beta_{k}))y_{k}\right| 
$$
$$
\leq\left(\sum_{k=1}^{n}\left|(\omega(\alpha_{k},\beta_{k}))x_{k}\right|^{\Lambda^\Psi}\right)^{1 / \Lambda^\Psi}\left(\sum_{k=1}^{n}\left|(\omega(\alpha_{k},\beta_{k}))y_{k}\right|^{\frac{\Lambda^\Psi}{\Lambda^\Psi - 1}}\right)^{1 / \frac{\Lambda^\Psi}{\Lambda^\Psi - 1}}
$$
The final expression is obtained using Grothendieck's inequality.
We can deconstruct the  above inequality in the adherent procedure: If $\mathbf{x} \in \ell_{\Lambda^\Psi}^{n}$ and $\mathbf{y} \in \ell_{\frac{\Lambda^\Psi}{\Lambda^\Psi - 1}}^{n}$ adhering $\mathbf{x} \odot \mathbf{y} \in \ell_{1}^{n}$ where $\odot$ project component-wise multiplication and moreover

$$
\|\mathbf{x} \odot \mathbf{y}\|_{\ell_{1}^{n}} 
$$
$$
\leq\|\mathbf{x}\|_{\ell_{\Lambda^\Psi}^{n}}\|\mathbf{y}\|_{\ell_{\frac{\Lambda^\Psi}{\Lambda^\Psi - 1}}^{n}}
$$
\end{proof}
\begin{lemma}($\theta$-Minkowski's inequality).

Let $\Lambda^\Psi \geq 1$, implying
$$
{\setlength{\mathindent}{0cm}
\left(\sum_{k=1}^{n}\left|(\omega(\alpha_{k},\beta_{k}))x_{k}+(\omega(\alpha_{k},\beta_{k}))y_{k}\right|^{\Lambda^\Psi}\right)^{1 / \Lambda^\Psi}
}
$$
$$
{\setlength{\mathindent}{0cm}
\leq\left(\sum_{k=1}^{n}\left|(\omega(\alpha_{k},\beta_{k}))x_{k}\right|^{\Lambda^\Psi}\right)^{1 / \Lambda^\Psi}
}
$$
$$
+\left(\sum_{k=1}^{n}\left|(\omega(\alpha_{k},\beta_{k}))y_{k}\right|^{\Lambda^\Psi}\right)^{1 / \Lambda^\Psi}
$$
for $(\omega(\alpha_{k},\beta_{k}))x_{k}, (\omega(\alpha_{k},\beta_{k}))y_{k} \in \mathbb{R}$.
\end{lemma}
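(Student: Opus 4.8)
The plan is to recognize this statement as the classical Minkowski inequality in disguise: since the weight $\omega(\alpha_k,\beta_k)$ is common to both summands on the left, setting $a_k = \omega(\alpha_k,\beta_k)x_k$ and $b_k = \omega(\alpha_k,\beta_k)y_k$ collapses the left-hand side to $\left(\sum_{k=1}^n |a_k + b_k|^{\Lambda^\Psi}\right)^{1/\Lambda^\Psi}$. Thus it suffices to establish the triangle inequality for the $\ell_{\Lambda^\Psi}^n$ functional on the vectors $(a_k)$ and $(b_k)$, and the whole weighted structure is absorbed uniformly before any exponent manipulation begins.

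First I would dispose of the boundary value $\Lambda^\Psi = 1$, where the claim follows at once from the scalar triangle inequality $|a_k + b_k| \leq |a_k| + |b_k|$ summed over $k$. For the remaining range $1 < \Lambda^\Psi < \infty$ I would invoke the pointwise factorization $|a_k + b_k|^{\Lambda^\Psi} = |a_k + b_k|\,|a_k + b_k|^{\Lambda^\Psi - 1} \leq (|a_k| + |b_k|)\,|a_k + b_k|^{\Lambda^\Psi - 1}$, then sum over $k$ and split the result into the two sums $\sum_k |a_k|\,|a_k+b_k|^{\Lambda^\Psi-1}$ and $\sum_k |b_k|\,|a_k+b_k|^{\Lambda^\Psi-1}$.

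The decisive step is to apply the $\theta$-Hölder inequality of the preceding lemma, in its first (unscaled) form, to each of these two sums, pairing the exponent $\Lambda^\Psi$ with its conjugate $\frac{\Lambda^\Psi}{\Lambda^\Psi - 1}$. Because $(\Lambda^\Psi - 1)\cdot \frac{\Lambda^\Psi}{\Lambda^\Psi-1} = \Lambda^\Psi$, each application extracts the common factor $\left(\sum_{k=1}^n |a_k + b_k|^{\Lambda^\Psi}\right)^{(\Lambda^\Psi - 1)/\Lambda^\Psi}$, so that collecting terms gives $\sum_k |a_k+b_k|^{\Lambda^\Psi} \leq \left(\|(a_k)\|_{\ell_{\Lambda^\Psi}^n} + \|(b_k)\|_{\ell_{\Lambda^\Psi}^n}\right)\left(\sum_k |a_k+b_k|^{\Lambda^\Psi}\right)^{(\Lambda^\Psi-1)/\Lambda^\Psi}$, and dividing through by the common factor yields exactly the stated bound.

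The main obstacle will be the degenerate case: the division step is legitimate only when $\sum_{k=1}^n |a_k + b_k|^{\Lambda^\Psi} \neq 0$. I would handle the vanishing case separately, where every $a_k+b_k = 0$ and the inequality holds trivially with zero on the left. The only other bookkeeping point is to confirm that the common weight $\omega(\alpha_k,\beta_k)$ never interferes with the conjugacy arithmetic, which is automatic precisely because it was folded into $a_k$ and $b_k$ at the outset and carried through under the absolute values, so its sign or phase is immaterial throughout.
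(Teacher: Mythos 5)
Your proposal is correct and follows essentially the same route as the paper's own proof: the factorization $|a_k+b_k|^{\Lambda^\Psi}=|a_k+b_k|^{\Lambda^\Psi-1}|a_k+b_k|$, the triangle-inequality split into two sums, the application of the $\theta$-H\"older inequality with conjugate exponent $\frac{\Lambda^\Psi}{\Lambda^\Psi-1}$ exploiting $(\Lambda^\Psi-1)\cdot\frac{\Lambda^\Psi}{\Lambda^\Psi-1}=\Lambda^\Psi$, and the final division by the common factor. Your treatment is in fact slightly more careful than the paper's, since you explicitly dispose of the case $\Lambda^\Psi=1$ (where the conjugate exponent degenerates and H\"older cannot be invoked) and of the vanishing-sum case before dividing, both of which the paper passes over silently.
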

\begin{proof}
We pronounce
$$
\begin{aligned}
\sum_{k=1}^{n}\left|(\omega(\alpha_{k},\beta_{k}))x_{k}+(\omega(\alpha_{k},\beta_{k}))y_{k}\right|^{\Lambda^\Psi} & =\sum_{k=1}^{n}\left|(\omega(\alpha_{k},\beta_{k}))x_{k}+(\omega(\alpha_{k},\beta_{k}))y_{k}\right|^{\Lambda^\Psi-1}\left|(\omega(\alpha_{k},\beta_{k}))x_{k}+(\omega(\alpha_{k},\beta_{k}))y_{k}\right| \\
& \leq \sum_{k=1}^{n}\left|(\omega(\alpha_{k},\beta_{k}))x_{k}\right|\left|(\omega(\alpha_{k},\beta_{k}))x_{k}+(\omega(\alpha_{k},\beta_{k}))y_{k}\right|^{\Lambda^\Psi-1} \\
& \quad + \sum_{k=1}^{n}\left|(\omega(\alpha_{k},\beta_{k}))y_{k}\right|\left|(\omega(\alpha_{k},\beta_{k}))x_{k}+(\omega(\alpha_{k},\beta_{k}))y_{k}\right|^{\Lambda^\Psi-1}
\end{aligned}
$$
We coherently relegate,

$$
\sum_{k=1}^{n}\left|(\omega(\alpha_{k},\beta_{k}))x_{k}+(\omega(\alpha_{k},\beta_{k}))y_{k}\right|^{\Lambda^\Psi}
$$
$$
\leq\left[\left(\sum_{k=1}^{n}\left|(\omega(\alpha_{k},\beta_{k}))x_{k}\right|^{\Lambda^\Psi}\right)^{1 / \Lambda^\Psi}+\left(\sum_{k=1}^{n}\left|(\omega(\alpha_{k},\beta_{k}))y_{k}\right|^{\Lambda^\Psi}\right)^{1 / \Lambda^\Psi}\right]\left(\sum_{k=1}^{n}\left|(\omega(\alpha_{k},\beta_{k}))x_{k}+(\omega(\alpha_{k},\beta_{k}))y_{k}\right|^{(\Lambda^\Psi-1) \frac{\Lambda^\Psi}{\Lambda^\Psi - 1}}\right)^{1 / \frac{\Lambda^\Psi}{\Lambda^\Psi - 1}} .
$$
resurrecting,
$$
\sum_{k=1}^{n}\left|(\omega(\alpha_{k},\beta_{k}))x_{k}+(\omega(\alpha_{k},\beta_{k}))y_{k}\right|^{\Lambda^\Psi} 
$$
$$
\leq\left[\left(\sum_{k=1}^{n}\left|(\omega(\alpha_{k},\beta_{k}))x_{k}\right|^{\Lambda^\Psi}\right)^{1 / \Lambda^\Psi}+\left(\sum_{k=1}^{n}\left|(\omega(\alpha_{k},\beta_{k}))y_{k}\right|^{\Lambda^\Psi}\right)^{1 / \Lambda^\Psi}\right]\left(\sum_{k=1}^{n}\left|(\omega(\alpha_{k},\beta_{k}))x_{k}+(\omega(\alpha_{k},\beta_{k}))y_{k}\right|^{\Lambda^\Psi}\right)^{1 / \frac{\Lambda^\Psi}{\Lambda^\Psi - 1}},
$$

implying

$$
\left(\sum_{k=1}^{n}\left|(\omega(\alpha_{k},\beta_{k}))x_{k}+(\omega(\alpha_{k},\beta_{k}))y_{k}\right|^{\Lambda^\Psi}\right)^{1-\frac{1}{\frac{\Lambda^\Psi}{\Lambda^\Psi - 1}}} $$
$$
\leq\left(\sum_{k=1}^{n}\left|(\omega(\alpha_{k},\beta_{k}))x_{k}\right|^{\Lambda^\Psi}\right)^{1 / \Lambda^\Psi}+\left(\sum_{k=1}^{n}\left|(\omega(\alpha_{k},\beta_{k}))y_{k}\right|^{\Lambda^\Psi}\right)^{1 / \Lambda^\Psi}
$$

hence proving the  required theorem.
\end{proof}
\subsection{$\theta$-Lebesgue Sequence Spaces}
We promptly proceed from the $n$-dimensional $\ell_{\Lambda^\Psi}^{n}$ space into an infinite dimensional sequence space in a general fashion relegating the following body of mathematical support.
\begin{definition}
The $\theta$-Lebesgue sequence space (discrete $\theta$-Lebesgue space) with $1 \leq \Lambda^\Psi<\infty$, represented by $\ell_{\Lambda^\Psi}$ or by $\ell_{\Lambda^\Psi}(\mathbb{N})$, represents the set of all sequences of real numbers $\mathbf{x}=\left\{(\omega(\alpha_{n},\beta_{n}))x_{n}\right\}_{n \in \mathbb{N}}$ such that $\sum_{k=1}^{\infty}\left|(\omega(\alpha_{k},\beta_{k}))x_{k}\right|^{\Lambda^\Psi}<\infty$. Adjudicating the $\theta$-Lebesgue sequence space with the norm,
$$
\|\mathbf{x}\|_{\ell_{\Lambda^\Psi}}=\left\|\left\{(\omega(\alpha_{n},\beta_{n}))x_{n}\right\}_{n \in \mathbb{N}}\right\|_{\ell_{\Lambda^\Psi}}=\left(\sum_{k=1}^{\infty}\left|(\omega(\alpha_{k},\beta_{k}))x_{k}\right|^{\Lambda^\Psi}\right)^{1 / \Lambda^\Psi}
$$
where $\mathbf{x} \in \ell_{\Lambda^\Psi}$.
\end{definition}
We will designate by $\mathbb{R}^{\infty}$ the set of all sequences of real numbers $\mathbf{x}=\left\{(\omega(\alpha_{n},\beta_{n}))x_{n}\right\}_{n \in \mathbb{N}}$.
Let $\mathbf{x}$ and $\mathbf{y}$ be elements of $\ell_{\Lambda^\Psi}$ and $\alpha, \beta$ be real numbers. By Lemma 2.4 we have that
$$
\left(\sum_{k=1}^{n}\left|\alpha (\omega(\alpha_{k},\beta_{k}))x_{k}+\beta (\omega(\alpha_{k},\beta_{k}))y_{k}\right|^{\Lambda^\Psi}\right)^{1 / \Lambda^\Psi} \leq|\alpha|\left(\sum_{k=1}^{n}\left|(\omega(\alpha_{k},\beta_{k}))x_{k}\right|^{\Lambda^\Psi}\right)^{1 / \Lambda^\Psi}
$$+
$$|\beta|\left(\sum_{k=1}^{n}\left|(\omega(\alpha_{k},\beta_{k}))y_{k}\right|^{\Lambda^\Psi}\right)^{1 / \Lambda^\Psi}
$$
Considering limits, we achieve
$$
\left(\sum_{k=1}^{\infty}\left|\alpha (\omega(\alpha_{k},\beta_{k}))x_{k}+\beta (\omega(\alpha_{k},\beta_{k}))y_{k}\right|^{\Lambda^\Psi}\right)^{1 / \Lambda^\Psi} \leq|\alpha|\left(\sum_{k=1}^{\infty}\left|(\omega(\alpha_{k},\beta_{k}))x_{k}\right|^{\Lambda^\Psi}\right)^{1 / \Lambda^\Psi}+|\beta|\left(\sum_{k=1}^{\infty}\left|(\omega(\alpha_{k},\beta_{k}))y_{k}\right|^{\Lambda^\Psi}\right)^{1 / \Lambda^\Psi}
$$
and this demonstrate that $\alpha \mathbf{x}+\beta \mathbf{y}$ furnish an element of $\ell_{\Lambda^\Psi}$ and therefore $\ell_{\Lambda^\Psi}$ is a subspace of $\mathbb{R}^{\infty}$.
The $\theta$-Lebesgue sequence space $\ell_{\Lambda^\Psi}$ furnish a complete normed space for all $1 \leq \Lambda^\Psi \leq \infty$. We first prove for the case of finite exponent and for the case of $\Lambda^\Psi=\infty$ it will be demonstrated consequently.
\begin{theorem}
The space $\ell_{\Lambda^\Psi}(\mathbb{N})$ furnish a Banach space when $1 \leq \Lambda^\Psi<\infty$.
\end{theorem}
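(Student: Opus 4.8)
The plan is to establish the two defining ingredients of a Banach space: that $\|\cdot\|_{\ell_{\Lambda^\Psi}}$ is a genuine norm, and that the resulting normed space is complete. The norm axioms of positive definiteness and absolute homogeneity are immediate from the formula defining $\|\cdot\|_{\ell_{\Lambda^\Psi}}$, while the triangle inequality is precisely the $\theta$-Minkowski inequality passed to the limit $n \to \infty$, a passage already carried out in the discussion preceding the statement. Since the surrounding text has also verified that $\ell_{\Lambda^\Psi}$ is a subspace of $\mathbb{R}^\infty$, the substance of the argument reduces to completeness: every Cauchy sequence in $\ell_{\Lambda^\Psi}$ must converge to an element of $\ell_{\Lambda^\Psi}$.

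First I would fix a Cauchy sequence $\{\mathbf{x}^{(m)}\}_{m \in \mathbb{N}}$ of elements of $\ell_{\Lambda^\Psi}$, writing $\mathbf{x}^{(m)} = \{(\omega(\alpha_k,\beta_k))x_k^{(m)}\}_{k \in \mathbb{N}}$. The pivotal observation is that each individual coordinate is dominated by the full norm: for every fixed $k$,
$$
\left|(\omega(\alpha_k,\beta_k))x_k^{(m)} - (\omega(\alpha_k,\beta_k))x_k^{(l)}\right| \leq \|\mathbf{x}^{(m)} - \mathbf{x}^{(l)}\|_{\ell_{\Lambda^\Psi}}.
$$
Consequently, for each fixed $k$ the scalar sequence $\{(\omega(\alpha_k,\beta_k))x_k^{(m)}\}_m$ is Cauchy in $\mathbb{R}$ and, by completeness of $\mathbb{R}$, converges to a limit I denote $(\omega(\alpha_k,\beta_k))x_k$. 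Assembling these coordinatewise limits produces a candidate limit $\mathbf{x} = \{(\omega(\alpha_k,\beta_k))x_k\}_{k \in \mathbb{N}} \in \mathbb{R}^\infty$.

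Next I would promote coordinatewise convergence to norm convergence while simultaneously confirming membership. Given $\varepsilon > 0$, choose $N$ with $\|\mathbf{x}^{(m)} - \mathbf{x}^{(l)}\|_{\ell_{\Lambda^\Psi}} < \varepsilon$ for all $m, l \geq N$. For an arbitrary truncation length $n$ this gives
$$
\sum_{k=1}^{n} \left|(\omega(\alpha_k,\beta_k))x_k^{(m)} - (\omega(\alpha_k,\beta_k))x_k^{(l)}\right|^{\Lambda^\Psi} < \varepsilon^{\Lambda^\Psi}.
$$
Letting $l \to \infty$ in this finite sum yields $\sum_{k=1}^{n} |(\omega(\alpha_k,\beta_k))x_k^{(m)} - (\omega(\alpha_k,\beta_k))x_k|^{\Lambda^\Psi} \leq \varepsilon^{\Lambda^\Psi}$, and letting $n \to \infty$ then gives $\|\mathbf{x}^{(m)} - \mathbf{x}\|_{\ell_{\Lambda^\Psi}} \leq \varepsilon$ for all $m \geq N$. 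In particular $\mathbf{x}^{(m)} - \mathbf{x} \in \ell_{\Lambda^\Psi}$; since $\mathbf{x}^{(m)} \in \ell_{\Lambda^\Psi}$ and $\ell_{\Lambda^\Psi}$ is a vector space, $\mathbf{x} = \mathbf{x}^{(m)} - (\mathbf{x}^{(m)} - \mathbf{x}) \in \ell_{\Lambda^\Psi}$, and the same inequality records $\mathbf{x}^{(m)} \to \mathbf{x}$ in norm.

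The step deserving the most care, and hence the main obstacle, is the double limit in the final paragraph. The inequality is available only for finite partial sums, so the order of passage is essential: one must send $l \to \infty$ first, while the sum has only finitely many terms and continuity in those finitely many arguments applies, and only afterwards send $n \to \infty$, using that every partial sum is bounded by $\varepsilon^{\Lambda^\Psi}$ uniformly in $n$. Reversing this order, or attempting to interchange the limits directly on the infinite series, is exactly the pitfall to be avoided. The excluded case $\Lambda^\Psi = \infty$, governed instead by the supremum norm, is to be treated separately as the authors indicate.
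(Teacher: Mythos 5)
Your proposal is correct and follows essentially the same route as the paper: coordinatewise Cauchy estimates giving a candidate limit via completeness of $\mathbb{R}$, passage to the limit in finite partial sums, then letting the truncation length tend to infinity to obtain both membership and norm convergence. Your final step deducing $\mathbf{x} \in \ell_{\Lambda^\Psi}$ from vector-space closure is equivalent to the paper's explicit appeal to the $\theta$-Minkowski inequality, since that closure was itself established by Minkowski.
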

\begin{proof}
Let $\left\{\mathbf{x}_{n}\right\}_{n \in \mathbb{N}}$ be a Cauchy sequence in $\ell_{\Lambda^\Psi}(\mathbb{N})$, where we take the sequence $\mathbf{x}_{n}$ as $\mathbf{x}_{n}=\left((\omega(\alpha_{1},\beta_{1}))x_{1}^{(n)}, (\omega(\alpha_{2},\beta_{2}))x_{2}^{(n)}, \ldots\right)$. Then for any $\varepsilon>0$ there pertains an $n_{0} \in \mathbb{N}$ such that if $n, m \geq n_{0}$, implying $\left\|\mathbf{x}_{n}-\mathbf{x}_{m}\right\|_{\ell_{\Lambda^\Psi}}<\varepsilon$, i.e.
$$
\left(\sum_{j=1}^{\infty}\left|(\omega(\alpha_{j},\beta_{j}))x_{j}^{(n)}-(\omega(\alpha_{j},\beta_{j}))x_{j}^{(m)}\right|^{\Lambda^\Psi}\right)^{1 / \Lambda^\Psi}<\varepsilon
$$
whenever $n, m \geq n_{0}$. We can relegate that for all $j=1,2,3, \ldots$
$$
\left|(\omega(\alpha_{j},\beta_{j}))x_{j}^{(n)}-(\omega(\alpha_{j},\beta_{j}))x_{j}^{(m)}\right|<\varepsilon
$$
whenever $n, m \geq n_{0}$. Taking a fixed $j$ , it is perceived that $\left((\omega(\alpha_{j},\beta_{j}))x_{j}^{(1)}, (\omega(\alpha_{j},\beta_{j}))x_{j}^{(2)}, \ldots\right)$ furnish a Cauchy sequence in $\mathbb{R}$, therefore there pertains $(\omega(\alpha_{j},\beta_{j}))x_{j} \in \mathbb{R}$ such that $\lim _{m \rightarrow \infty} (\omega(\alpha_{j},\beta_{j}))x_{j}^{(m)}=(\omega(\alpha_{j},\beta_{j}))x_{j}$.
Let us demarcate $\mathbf{x}=\left((\omega(\alpha_{1},\beta_{1}))x_{1}, (\omega(\alpha_{2},\beta_{2}))x_{2}, \ldots\right)$ and demonstrate that $\mathbf{x}$ furnish in $\ell_{\Lambda^\Psi}$ and $\lim _{n \rightarrow \infty} \mathbf{x}_{n}=\mathbf{x}$.
Now, we discern that for all $n, m \geq n_{0}$
$$
\sum_{j=1}^{k}\left|(\omega(\alpha_{j},\beta_{j}))x_{j}^{(m)}-(\omega(\alpha_{j},\beta_{j}))x_{j}^{(n)}\right|^{\Lambda^\Psi}<\varepsilon^{\Lambda^\Psi}, \quad k=1,2,3, \ldots
$$
delegating,
$$
\sum_{j=1}^{k}\left|(\omega(\alpha_{j},\beta_{j}))x_{j}-(\omega(\alpha_{j},\beta_{j}))x_{j}^{(n)}\right|^{\Lambda^\Psi}=\sum_{j=1}^{k}\left|\lim _{m \rightarrow \infty} (\omega(\alpha_{j},\beta_{j}))x_{j}^{(m)}-(\omega(\alpha_{j},\beta_{j}))x_{j}^{(n)}\right|^{\Lambda^\Psi} \leq \varepsilon^{\Lambda^\Psi}
$$
whenever $n \geq n_{0}$, This demonstrate that $\mathbf{x}-\mathbf{x}_{n} \in \ell_{\Lambda^\Psi}$ and we also deduce that $\lim _{n \rightarrow \infty} \mathbf{x}_{n}=\mathbf{x}$. Finally in virtue of the $\theta$ -Minkowski inequality we have
$$
\begin{aligned}
\left(\sum_{j=1}^{\infty}\left|(\omega(\alpha_{j},\beta_{j}))x_{j}\right|^{\Lambda^\Psi}\right)^{1 / \Lambda^\Psi} & =\left(\sum_{j=1}^{\infty}\left|(\omega(\alpha_{j},\beta_{j}))x_{j}^{(n)}+(\omega(\alpha_{j},\beta_{j}))x_{j}-(\omega(\alpha_{j},\beta_{j}))x_{j}^{(n)}\right|^{\Lambda^\Psi}\right)^{1 / \Lambda^\Psi} \\
& \leq\left(\sum_{j=1}^{\infty}\left|(\omega(\alpha_{j},\beta_{j}))x_{j}^{(n)}\right|^{\Lambda^\Psi}\right)^{1 / \Lambda^\Psi}+\left(\sum_{j=1}^{\infty}\left|(\omega(\alpha_{j},\beta_{j}))x_{j}-(\omega(\alpha_{j},\beta_{j}))x_{j}^{(n)}\right|^{\Lambda^\Psi}\right)^{1 / \Lambda^\Psi},
\end{aligned}
$$
which demonstrate that $\mathbf{x}$ furnish in $\ell_{\Lambda^\Psi}(\mathbb{N})$ and this accomplishes the proof.
\end{proof}
The next mathematical consequence demonstrate that the $\theta$-Lebesgue sequence spaces are separable when the exponent $\Lambda^\Psi$ is finite, i.e., the space $\ell_{\Lambda^\Psi}$ admits an enumerable dense subset.
\begin{theorem}
The space $\ell_{\Lambda^\Psi}(\mathbb{N})$ is separable whenever $1 \leq \Lambda^\Psi<\infty$.
\end{theorem}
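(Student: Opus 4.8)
The plan is to exhibit an explicit countable dense subset of $\ell_{\Lambda^\Psi}(\mathbb{N})$, mirroring the classical argument for $\ell_p$ but invoking the $\theta$-Minkowski inequality to recombine the approximation errors. First I would introduce the candidate set $D$ consisting of all finitely supported sequences whose combined entries are rational, that is, sequences of the form $\mathbf{q} = ((\omega(\alpha_1,\beta_1))q_1, \ldots, (\omega(\alpha_N,\beta_N))q_N, 0, 0, \ldots)$ with $N \in \mathbb{N}$ and each combined entry in $\mathbb{Q}$. Up to the natural padding with zeros we have $D = \bigcup_{N \in \mathbb{N}} \mathbb{Q}^N$, a countable union of countable sets, hence countable; and every such sequence trivially lies in $\ell_{\Lambda^\Psi}(\mathbb{N})$ since it has only finitely many nonzero terms.

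Next I would establish that $D$ is dense. Fix an arbitrary $\mathbf{x} \in \ell_{\Lambda^\Psi}(\mathbb{N})$ and $\varepsilon > 0$. Because the defining series $\sum_{k=1}^\infty |(\omega(\alpha_k,\beta_k))x_k|^{\Lambda^\Psi}$ converges, its tail is controllable: there exists $N$ with $\sum_{k=N+1}^\infty |(\omega(\alpha_k,\beta_k))x_k|^{\Lambda^\Psi} < (\varepsilon/2)^{\Lambda^\Psi}$. For the first $N$ coordinates, density of $\mathbb{Q}$ in $\mathbb{R}$ allows me to pick rationals so that each discrepancy $|(\omega(\alpha_k,\beta_k))x_k - (\omega(\alpha_k,\beta_k))q_k|$ is as small as I wish; since only finitely many terms are involved I can arrange $\sum_{k=1}^N |(\omega(\alpha_k,\beta_k))x_k - (\omega(\alpha_k,\beta_k))q_k|^{\Lambda^\Psi} < (\varepsilon/2)^{\Lambda^\Psi}$.

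Finally I would assemble the estimate. Writing $\mathbf{q}$ for the finitely supported rational sequence just constructed, the $\theta$-Minkowski inequality bounds $\|\mathbf{x} - \mathbf{q}\|_{\ell_{\Lambda^\Psi}}$ by the sum of the finite-part error and the tail error, hence by $\varepsilon/2 + \varepsilon/2 = \varepsilon$. As $\mathbf{x}$ and $\varepsilon$ were arbitrary, $D$ is a countable dense subset and $\ell_{\Lambda^\Psi}(\mathbb{N})$ is separable.

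The main difficulty is not conceptual, since the functional is again a $\Lambda^\Psi$-power sum and the structure is identical to the classical $\ell_p$ case; rather it is the bookkeeping of splitting $\varepsilon$ between the convergent tail and the finite rational approximation and verifying that the two pieces recombine through the triangle inequality. I would emphasize that the hypothesis $\Lambda^\Psi < \infty$ is essential: the argument uses the vanishing of the tail, which is exactly what fails for $\Lambda^\Psi = \infty$, where $\ell_\infty$ is not separable (for instance the constant sequence $(1,1,1,\ldots)$ stays at distance $1$ from every finitely supported sequence).
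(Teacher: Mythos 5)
Your proposal is correct and takes essentially the same approach as the paper: the same countable set of finitely supported sequences with rational entries, and the same splitting of the error between the convergent tail and a rational approximation of the first $N$ coordinates. The only cosmetic difference is the final recombination: because the finite-part error and the tail occupy disjoint blocks of indices, the paper simply adds the two power sums exactly, $\|\mathbf{x}-\mathbf{q}\|_{\ell_{\Lambda^\Psi}}^{\Lambda^\Psi}=\sum_{k=1}^{N}|(\omega(\alpha_k,\beta_k))x_k-(\omega(\alpha_k,\beta_k))q_k|^{\Lambda^\Psi}+\sum_{k=N+1}^{\infty}|(\omega(\alpha_k,\beta_k))x_k|^{\Lambda^\Psi}<\varepsilon^{\Lambda^\Psi}$, so the $\theta$-Minkowski inequality you invoke is not actually needed.
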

\begin{proof}
 Let $M$ be the set of all sequences of the structure $\mathbf{\frac{\Lambda^\Psi}{\Lambda^\Psi - 1}}=\left(\frac{\Lambda^\Psi}{\Lambda^\Psi - 1}_{1}, \frac{\Lambda^\Psi}{\Lambda^\Psi - 1}_{2}, \ldots, \frac{\Lambda^\Psi}{\Lambda^\Psi - 1}_{n}, 0,0, \ldots\right)$ where $n \in \mathbb{N}$ and $\frac{\Lambda^\Psi}{\Lambda^\Psi - 1}_{k} \in \mathbb{Q}$. We will demonstrate that $M$ furnish a dense space in $\ell_{\Lambda^\Psi}$. Let $\mathbf{x}=\left\{(\omega(\alpha_{k},\beta_{k}))x_{k}\right\}_{k \in \mathbb{N}}$ be an arbitrary element of $\ell_{\Lambda^\Psi}$, implying for $\varepsilon>0$ there pertains $n$ which depends on $\varepsilon$ such that

$$
\sum_{k=n+1}^{\infty}\left|(\omega(\alpha_{k},\beta_{k}))x_{k}\right|^{\Lambda^\Psi}<\varepsilon^{\Lambda^\Psi} / 2
$$
Now, since $\overline{\mathbb{Q}}=\mathbb{R}$, we have that for each $(\omega(\alpha_{k},\beta_{k}))x_{k}$ there pertains a rational $\frac{\Lambda^\Psi}{\Lambda^\Psi - 1}_{k}$ such that
$$
\left|(\omega(\alpha_{k},\beta_{k}))x_{k}-\frac{\Lambda^\Psi}{\Lambda^\Psi - 1}_{k}\right|<\frac{\varepsilon}{\sqrt[\Lambda^\Psi]{2^{n}}}
$$
implying
$$
\sum_{k=1}^{n}\left|(\omega(\alpha_{k},\beta_{k}))x_{k}-\frac{\Lambda^\Psi}{\Lambda^\Psi - 1}_{k}\right|^{\Lambda^\Psi}<\varepsilon^{\Lambda^\Psi} / 2
$$
which conforms
$$
\|\mathbf{x}-\mathbf{\frac{\Lambda^\Psi}{\Lambda^\Psi - 1}}\|_{\ell_{\Lambda^\Psi}}^{\Lambda^\Psi}=\sum_{k=1}^{n}\left|(\omega(\alpha_{k},\beta_{k}))x_{k}-\frac{\Lambda^\Psi}{\Lambda^\Psi - 1}_{k}\right|^{\Lambda^\Psi}+\sum_{k=n+1}^{\infty}\left|(\omega(\alpha_{k},\beta_{k}))x_{k}\right|^{\Lambda^\Psi}<\varepsilon^{\Lambda^\Psi}
$$
and we arrive at $\|\mathbf{x}-\mathbf{\frac{\Lambda^\Psi}{\Lambda^\Psi - 1}}\|_{\ell_{\Lambda^\Psi}}<\varepsilon$. This demonstrate that $M$ is dense in $\ell_{\Lambda^\Psi}$, implying that $\ell_{\Lambda^\Psi}$ is separable since $M$ is enumerable.
\end{proof}
With the novelties of Schauder basis , we now investigate the problem of duality for the $\theta$-Lebesgue sequence space.
\begin{theorem}
Let $1<\Lambda^\Psi<\infty$. The dual space of $\ell_{\Lambda^\Psi}(\mathbb{N})$ is $\ell_{\frac{\Lambda^\Psi}{\Lambda^\Psi - 1}}(\mathbb{N})$ .
\end{theorem}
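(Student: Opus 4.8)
The plan is to exhibit a surjective linear isometry between $\ell_{\frac{\Lambda^\Psi}{\Lambda^\Psi-1}}(\mathbb{N})$ and the dual of $\ell_{\Lambda^\Psi}(\mathbb{N})$, thereby identifying the two spaces. Throughout I abbreviate $p=\Lambda^\Psi$ and $q=\frac{\Lambda^\Psi}{\Lambda^\Psi-1}$, so that $\frac{1}{p}+\frac{1}{q}=1$ and $(q-1)p=q$, and I write $\ell_p,\ell_q$ for $\ell_{\Lambda^\Psi}(\mathbb{N}),\ell_{\frac{\Lambda^\Psi}{\Lambda^\Psi-1}}(\mathbb{N})$. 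For a sequence $\mathbf{y}=\{\omega(\alpha_n,\beta_n)y_n\}_{n\in\mathbb{N}}\in\ell_q$ I define a functional $T\mathbf{y}$ on $\ell_p$ by the natural pairing
$$(T\mathbf{y})(\mathbf{x})=\sum_{k=1}^{\infty}\omega(\alpha_k,\beta_k)x_k\;\omega(\alpha_k,\beta_k)y_k .$$
The first step is to check that $T\mathbf{y}$ is well defined, linear and bounded: the $\theta$-H\"older inequality established above shows the series converges absolutely and gives $|(T\mathbf{y})(\mathbf{x})|\le\|\mathbf{x}\|_{\ell_{\Lambda^\Psi}}\|\mathbf{y}\|_{\ell_{\frac{\Lambda^\Psi}{\Lambda^\Psi-1}}}$, whence $\|T\mathbf{y}\|\le\|\mathbf{y}\|_{\ell_{\frac{\Lambda^\Psi}{\Lambda^\Psi-1}}}$. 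Thus $T:\ell_q\to\ell_p^{*}$ is a linear contraction.

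Next I would invoke the canonical Schauder basis $\{\mathbf{e}_k\}$ of $\ell_p$, where $\mathbf{e}_k$ is the sequence whose weighted $k$-th coordinate is $1$ and all others $0$; since $1\le p<\infty$, every $\mathbf{x}\in\ell_p$ is the norm limit of its partial sums, so continuity and linearity of an arbitrary $f\in\ell_p^{*}$ yield $f(\mathbf{x})=\sum_{k}\bigl(\omega(\alpha_k,\beta_k)x_k\bigr)\,f(\mathbf{e}_k)$. Hence $f$ is completely determined by the scalars $c_k:=f(\mathbf{e}_k)$. Injectivity of $T$ is then immediate, because $T\mathbf{y}=0$ forces $\omega(\alpha_k,\beta_k)y_k=(T\mathbf{y})(\mathbf{e}_k)=0$ for every $k$.

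The main obstacle is surjectivity together with the sharp norm bound: given $f\in\ell_p^{*}$ I must produce $\mathbf{y}\in\ell_q$ with $T\mathbf{y}=f$ and $\|\mathbf{y}\|_{\ell_{\frac{\Lambda^\Psi}{\Lambda^\Psi-1}}}\le\|f\|$. Choosing $\mathbf{y}$ so that its weighted $k$-th coordinate equals $c_k=f(\mathbf{e}_k)$, the crucial device is the family of truncated test vectors $\mathbf{x}^{(n)}$ whose weighted $k$-th coordinate is $|c_k|^{q-1}\operatorname{sgn}(c_k)$ for $k\le n$ and $0$ otherwise. Because $(q-1)p=q$, one computes $\|\mathbf{x}^{(n)}\|_{\ell_p}=\left(\sum_{k\le n}|c_k|^{q}\right)^{1/p}$ while $f(\mathbf{x}^{(n)})=\sum_{k\le n}|c_k|^{q}$. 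Substituting into $|f(\mathbf{x}^{(n)})|\le\|f\|\,\|\mathbf{x}^{(n)}\|_{\ell_p}$ and dividing by $\left(\sum_{k\le n}|c_k|^{q}\right)^{1/p}$ leaves $\left(\sum_{k\le n}|c_k|^{q}\right)^{1/q}\le\|f\|$; letting $n\to\infty$ shows $\mathbf{y}\in\ell_q$ with $\|\mathbf{y}\|_{\ell_{\frac{\Lambda^\Psi}{\Lambda^\Psi-1}}}\le\|f\|$. This truncation-and-division estimate is the heart of the argument, and the hypothesis $p>1$ (so that $q<\infty$ and the test vectors lie in $\ell_p$) is exactly what makes it run.

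Finally I would combine the two estimates. Since the basis representation gives $f=T\mathbf{y}$, the contraction bound $\|T\mathbf{y}\|\le\|\mathbf{y}\|_{\ell_{\frac{\Lambda^\Psi}{\Lambda^\Psi-1}}}$ from the first step and the reverse bound $\|\mathbf{y}\|_{\ell_{\frac{\Lambda^\Psi}{\Lambda^\Psi-1}}}\le\|f\|=\|T\mathbf{y}\|$ just obtained force $\|T\mathbf{y}\|=\|\mathbf{y}\|_{\ell_{\frac{\Lambda^\Psi}{\Lambda^\Psi-1}}}$. Therefore $T$ is a surjective linear isometry, i.e.\ an isometric isomorphism, and $\ell_{\Lambda^\Psi}(\mathbb{N})^{*}\cong\ell_{\frac{\Lambda^\Psi}{\Lambda^\Psi-1}}(\mathbb{N})$, as claimed.
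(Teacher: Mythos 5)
Your proposal is correct and follows essentially the same route as the paper's proof: expansion of $f\in(\ell_{\Lambda^\Psi})^{*}$ along the Schauder basis $\{e_k\}$, the truncated test vectors with weighted coordinates $|f(e_k)|^{\frac{\Lambda^\Psi}{\Lambda^\Psi-1}-1}\operatorname{sgn}(f(e_k))$ to force $\{f(e_k)\}$ into $\ell_{\frac{\Lambda^\Psi}{\Lambda^\Psi-1}}$, and the $\theta$-H\"older inequality for the reverse norm bound. The only difference is cosmetic: you define the map from $\ell_{\frac{\Lambda^\Psi}{\Lambda^\Psi-1}}$ into the dual, whereas the paper defines $T(f)=\{f(e_k)\}_{k\in\mathbb{N}}$ going the other way; the estimates are the same.
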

\begin{proof} A Schauder basis of $\ell_{\Lambda^\Psi}$ is $e_{k}=\left\{\delta_{k j}\right\}_{j \in \mathbb{N}}$ where $k \in \mathbb{N}$ and $\delta_{k j}$ represents the Kronecker delta, i.e., $\delta_{k j}=1$ if $k=j$ and 0 otherwise. If $f \in\left(\ell_{\Lambda^\Psi}\right)^{*}$, implying $f(\mathbf{x})=$ $\sum_{k \in \mathbb{N}} \alpha_{k} f\left(e_{k}\right), \mathbf{x}=\left\{\alpha_{k}\right\}_{k \in \mathbb{N}}$. We demarcate $T(f)=\left\{f\left(e_{k}\right)\right\}_{k \in \mathbb{N}}$. We want to demonstrate that the image of $T$ is in $\ell_{\frac{\Lambda^\Psi}{\Lambda^\Psi - 1}}$, for that we demarcate for each $n$, the sequence $\mathbf{x}^{n}=\left(\xi_{k}^{(n)}\right)_{k=1}^{\infty}$ with
$$
\xi_{k}^{(n)}= \begin{cases}\frac{\left|f\left(e_{k}\right)\right|^{\frac{\Lambda^\Psi}{\Lambda^\Psi - 1}}}{f\left(e_{k}\right)} & \text { if } \quad k \leq n \text { and } f\left(e_{k}\right) \neq 0 \\ 0 & \text { if } \quad k>n \text { or } f\left(e_{k}\right)=0\end{cases}
$$
Then
$$
f\left(\mathbf{x}^{n}\right)=\sum_{k \in \mathbb{N}} \xi_{k}^{(n)} f\left(e_{k}\right)=\sum_{k=1}^{n}\left|f\left(e_{k}\right)\right|^{\frac{\Lambda^\Psi}{\Lambda^\Psi - 1}}
$$
Moreover
$$
\begin{aligned}
f\left(\mathbf{x}^{n}\right) & \leq\|f\|\left\|\mathbf{x}^{n}\right\|_{\Lambda^\Psi} \\
& =\|f\|\left(\sum_{k=1}^{n}\left|\xi_{k}^{(n)}\right|^{\Lambda^\Psi}\right)^{\frac{1}{\Lambda^\Psi}} \\
& =\|f\|\left(\sum_{k=1}^{n}\left|f\left(e_{k}\right)\right|^{\frac{\Lambda^\Psi}{\Lambda^\Psi - 1} \Lambda^\Psi-\Lambda^\Psi}\right)^{\frac{1}{\Lambda^\Psi}} \\
& =\|f\|\left(\sum_{k=1}^{n}\left|f\left(e_{k}\right)\right|^{\frac{\Lambda^\Psi}{\Lambda^\Psi - 1}}\right)^{\frac{1}{\Lambda^\Psi}},
\end{aligned}
$$
from which
$$
\begin{aligned}
\left(\sum_{k=1}^{n}\left|f\left(e_{k}\right)\right|^{\frac{\Lambda^\Psi}{\Lambda^\Psi - 1}}\right)^{1-\frac{1}{\Lambda^\Psi}} & =\left(\sum_{k=1}^{n}\left|f\left(e_{k}\right)\right|^{\frac{\Lambda^\Psi}{\Lambda^\Psi - 1}}\right)^{\frac{1}{\frac{\Lambda^\Psi}{\Lambda^\Psi - 1}}} \\
& \leq\|f\| .
\end{aligned}
$$
Taking $n \rightarrow \infty$, we deduce
$$
\left(\sum_{k=1}^{\infty}\left|f\left(e_{k}\right)\right|^{\frac{\Lambda^\Psi}{\Lambda^\Psi - 1}}\right)^{\frac{1}{\frac{\Lambda^\Psi}{\Lambda^\Psi - 1}}} \leq\|f\|
$$
where $\left\{f\left(e_{k}\right)\right\}_{k \in \mathbb{N}} \in \ell_{\frac{\Lambda^\Psi}{\Lambda^\Psi - 1}}$.
Now, we articulate that:
(i) $T$ is onto. In effect given $b=\left(\beta_{k}\right)_{k \in \mathbb{N}} \in \ell_{\frac{\Lambda^\Psi}{\Lambda^\Psi - 1}}$, we can associate a bounded linear functional $g \in\left(\ell_{\Lambda^\Psi}\right)^{*}$, dispensed by $g(\mathbf{x})=\sum_{k=1}^{\infty} \alpha_{k} \beta_{k}$ with $\mathbf{x}=\left(\alpha_{k}\right)_{k \in \mathbb{N}} \in \ell_{\Lambda^\Psi}$ (the boundedness is deduced by $\theta$ -Hölder's inequality). Then $g \in\left(\ell_{\Lambda^\Psi}\right)^{*}$. The following statements can also be projected.
(ii) $T$  is bijective.
(iii) $T$ is an isometry. We see that the norm of $f$ is the $\ell_{\frac{\Lambda^\Psi}{\Lambda^\Psi - 1}}$ norm of $T f$
$$
\begin{aligned}
|f(\mathbf{x})| & =\left|\sum_{k \in \mathbb{N}} \alpha_{k} f\left(e_{k}\right)\right| \\
& \leq\left(\sum_{k \in \mathbb{N}}\left|\alpha_{k}\right|^{\Lambda^\Psi}\right)^{\frac{1}{\Lambda^\Psi}}\left(\sum_{k \in \mathbb{N}}\left|f\left(e_{k}\right)\right|^{\frac{\Lambda^\Psi}{\Lambda^\Psi - 1}}\right)^{\frac{1}{\frac{\Lambda^\Psi}{\Lambda^\Psi - 1}}} \\
& =\|x\|\left(\sum_{k \in \mathbb{N}}\left|f\left(e_{k}\right)\right|^{\frac{\Lambda^\Psi}{\Lambda^\Psi - 1}}\right)^{\frac{1}{\frac{\Lambda^\Psi}{\Lambda^\Psi - 1}}} .
\end{aligned}
$$
 the supremum over all $x$ of norm 1, we accomplish,
$$
\|f\| \leq\left(\sum_{k \in \mathbb{N}}\left|f\left(e_{k}\right)\right|^{\frac{\Lambda^\Psi}{\Lambda^\Psi - 1}}\right)^{\frac{1}{\frac{\Lambda^\Psi}{\Lambda^\Psi - 1}}}
$$
Since the other inequality is also true, we consummate the equality
$$
\|f\|=\left(\sum_{k \in \mathbb{N}}\left|f\left(e_{k}\right)\right|^{\frac{\Lambda^\Psi}{\Lambda^\Psi - 1}}\right)^{\frac{1}{\frac{\Lambda^\Psi}{\Lambda^\Psi - 1}}}
$$
with the desired isomorphism $f \rightarrow\left\{f\left(e_{k}\right)\right\}_{k \in \mathbb{N}}$ is established.
\end{proof}
The $\ell_{\Lambda^\Psi}$ spaces enunciate an embedding property, forming a nested sequence of $\theta$-Lebesgue sequences spaces.
\begin{theorem}
If $0<\Lambda^\Psi<\frac{\Lambda^\Psi}{\Lambda^\Psi - 1}<\infty$, implying $\ell_{\Lambda^\Psi}(\mathbb{N}) \subsetneq \ell_{\frac{\Lambda^\Psi}{\Lambda^\Psi - 1}}(\mathbb{N})$.
\end{theorem}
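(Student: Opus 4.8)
The plan is to recognize this as the classical monotone embedding $\ell_p \subseteq \ell_q$ for exponents $0 < p < q < \infty$, transcribed into the present notation by writing $p := \Lambda^\Psi$ and $q := \frac{\Lambda^\Psi}{\Lambda^\Psi - 1}$ and treating $a_k := (\omega(\alpha_k,\beta_k)) x_k$ as the effective $k$-th coordinate. First I would record what the hypothesis actually forces: for $\frac{\Lambda^\Psi}{\Lambda^\Psi - 1}$ to be finite and to exceed $\Lambda^\Psi$ while $\Lambda^\Psi > 0$, one needs $\Lambda^\Psi - 1 > 0$ together with $\Lambda^\Psi - 1 < 1$, i.e. $1 < \Lambda^\Psi < 2$, so that $q > 2 > p$. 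This sanity check guarantees $0 < p < q < \infty$, which is the only structural fact used below.

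The containment $\ell_p \subseteq \ell_q$ I would obtain from the norm domination $\|\mathbf{x}\|_{\ell_q} \le \|\mathbf{x}\|_{\ell_p}$. Take $\mathbf{x} \in \ell_p$ nonzero and normalize, setting $b_k := |a_k| / \|\mathbf{x}\|_{\ell_p}$ so that $\sum_k b_k^{\,p} = 1$. Since every term of a convergent sum of nonnegative numbers is bounded by the total, each $b_k \le 1$. Because $q > p$ and $0 \le b_k \le 1$, raising a number in $[0,1]$ to a larger power only decreases it, so $b_k^{\,q} \le b_k^{\,p}$ termwise; summing gives $\sum_k b_k^{\,q} \le \sum_k b_k^{\,p} = 1$. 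Undoing the normalization yields $\|\mathbf{x}\|_{\ell_q}^{\,q} \le \|\mathbf{x}\|_{\ell_p}^{\,q}$, hence $\|\mathbf{x}\|_{\ell_q} \le \|\mathbf{x}\|_{\ell_p} < \infty$, so $\mathbf{x} \in \ell_q$. This establishes the inclusion and, as a bonus, its continuity.

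For strictness I would exhibit a single sequence lying in $\ell_q \setminus \ell_p$. The canonical choice is the coordinate profile $a_k = k^{-1/\Lambda^\Psi}$: then $\sum_k |a_k|^{\,p} = \sum_k k^{-1}$ diverges, so the sequence is not in $\ell_p$, while $\sum_k |a_k|^{\,q} = \sum_k k^{-q/p}$ converges because $q/p > 1$, so it lies in $\ell_q$. The one point requiring care in this framework is realizability: I must choose scalars $x_k$ (and, if one insists on prescribing them, the parameters $\alpha_k,\beta_k$) so that the effective coordinates $\omega(\alpha_k,\beta_k) x_k$ equal the prescribed $k^{-1/\Lambda^\Psi}$; since the $x_k$ are free real parameters this is immediate whenever the weights $\omega(\alpha_k,\beta_k)$ are finite and nonzero, which is the implicit standing assumption.

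The step I expect to be the genuine, if mild, obstacle is not the analysis but the bookkeeping: confirming that the $q/p > 1$ needed for $p$-series convergence is exactly what the hypothesis $\Lambda^\Psi < \frac{\Lambda^\Psi}{\Lambda^\Psi - 1}$ supplies, and making sure the counterexample is consistent with the weighted coordinate structure $\omega(\alpha_k,\beta_k) x_k$ rather than with bare coordinates. Everything else reduces to the elementary monotonicity $t^{\,q} \le t^{\,p}$ for $t \in [0,1]$ and $q \ge p$.
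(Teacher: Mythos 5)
Your proof is correct, and it splits into the same two halves as the paper's: a containment argument plus the counterexample $a_k = k^{-1/\Lambda^\Psi}$, which is literally the same sequence the paper uses (harmonic series diverges at exponent $\Lambda^\Psi$, hyper-harmonic series converges at exponent $\frac{\Lambda^\Psi}{\Lambda^\Psi-1}$ since the ratio of exponents exceeds $1$). Where you genuinely diverge is the containment step. The paper argues by splitting the sequence into a finite head and a tail: since $\sum_n |a_n|^{\Lambda^\Psi} < \infty$, eventually $|a_n| < 1$, so $|a_n|^{\frac{\Lambda^\Psi}{\Lambda^\Psi-1}} \le |a_n|^{\Lambda^\Psi}$ on the tail, and the finitely many head terms are absorbed into a constant $M = \max\{|a_1|^{q-p}, \ldots, |a_{n_0}|^{q-p}, 1\}$, giving $\sum_n |a_n|^{q} \le M \sum_n |a_n|^{p} < \infty$. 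You instead normalize, setting $b_k = |a_k|/\|\mathbf{x}\|_{\ell_{\Lambda^\Psi}}$, observe $b_k \le 1$, and apply the same monotonicity $t^q \le t^p$ on $[0,1]$ to every term at once. Your route buys more: it yields the clean norm domination $\|\mathbf{x}\|_{\ell_{\frac{\Lambda^\Psi}{\Lambda^\Psi-1}}} \le \|\mathbf{x}\|_{\ell_{\Lambda^\Psi}}$, hence continuity of the embedding with constant $1$, whereas the paper's $M$ depends on the individual sequence and gives only set-theoretic inclusion. Your preliminary observation that the hypothesis, read literally with $q = \Lambda^\Psi/(\Lambda^\Psi-1)$, forces $1 < \Lambda^\Psi < 2$ is also a point the paper silently glosses over (it treats the two exponents as effectively independent symbols $p < q$), and your remark on realizability of the counterexample within the weighted coordinates $\omega(\alpha_k,\beta_k)x_k$ is handled the same way the paper implicitly handles it, by prescribing the product directly.
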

\begin{proof}
Let $\mathbf{x} \in \ell_{\Lambda^\Psi}$, implying $\sum_{n=1}^{\infty}\left|(\omega(\alpha_{n},\beta_{n}))x_{n}\right|^{\Lambda^\Psi}<\infty$. Therefore there pertains $n_{0} \in \mathbb{N}$ such that if $n \geq n_{0}$, implying $\left|(\omega(\alpha_{n},\beta_{n}))x_{n}\right|<1$. Now, since $0<\Lambda^\Psi<\frac{\Lambda^\Psi}{\Lambda^\Psi - 1}$, implying $0<\frac{\Lambda^\Psi}{\Lambda^\Psi - 1}-\Lambda^\Psi$ and $\left|(\omega(\alpha_{n},\beta_{n}))x_{n}\right|^{\frac{\Lambda^\Psi}{\Lambda^\Psi - 1}-\Lambda^\Psi}<1$ if $n>n_{0}$, by which $\left|(\omega(\alpha_{n},\beta_{n}))x_{n}\right|^{\frac{\Lambda^\Psi}{\Lambda^\Psi - 1}}<\left|(\omega(\alpha_{n},\beta_{n}))x_{n}\right|^{\Lambda^\Psi}$ if $n>n_{0}$. Let $M=\max \left\{\left|(\omega(\alpha_{1},\beta_{1}))x_{1}\right|^{\frac{\Lambda^\Psi}{\Lambda^\Psi - 1}-\Lambda^\Psi},\left|(\omega(\alpha_{2},\beta_{2}))x_{2}\right|^{\frac{\Lambda^\Psi}{\Lambda^\Psi - 1}-\Lambda^\Psi}, \ldots,\left|x_{n_{0}}\right|^{\frac{\Lambda^\Psi}{\Lambda^\Psi - 1}-\Lambda^\Psi}, 1\right\}$, implying
$$
\sum_{n=1}^{\infty}\left|(\omega(\alpha_{n},\beta_{n}))x_{n}\right|^{\frac{\Lambda^\Psi}{\Lambda^\Psi - 1}}=\sum_{n=1}^{\infty}\left|(\omega(\alpha_{n},\beta_{n}))x_{n}\right|^{\Lambda^\Psi}\left|(\omega(\alpha_{n},\beta_{n}))x_{n}\right|^{\frac{\Lambda^\Psi}{\Lambda^\Psi - 1}-\Lambda^\Psi}<M \sum_{n=1}^{\infty}\left|(\omega(\alpha_{n},\beta_{n}))x_{n}\right|^{\Lambda^\Psi}<+\infty
$$
implying that $\mathbf{x} \in \ell_{\frac{\Lambda^\Psi}{\Lambda^\Psi - 1}}$.
To demonstrate that $\ell_{\Lambda^\Psi}(\mathbb{N}) \neq \ell_{\frac{\Lambda^\Psi}{\Lambda^\Psi - 1}}(\mathbb{N})$, we take the following sequence $(\omega(\alpha_{n},\beta_{n}))x_{n}=n^{-1 / \Lambda^\Psi}$ for all $n \in \mathbb{N}$ with $1 \leq \Lambda^\Psi<\frac{\Lambda^\Psi}{\Lambda^\Psi - 1} \leq \infty$, and since $\Lambda^\Psi<\frac{\Lambda^\Psi}{\Lambda^\Psi - 1}$, implying $\frac{\frac{\Lambda^\Psi}{\Lambda^\Psi - 1}}{\Lambda^\Psi}>1$. Now we have
$$
\sum_{n=1}^{\infty}\left|(\omega(\alpha_{n},\beta_{n}))x_{n}\right|^{\frac{\Lambda^\Psi}{\Lambda^\Psi - 1}}=\sum_{n=1}^{\infty} \frac{1}{n^{\frac{\Lambda^\Psi}{\Lambda^\Psi - 1} / \Lambda^\Psi}}<\infty
$$
The last series is convergent since it comprises of a hyper-harmonic series with exponent bigger than 1, therefore $\mathbf{x} \in \ell_{\frac{\Lambda^\Psi}{\Lambda^\Psi - 1}}(\mathbb{N})$. On the other hand
$$
\sum_{n=1}^{\infty}\left|(\omega(\alpha_{n},\beta_{n}))x_{n}\right|^{\Lambda^\Psi}=\sum_{n=1}^{\infty} \frac{1}{n}
$$
and we deduce the harmonic series, which coherently concludes that $\mathbf{x} \notin \ell_{\Lambda^\Psi}(\mathbb{N})$.
\end{proof}
\subsection{Space of $\theta$-Bounded Sequences}
The space of bounded sequences, represented by $\ell_{\theta(\infty)}$ or sometimes $\ell_{\theta(\infty)}(\mathbb{N})$, is the accumulation of all real bounded sequences $\left\{(\omega(\alpha_{n},\beta_{n}))x_{n}\right\}_{n \in \mathbb{N}}$ ($\ell_{\theta(\infty)}$ is a vector space). Regarding this space as
$$
\|\mathbf{x}\|_{\infty}=\|\mathbf{x}\|_{\ell_{\theta(\infty)}}=\sup _{n \in \mathbb{N}}\left|(\omega(\alpha_{n},\beta_{n}))x_{n}\right|,
$$
where $\mathbf{x}=\left((\omega(\alpha_{1},\beta_{1}))x_{1}, (\omega(\alpha_{2},\beta_{2}))x_{2}, \ldots, (\omega(\alpha_{n},\beta_{n}))x_{n}, \ldots\right)$. 
We can project $\ell_{\theta(\infty)}$-space's property in  its completeness, relegating this property from the completeness of the real line.
\begin{theorem}
The space $\ell_{\theta(\infty)}$ comprises a Banach space.
\end{theorem}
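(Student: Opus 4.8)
The plan is to follow the classical argument establishing completeness of $\ell_\infty$, adapted to the weighted sup-norm structure of $\ell_{\theta(\infty)}$, and in spirit parallel to the finite-exponent Banach space proof given above but with the supremum replacing the summation. First I would take an arbitrary Cauchy sequence $\{\mathbf{x}_n\}_{n\in\mathbb{N}}$ in $\ell_{\theta(\infty)}$, writing $\mathbf{x}_n = ((\omega(\alpha_1,\beta_1))x_1^{(n)}, (\omega(\alpha_2,\beta_2))x_2^{(n)}, \ldots)$. The Cauchy condition furnishes, for every $\varepsilon > 0$, an index $n_0$ such that $\|\mathbf{x}_n - \mathbf{x}_m\|_{\ell_{\theta(\infty)}} < \varepsilon$ whenever $n, m \geq n_0$, which by the definition of the sup-norm reads $\sup_{j} |(\omega(\alpha_j,\beta_j))x_j^{(n)} - (\omega(\alpha_j,\beta_j))x_j^{(m)}| < \varepsilon$.

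Next I would extract a candidate limit coordinatewise. Since the supremum dominates each term, for every fixed $j$ we have $|(\omega(\alpha_j,\beta_j))x_j^{(n)} - (\omega(\alpha_j,\beta_j))x_j^{(m)}| < \varepsilon$ for $n, m \geq n_0$, so the scalar sequence $\{(\omega(\alpha_j,\beta_j))x_j^{(n)}\}_n$ is Cauchy in $\mathbb{R}$ and, by completeness of the real line, converges to a limit which I denote $(\omega(\alpha_j,\beta_j))x_j$. Assembling these coordinatewise limits defines a candidate $\mathbf{x} = ((\omega(\alpha_1,\beta_1))x_1, (\omega(\alpha_2,\beta_2))x_2, \ldots)$, and it remains to show both that $\mathbf{x} \in \ell_{\theta(\infty)}$ and that $\mathbf{x}_n \to \mathbf{x}$ in norm.

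The crucial step, and the one I expect to be the main subtlety, is exchanging the limit $n \to \infty$ with the supremum over $j$. Fixing $m \geq n_0$ and holding $j$ fixed, I would let $n \to \infty$ in the pointwise inequality $|(\omega(\alpha_j,\beta_j))x_j^{(m)} - (\omega(\alpha_j,\beta_j))x_j^{(n)}| \leq \varepsilon$, obtaining $|(\omega(\alpha_j,\beta_j))x_j^{(m)} - (\omega(\alpha_j,\beta_j))x_j| \leq \varepsilon$. Because this bound is uniform in $j$, taking the supremum over all $j$ preserves it, giving $\|\mathbf{x}_m - \mathbf{x}\|_{\ell_{\theta(\infty)}} \leq \varepsilon$ for every $m \geq n_0$. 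This single estimate accomplishes both remaining tasks at once: it shows $\mathbf{x} - \mathbf{x}_m$ is bounded, hence $\mathbf{x} = (\mathbf{x} - \mathbf{x}_m) + \mathbf{x}_m \in \ell_{\theta(\infty)}$ since $\mathbf{x}_m$ is bounded and the space is closed under addition, and it is precisely the statement $\lim_{m\to\infty}\mathbf{x}_m = \mathbf{x}$. The delicate point is that an a priori coordinatewise limit need not land inside the space; it is the uniformity of the Cauchy bound across all coordinates that rescues boundedness and prevents the limit from escaping to infinity. Having produced a limit in $\ell_{\theta(\infty)}$ for an arbitrary Cauchy sequence, completeness follows, and the space is a Banach space.
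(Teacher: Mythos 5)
Your proposal is correct and follows essentially the same route as the paper's proof: coordinatewise limits via completeness of $\mathbb{R}$, passing to the limit in the Cauchy estimate uniformly in $j$, and then using the uniform bound together with boundedness of a fixed $\mathbf{x}_m$ (the paper does this via the triangle inequality with a bound $M_n$, you via closure under addition — the same argument) to conclude both membership in $\ell_{\theta(\infty)}$ and norm convergence.
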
 
\begin{proof}
Let $\left\{\mathbf{x}_{n}\right\}_{n \in \mathbb{N}}$ be a Cauchy sequence in $\ell_{\theta(\infty)}$, where $\mathbf{x}_{n}=\left((\omega(\alpha_{1},\beta_{1}))x_{1}^{(n)}, (\omega(\alpha_{2},\beta_{2}))x_{2}^{(n)}, \ldots\right)$. Then for any $\varepsilon>0$ there pertains $n_{0}>0$ such that if $m, n \geq n_{0}$ implying
$$
\left\|\mathbf{x}_{m}-\mathbf{x}_{n}\right\|_{\infty}<\varepsilon .
$$
Therefore for fixed $j$ we articulate that if $m, n \geq n_{0}$, accomplishing
$$
\left|(\omega(\alpha_{j},\beta_{j}))x_{j}^{(m)}-(\omega(\alpha_{j},\beta_{j}))x_{j}^{(n)}\right|<\varepsilon
$$
resulting that for all fixed $j$ the sequence $\left((\omega(\alpha_{j},\beta_{j}))x_{j}^{(1)}, (\omega(\alpha_{j},\beta_{j}))x_{j}^{(2)}, \ldots\right)$ is a Cauchy sequence in $\mathbb{R}$, and this implies that there pertains $(\omega(\alpha_{j},\beta_{j}))x_{j} \in \mathbb{R}$ \\
such that $\lim _{m \rightarrow \infty} (\omega(\alpha_{j},\beta_{j}))x_{j}^{(m)}=(\omega(\alpha_{j},\beta_{j}))x_{j}$.
Let us demarcate $\mathbf{x}=\left((\omega(\alpha_{1},\beta_{1}))x_{1}, (\omega(\alpha_{2},\beta_{2}))x_{2}, \ldots\right)$. Now we want to demonstrate that $\mathbf{x} \in \ell_{\theta(\infty)}$ and $\lim _{n \rightarrow \infty} \mathbf{x}_{n}=\mathbf{x}$.
Now we project for $n \geq n_{0}$, implying
$$
\left|(\omega(\alpha_{j},\beta_{j}))x_{j}-(\omega(\alpha_{j},\beta_{j}))x_{j}^{(n)}\right|=\left|\lim _{n \rightarrow \infty} (\omega(\alpha_{j},\beta_{j}))x_{j}^{(m)}-(\omega(\alpha_{j},\beta_{j}))x_{j}^{(n)}\right| \leq \varepsilon
$$
since $\mathbf{x}_{n}=\left\{(\omega(\alpha_{j},\beta_{j}))x_{j}^{(n)}\right\}_{j \in \mathbb{N}} \in \ell_{\theta(\infty)}$, there pertains a real number $M_{n}$ such that $\left|(\omega(\alpha_{j},\beta_{j}))x_{j}^{(n)}\right| \leq M_{n}$ for all $j$.
Relegating the triangle inequality, we have
$$
\left|(\omega(\alpha_{j},\beta_{j}))x_{j}\right| \leq\left|(\omega(\alpha_{j},\beta_{j}))x_{j}-(\omega(\alpha_{j},\beta_{j}))x_{j}^{(n)}\right|+\left|(\omega(\alpha_{j},\beta_{j}))x_{j}^{(n)}\right| \leq \varepsilon+M_{n}
$$
whenever $n \geq n_{0}$, this inequality being true for any $j$. Moreover, since the right-hand side does not depend on $j$, therefore $\left\{(\omega(\alpha_{j},\beta_{j}))x_{j}\right\}_{j \in \mathbb{N}}$ is a sequence of bounded real numbers, this implies that $\mathbf{x}=\left\{(\omega(\alpha_{j},\beta_{j}))x_{j}\right\}_{j \in \mathbb{N}} \in \ell_{\theta(\infty)}$.
From above we can also deduce
$$
\left\|\mathbf{x}_{n}-\mathbf{x}\right\|_{\ell_{\theta(\infty)}}=\sup _{j \in \mathbb{N}}\left|(\omega(\alpha_{j},\beta_{j}))x_{j}^{(n)}-(\omega(\alpha_{j},\beta_{j}))x_{j}\right|<\varepsilon
$$
whenever $n \geq n_{0}$ concluding that $\lim _{n \rightarrow \infty} \mathbf{x}_{n}=\mathbf{x}$ and therefore $\ell_{\theta(\infty)}$ is complete.
\end{proof}
The following mathematical consequence demonstrate a general procedure to acquaint the norm in the $\ell_{\theta(\infty)}$ space via a limiting process.
\begin{theorem}
Taking the norm of $\theta$-Lebesgue sequence space as  we have that $\lim _{\Lambda^\Psi \rightarrow \infty}\|\mathbf{x}\|_{\ell_{\Lambda^\Psi}}=\|\mathbf{x}\|_{\ell_{\theta(\infty)}}$.
\end{theorem}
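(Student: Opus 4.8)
The plan is to prove the classical sandwich estimates $\|\mathbf{x}\|_{\ell_{\theta(\infty)}} \le \liminf_{\Lambda^\Psi\to\infty}\|\mathbf{x}\|_{\ell_{\Lambda^\Psi}}$ and $\limsup_{\Lambda^\Psi\to\infty}\|\mathbf{x}\|_{\ell_{\Lambda^\Psi}} \le \|\mathbf{x}\|_{\ell_{\theta(\infty)}}$, which together force the limit to exist and to equal the supremum norm. Throughout I abbreviate $a_k = |(\omega(\alpha_k,\beta_k))x_k|$ and $M = \|\mathbf{x}\|_{\ell_{\theta(\infty)}} = \sup_k a_k$. The case $M=0$ forces $\mathbf{x}=\mathbf{0}$ and is trivial, so I assume $M>0$. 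Note also that, because $\theta$ is strictly monotonic increasing with range inside $(1,\infty)$, letting the underlying variable grow is equivalent to letting the exponent $\Lambda^\Psi\to\infty$, so the limit in the statement is the genuine large-exponent limit.

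For the lower bound I would fix an arbitrary index $j$ and observe that, by discarding the remaining non-negative terms of the sum, $\|\mathbf{x}\|_{\ell_{\Lambda^\Psi}} = \left(\sum_k a_k^{\Lambda^\Psi}\right)^{1/\Lambda^\Psi} \ge \left(a_j^{\Lambda^\Psi}\right)^{1/\Lambda^\Psi} = a_j$. Since this holds for every admissible exponent, I obtain $\liminf_{\Lambda^\Psi\to\infty}\|\mathbf{x}\|_{\ell_{\Lambda^\Psi}} \ge a_j$, and taking the supremum over $j$ yields $\liminf_{\Lambda^\Psi\to\infty}\|\mathbf{x}\|_{\ell_{\Lambda^\Psi}} \ge M$.

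For the upper bound I would use a fixed finite reference exponent $\Lambda_0<\infty$ at which $\mathbf{x}$ already lies in $\ell_{\Lambda_0}$; by the embedding theorem proved above, membership at one finite exponent propagates to all larger exponents, so the sums below are finite for $\Lambda^\Psi>\Lambda_0$. I split each summand as $a_k^{\Lambda^\Psi} = a_k^{\Lambda^\Psi-\Lambda_0}\,a_k^{\Lambda_0} \le M^{\Lambda^\Psi-\Lambda_0}\,a_k^{\Lambda_0}$, valid because $a_k \le M$ and $\Lambda^\Psi-\Lambda_0>0$. Summing over $k$ gives $\sum_k a_k^{\Lambda^\Psi} \le M^{\Lambda^\Psi-\Lambda_0}\,\|\mathbf{x}\|_{\ell_{\Lambda_0}}^{\Lambda_0}$, hence $\|\mathbf{x}\|_{\ell_{\Lambda^\Psi}} \le M^{1-\Lambda_0/\Lambda^\Psi}\,\|\mathbf{x}\|_{\ell_{\Lambda_0}}^{\Lambda_0/\Lambda^\Psi}$. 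Letting $\Lambda^\Psi\to\infty$ I have $M^{1-\Lambda_0/\Lambda^\Psi}\to M$ and $\|\mathbf{x}\|_{\ell_{\Lambda_0}}^{\Lambda_0/\Lambda^\Psi}\to 1$, so $\limsup_{\Lambda^\Psi\to\infty}\|\mathbf{x}\|_{\ell_{\Lambda^\Psi}}\le M$.

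Combining the two bounds collapses $\liminf$ and $\limsup$ to the common value $M=\|\mathbf{x}\|_{\ell_{\theta(\infty)}}$, establishing the claimed limit. The main obstacle I anticipate lies entirely in the upper bound for a genuinely infinite sequence: one cannot crudely estimate $\sum_k a_k^{\Lambda^\Psi}$ by (number of terms)$\cdot M^{\Lambda^\Psi}$ as in the finite-dimensional case, so the interpolation split against a finite reference exponent $\Lambda_0$ is the essential device. The delicate point is therefore to guarantee that such a valid finite $\Lambda_0$ exists for the $\mathbf{x}$ under consideration, which the embedding (nesting) property supplies.
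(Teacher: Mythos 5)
Your proposal is correct, and it shares the paper's overall skeleton: the two-sided sandwich in which the lower estimate $\|\mathbf{x}\|_{\ell_{\theta(\infty)}} \le \liminf_{\Lambda^\Psi\to\infty}\|\mathbf{x}\|_{\ell_{\Lambda^\Psi}}$ is obtained, exactly as in the paper, by retaining a single term of the sum and then taking a supremum over the index. Where you genuinely differ is the upper bound. Writing $a_k=|(\omega(\alpha_k,\beta_k))x_k|$ and $M=\|\mathbf{x}\|_{\ell_{\theta(\infty)}}$, the paper truncates: for $\varepsilon>0$ it picks $N$ so that the tail of $\sum_k a_k^{\Lambda^\Psi}$ is below $\varepsilon$, bounds the head by $N\,M^{\Lambda^\Psi}$, and uses $N^{1/\Lambda^\Psi}\to 1$. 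You instead interpolate against a fixed finite reference exponent $\Lambda_0$ with $\mathbf{x}\in\ell_{\Lambda_0}$, via $a_k^{\Lambda^\Psi}\le M^{\Lambda^\Psi-\Lambda_0}a_k^{\Lambda_0}$, yielding $\|\mathbf{x}\|_{\ell_{\Lambda^\Psi}}\le M^{1-\Lambda_0/\Lambda^\Psi}\|\mathbf{x}\|_{\ell_{\Lambda_0}}^{\Lambda_0/\Lambda^\Psi}\to M$. Your device buys rigor that the paper's argument glosses over: in the paper the admissible $N$ depends on $\Lambda^\Psi$ (the tail condition $\sum_{k>N}a_k^{\Lambda^\Psi}<\varepsilon$ is exponent-dependent), so letting $\Lambda^\Psi\to\infty$ at fixed $N$ requires a uniformity argument that is never supplied, whereas your inequality holds verbatim for every $\Lambda^\Psi>\Lambda_0$; moreover the paper's final bound $M\left(N+\varepsilon M^{-\Lambda^\Psi}\right)^{1/\Lambda^\Psi}$ only tends to $M$ when $M\ge 1$, since for $M<1$ the term $\varepsilon M^{-\Lambda^\Psi}$ blows up and the bound degenerates to $\limsup\le 1$, a pathology your estimate avoids entirely. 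Finally, you make explicit a hypothesis that both proofs need but the theorem statement omits: $\mathbf{x}$ must lie in some $\ell_{\Lambda_0}$ with $\Lambda_0<\infty$ (for the constant sequence $(1,1,1,\ldots)$ every finite-exponent norm is infinite while the sup norm is $1$, so the claimed limit fails without this), and you correctly invoke the nesting theorem to propagate membership to all larger exponents.
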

\begin{proof}
Observe that $\left|(\omega(\alpha_{k},\beta_{k}))x_{k}\right| \leq\left(\sum_{k=1}^{n}\left|(\omega(\alpha_{k},\beta_{k}))x_{k}\right|^{\Lambda^\Psi}\right)^{\frac{1}{\Lambda^\Psi}}$, therefore $\left|(\omega(\alpha_{k},\beta_{k}))x_{k}\right| \leq\|\mathbf{x}\|_{\ell_{\Lambda^\Psi}}$ for $k=1,2,3, \ldots, n$, from which
$$
\sup _{1 \leq k \leq n}\left|(\omega(\alpha_{k},\beta_{k}))x_{k}\right| \leq\|\mathbf{x}\|_{\ell_{\Lambda^\Psi}}
$$
whence
$$
\|\mathbf{x}\|_{\ell_{\theta(\infty)}} \leq \liminf _{\Lambda^\Psi \rightarrow \infty}\|\mathbf{x}\|_{\ell_{\Lambda^\Psi}}
$$
On the other hand, note that
$$
\left(\sum_{k=1}^{n}\left|(\omega(\alpha_{k},\beta_{k}))x_{k}\right|^{\Lambda^\Psi}\right)^{\frac{1}{\Lambda^\Psi}} \leq\left(\sum_{k=1}^{n}\left(\sup _{1 \leq k \leq n}\left|(\omega(\alpha_{k},\beta_{k}))x_{k}\right|\right)^{\Lambda^\Psi}\right)^{\frac{1}{\Lambda^\Psi}} \leq n^{\frac{1}{\Lambda^\Psi}}\|\mathbf{x}\|_{\ell_{\theta(\infty)}}
$$
implying for all $\varepsilon>0$, there pertains $N$ such that
$$
\|\mathbf{x}\|_{\ell_{\Lambda^\Psi}} \leq\left(\sum_{k=1}^{N}\left|(\omega(\alpha_{k},\beta_{k}))x_{k}\right|^{\Lambda^\Psi}+\varepsilon\right)^{\frac{1}{\Lambda^\Psi}} \leq\left(\|\mathbf{x}\|_{\ell_{\theta(\infty)}}^{\Lambda^\Psi} N+\varepsilon\right)^{\frac{1}{\Lambda^\Psi}} \leq\|\mathbf{x}\|_{\ell_{\theta(\infty)}}\left(N+\frac{\varepsilon}{\|\mathbf{x}\|_{\ell_{\theta(\infty)}}^{\Lambda^\Psi}}\right)^{\frac{1}{\Lambda^\Psi}}
$$
therefore
$$
\limsup _{\Lambda^\Psi \rightarrow \infty}\|\mathbf{x}\|_{\ell_{\Lambda^\Psi}} \leq\|\mathbf{x}\|_{\ell_{\theta(\infty)}} \text {. }
$$
Combining the above 2 equation results
$$
\|\mathbf{x}\|_{\ell_{\theta(\infty)}} \leq \liminf _{\Lambda^\Psi \rightarrow \infty}\|\mathbf{x}\|_{\ell_{\Lambda^\Psi}} \leq \limsup _{\Lambda^\Psi \rightarrow \infty}\|\mathbf{x}\|_{\ell_{\Lambda^\Psi}} \leq\|\mathbf{x}\|_{\ell_{\theta(\infty)}}
$$
and from this we conclude that $\lim _{\Lambda^\Psi \rightarrow \infty}\|\mathbf{x}\|_{\ell_{\Lambda^\Psi}}=\|\mathbf{x}\|_{\ell_{\theta(\infty)}}$.
\end{proof}

\begin{theorem}
The dual space of $\ell_{\theta(1)}$ is $\ell_{\theta(\infty)}$.
\end{theorem}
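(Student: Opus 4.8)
The plan is to mirror the duality argument of Theorem 5, specializing to the endpoint exponent $\Lambda^\Psi = 1$ with conjugate exponent $\infty$, and to exploit the same Schauder basis $e_{k}=\{\delta_{kj}\}_{j \in \mathbb{N}}$ of $\ell_{\theta(1)}$. For $\mathbf{x}=\{(\omega(\alpha_{k},\beta_{k}))x_{k}\}_{k \in \mathbb{N}} \in \ell_{\theta(1)}$ the partial sums $\sum_{k=1}^{n}(\omega(\alpha_{k},\beta_{k}))x_{k}\,e_{k}$ converge to $\mathbf{x}$ in the $\ell_{\theta(1)}$ norm, since the tail $\sum_{k=n+1}^{\infty}|(\omega(\alpha_{k},\beta_{k}))x_{k}|$ vanishes as $n \to \infty$; hence every $f \in (\ell_{\theta(1)})^{*}$ satisfies $f(\mathbf{x})=\sum_{k \in \mathbb{N}}(\omega(\alpha_{k},\beta_{k}))x_{k}\,f(e_{k})$ by continuity. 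As before I would introduce the map $T(f)=\{f(e_{k})\}_{k \in \mathbb{N}}$ and show that it is an isometric isomorphism onto $\ell_{\theta(\infty)}$.

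The first step is to verify $T(f) \in \ell_{\theta(\infty)}$ with $\|T(f)\|_{\ell_{\theta(\infty)}} \leq \|f\|$. Since $\|e_{k}\|_{\ell_{\theta(1)}}=1$ for every $k$, we obtain $|f(e_{k})| \leq \|f\|\,\|e_{k}\|_{\ell_{\theta(1)}}=\|f\|$, and taking the supremum over $k$ gives the desired bound; in particular $\{f(e_{k})\}_{k \in \mathbb{N}}$ is a bounded sequence, so it lies in $\ell_{\theta(\infty)}$. This replaces the auxiliary test sequence $\xi_{k}^{(n)}$ used in Theorem 5, which is no longer needed at the endpoint.

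The second step is surjectivity together with the reverse norm inequality. Given $b=\{b_{k}\}_{k \in \mathbb{N}} \in \ell_{\theta(\infty)}$, define $g(\mathbf{x})=\sum_{k=1}^{\infty}(\omega(\alpha_{k},\beta_{k}))x_{k}\,b_{k}$; the endpoint $\theta$-Hölder pairing between $\ell_{\theta(1)}$ and $\ell_{\theta(\infty)}$ yields $|g(\mathbf{x})| \leq (\sup_{k}|b_{k}|)\sum_{k}|(\omega(\alpha_{k},\beta_{k}))x_{k}| = \|b\|_{\ell_{\theta(\infty)}}\,\|\mathbf{x}\|_{\ell_{\theta(1)}}$, so $g \in (\ell_{\theta(1)})^{*}$ with $\|g\| \leq \|b\|_{\ell_{\theta(\infty)}}$ and $T(g)=b$, establishing that $T$ is onto. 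Linearity and injectivity of $T$ are immediate from the basis expansion. Finally, the estimate $|f(\mathbf{x})| \leq \sup_{k}|f(e_{k})|\sum_{k}|(\omega(\alpha_{k},\beta_{k}))x_{k}| = \|T(f)\|_{\ell_{\theta(\infty)}}\,\|\mathbf{x}\|_{\ell_{\theta(1)}}$ gives $\|f\| \leq \|T(f)\|_{\ell_{\theta(\infty)}}$, which combined with Step 1 proves $\|f\|=\|T(f)\|_{\ell_{\theta(\infty)}}$, so $T$ is an isometry and the identification $(\ell_{\theta(1)})^{*}=\ell_{\theta(\infty)}$ follows.

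The one point genuinely distinct from the $1<\Lambda^\Psi<\infty$ case, and hence the main obstacle, is justifying that $\{e_{k}\}$ is an honest Schauder basis of $\ell_{\theta(1)}$, so that $f(\mathbf{x})=\sum_{k}(\omega(\alpha_{k},\beta_{k}))x_{k}\,f(e_{k})$ holds on all of $\ell_{\theta(1)}$ and not merely on finitely supported sequences. This rests on the norm-density of the finitely supported sequences, which I would deduce from the convergence of $\sum_{k}|(\omega(\alpha_{k},\beta_{k}))x_{k}|$ and the continuity of $f$; once this interchange is secured, every remaining estimate is the sharp endpoint version of the pairing and requires no further refinement.
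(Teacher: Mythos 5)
Your proposal is correct and follows essentially the same route as the paper's own proof: the Schauder basis $e_{k}=\{\delta_{kj}\}_{j\in\mathbb{N}}$ of $\ell_{\theta(1)}$, the map $T(f)=\{f(e_{k})\}_{k\in\mathbb{N}}$ bounded by $\|e_{k}\|_{\ell_{\theta(1)}}=1$, surjectivity via the pairing $g(\mathbf{x})=\sum_{k}\alpha_{k}\beta_{k}$ with the endpoint H\"older-type estimate, and the two-sided inequality yielding the isometry. The only cosmetic difference is that you make explicit the density argument behind the basis expansion, which the paper also records (via the vanishing tail $\|\sum_{k=n+1}^{\infty}\alpha_{k}e_{k}\|_{\ell_{1}}\to 0$) at the start of its proof.
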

\begin{proof}
For all $\mathbf{x} \in \ell_{1}$, we can express $\mathbf{x}=\sum_{k=1}^{\infty} \alpha_{k} e_{k}$, where $e_{k}=\left(\delta_{k j}\right)_{j=1}^{\infty}$ forms a Schauder basis in $\ell_{1}$, since
$$
\mathbf{x}-\sum_{k=1}^{n} \alpha_{k} e_{k}=(\underbrace{0, \ldots, 0}_{n}, \alpha_{n+1}, \ldots)
$$
and
$$
\left\|\mathbf{x}-\sum_{k=1}^{n} \alpha_{k} e_{k}\right\|_{\ell_{1}}=\left\|\sum_{k=n+1}^{\infty} \alpha_{k} e_{k}\right\|_{\ell_{1}} \rightarrow 0
$$
since the series $\sum_{k=1}^{\infty} \alpha_{k} e_{k}$ is convergent.
Let us demarcate $T(f)=\left\{f\left(e_{k}\right)\right\}_{k \in \mathbb{N}}$, for all $f \in\left(\ell_{1}\right)^{*}$. Since $f(\mathbf{x})=\sum_{k \in \mathbb{N}} \alpha_{k} f\left(e_{k}\right)$, implying $\mid f\left(e_{k}\right) \leq\|f\|$, since $\left\|e_{k}\right\|_{\ell_{1}}=1$. In consequence, $\sup _{k \in \mathbb{N}}\left|f\left(e_{k}\right)\right| \leq\|f\|$, therefore $\left\{f\left(e_{k}\right)\right\}_{k \in \mathbb{N}} \in \ell_{\theta(\infty)}$.
it comprises of adjudicated that:
(i) $T$ is onto. In fact, for all $b=\left\{\beta_{k}\right\}_{k \in \mathbb{N}} \in \ell_{\theta(\infty)}$, let us demarcate $\frac{\Lambda^\Psi}{\Lambda^\Psi - 1}: \ell_{1} \rightarrow \mathbb{R}$ as $g(\mathbf{x})=$ $\sum_{k \in \mathbb{N}} \alpha_{k} \beta_{k}$ if $\mathbf{x}=\left\{\alpha_{k}\right\}_{k \in \mathbb{N}} \in \ell_{\theta(\infty)}$. The functional $g$ is bounded and linear since
$$
|g(\mathbf{x})| \leq \sum_{k \in \mathbb{N}}\left|\alpha_{k} \beta_{k}\right| \leq \sup _{k \in \mathbb{N}}\left|\beta_{k}\right| \sum_{k \in \mathbb{N}}\left|\alpha_{k}\right|=\|\mathbf{x}\|_{\ell_{1}} \cdot \sup _{k \in \mathbb{N}}\left|\beta_{k}\right|
$$
implying $g \in\left(\ell_{1}\right)^{*}$.  Providing $g\left(e_{k}\right)=\sum_{j \in \mathbb{N}} \delta_{k j} \beta_{j}$,
$$
T(g)=\left\{g\left(e_{k}\right)\right\}_{k \in \mathbb{N}}=\left\{\beta_{k}\right\}_{k \in \mathbb{N}}=b
$$
(ii) $T$ is 1-1. If $T f_{1}=T f_{2}$, implying $f_{1}\left(e_{k}\right)=f_{2}\left(e_{k}\right)$, for all $k$. Since we have $f_{1}(\mathbf{x})=\sum_{k \in \mathbb{N}} \alpha_{k} f_{1}\left(e_{k}\right)$ and $f_{2}(\mathbf{x})=\sum_{k \in \mathbb{N}} \alpha_{k} f_{2}\left(e_{k}\right)$, implying $f_{1}=f_{2}$.
(iii) $T$ is an isometry. In fact,
$$
\|T f\|_{\infty}=\sup _{k \in \mathbb{N}}\left|f\left(e_{k}\right)\right| \leq\|f\|
$$
and
$$
|f(\mathbf{x})|=\left|\sum_{k \in \mathbb{N}} \alpha_{k} f\left(e_{k}\right)\right| \leq \sup _{j \in \mathbb{N}}\left|f\left(e_{k}\right)\right| \sum_{k \in \mathbb{N}}\left|\alpha_{k}\right|=\|\mathbf{x}\|_{\ell_{1}} \sup _{k \in \mathbb{N}}\left|f\left(e_{k}\right)\right|
$$
Then
$$
\|f\| \leq \sup _{k \in \mathbb{N}}\left|f\left(e_{k}\right)\right|=\|T f\|_{\infty} .
$$
Combining  the immediate above two equations  $\|T f\|_{\infty}=\|f\|$  is deduced. We thus showed that the spaces $\left(\ell_{1}\right)^{*}$ and $\ell_{\theta(\infty)}$ are isometric.
\end{proof}
The space of bounded sequence $\ell_{\theta(\infty)}$ is not separable, contrasting with the separability of the $\ell_{\Lambda^\Psi}$ spaces whenever $1 \leq \Lambda^\Psi<\infty$.
\begin{theorem}
The space $\ell_{\theta(\infty)}$ is not separable.
\end{theorem}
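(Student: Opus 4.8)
The plan is to exhibit an uncountable subset of $\ell_{\theta(\infty)}$ whose elements are pairwise separated by a fixed positive distance, and then invoke the standard cardinality obstruction: a separable metric space cannot contain an uncountable set of mutually $\delta$-separated points for any $\delta>0$. The construction must be phrased at the level of the weighted coordinates $(\omega(\alpha_{n},\beta_{n}))x_{n}$, since it is precisely these values that the norm $\|\mathbf{x}\|_{\ell_{\theta(\infty)}}=\sup_{n\in\mathbb{N}}|(\omega(\alpha_{n},\beta_{n}))x_{n}|$ measures.

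First I would consider, for each subset $S\subseteq\mathbb{N}$, the sequence $\mathbf{x}_{S}=\{(\omega(\alpha_{n},\beta_{n}))x_{n}\}_{n\in\mathbb{N}}$ whose $n$-th weighted coordinate equals $1$ when $n\in S$ and $0$ otherwise. Each such sequence is bounded, with supremum $0$ or $1$, so $\mathbf{x}_{S}\in\ell_{\theta(\infty)}$; and the assignment $S\mapsto\mathbf{x}_{S}$ is injective, so the family $A=\{\mathbf{x}_{S}:S\subseteq\mathbb{N}\}$ is in bijection with the power set $\mathcal{P}(\mathbb{N})$ and is therefore uncountable. Next I would compute the mutual distances: if $S\neq S'$, there is an index $n$ lying in exactly one of $S,S'$, at which the weighted coordinates of $\mathbf{x}_{S}$ and $\mathbf{x}_{S'}$ differ by $1$, while at every other index the difference is $0$ or $1$. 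Hence $\|\mathbf{x}_{S}-\mathbf{x}_{S'}\|_{\ell_{\theta(\infty)}}=1$, so any two distinct members of $A$ sit at distance exactly $1$.

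Finally I would derive the contradiction. Suppose, toward a contradiction, that $\ell_{\theta(\infty)}$ admitted a countable dense set $D$. Around each $\mathbf{x}_{S}\in A$ I place the open ball of radius $\tfrac{1}{3}$; since the points of $A$ are pairwise at distance $1>\tfrac{2}{3}$, these balls are pairwise disjoint. Density forces each ball to meet $D$, and disjointness forces the chosen points of $D$ to be distinct, yielding an injection from the uncountable set $A$ into the countable set $D$. This is impossible, so no countable dense subset exists and $\ell_{\theta(\infty)}$ is not separable.

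The only point demanding genuine care is the separation estimate in the second paragraph: I must confirm that the lower bound $\|\mathbf{x}_{S}-\mathbf{x}_{S'}\|_{\ell_{\theta(\infty)}}\geq 1$ holds in the $\theta$-weighted supremum norm, which is immediate once the $\{0,1\}$-valued family is built directly out of the weighted coordinates rather than the raw entries $x_{n}$. With the separation constant pinned at $1$, the remaining ball-packing argument is entirely routine, and the result stands in sharp contrast to the separability of $\ell_{\Lambda^{\Psi}}$ established earlier for finite exponent.
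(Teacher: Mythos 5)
Your proof is correct, but it follows a different route from the paper. The paper runs a Cantor-style diagonal argument: given an \emph{arbitrary} countable family $\{\mathbf{x}_{n}\}_{n\in\mathbb{N}}$ in $\ell_{\theta(\infty)}$, it constructs a single element $\mathbf{x}$ whose $n$-th weighted coordinate is chosen (as $0$ or as $(\omega(\alpha_{n},\beta_{n}))x_{n}^{(n)}+1$, according to whether $|(\omega(\alpha_{n},\beta_{n}))x_{n}^{(n)}|\geq 1$ or $<1$) so that $\|\mathbf{x}-\mathbf{x}_{n}\|_{\ell_{\theta(\infty)}}\geq 1$ for every $n$, which directly shows no countable set can be dense. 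You instead build, once and for all, the uncountable $1$-separated family $\{\mathbf{x}_{S}:S\subseteq\mathbb{N}\}$ of $\{0,1\}$-valued weighted-coordinate sequences indexed by $\mathcal{P}(\mathbb{N})$, and then invoke the ball-packing obstruction: disjoint balls of radius $\tfrac{1}{3}$ around uncountably many points cannot all meet a countable dense set. The two arguments trade off as follows: the paper's diagonalization is self-contained and needs no cardinality lemma beyond "a point at distance $\geq 1$ from every member of the family lies in no small ball around any of them," whereas your construction isolates a reusable structural fact --- $\ell_{\theta(\infty)}$ contains an uncountable discrete subset isometric to $\mathcal{P}(\mathbb{N})$ with the distance-$1$ metric --- which immediately rules out separability for any superspace as well. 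Your handling of the one delicate point (defining the family through the weighted coordinates $(\omega(\alpha_{n},\beta_{n}))x_{n}$, which are exactly what the norm sees, rather than through the raw entries $x_{n}$) is sound and consistent with how the paper itself manipulates elements of the space. As a minor remark, your separation constant is exactly $1$, matching the paper's bound; the paper's claim of strict inequality $\|\mathbf{x}-\mathbf{x}_{n}\|_{\ell_{\theta(\infty)}}>1$ is in fact only $\geq 1$, but both arguments need only a fixed positive separation, so nothing is affected.
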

\begin{proof}
Let us take any enumerable sequence of elements of $\ell_{\theta(\infty)}$, namely $\left\{\mathbf{x}_{n}\right\}_{n \in \mathbb{N}}$, where we take the sequences in the structure
$$
\begin{aligned}
& \mathbf{x}_{1}=\left((\omega(\alpha_{1},\beta_{1}))x_{1}^{(1)}, (\omega(\alpha_{2},\beta_{2}))x_{2}^{(1)}, x_{3}^{(1)}, \ldots, (\omega(\alpha_{k},\beta_{k}))x_{k}^{(1)}, \ldots\right) \\
& \mathbf{x}_{2}=\left((\omega(\alpha_{1},\beta_{1}))x_{1}^{(2)}, (\omega(\alpha_{2},\beta_{2}))x_{2}^{(2)}, x_{3}^{(2)}, \ldots, (\omega(\alpha_{k},\beta_{k}))x_{k}^{(2)}, \ldots\right) \\
& \mathbf{x}_{3}=\left((\omega(\alpha_{1},\beta_{1}))x_{1}^{(3)}, (\omega(\alpha_{2},\beta_{2}))x_{2}^{(3)}, x_{3}^{(3)}, \ldots, (\omega(\alpha_{k},\beta_{k}))x_{k}^{(3)}, \ldots\right) \\
& \ldots \ldots \ldots \ldots \ldots \ldots \ldots \ldots \ldots \ldots \ldots \ldots \ldots \ldots . \ldots \ldots \\
& \mathbf{x}_{k}=\left((\omega(\alpha_{1},\beta_{1}))x_{1}^{(k)}, (\omega(\alpha_{2},\beta_{2}))x_{2}^{(k)}, x_{3}^{(k)}, \ldots, (\omega(\alpha_{k},\beta_{k}))x_{k}^{(k)}, \ldots\right)
\end{aligned}
$$
We now demonstrate that there pertains an element in $\ell_{\theta(\infty)}$ which is  greater than 1 for  $\left\{\mathbf{x}_{n}\right\}_{n \in \mathbb{N}}$, showing the non-separability nature of the $\ell_{\theta(\infty)}$ space.
Let us take $\mathbf{x}=\left\{(\omega(\alpha_{n},\beta_{n}))x_{n}\right\}_{n \in \mathbb{N}}$ as
$$
(\omega(\alpha_{n},\beta_{n}))x_{n}= \begin{cases}0, & \text { if }\left|(\omega(\alpha_{n},\beta_{n}))x_{n}^{(n)}\right| \geq 1 \\ (\omega(\alpha_{n},\beta_{n}))x_{n}=(\omega(\alpha_{n},\beta_{n}))x_{n}^{(n)}+1, & \text { if }\left|(\omega(\alpha_{n},\beta_{n}))x_{n}^{(n)}\right|<1\end{cases}
$$
It is clear that $\mathbf{x} \in \ell_{\theta(\infty)}$ and $\left\|\mathbf{x}-\mathbf{x}_{n}\right\|_{\ell_{\theta(\infty)}}>1$ for all $n \in \mathbb{N}$, which conforms that $\ell_{\theta(\infty)}$ is not separable.
\end{proof}
We now demarcate some intricate subspaces of $\ell_{\theta(\infty)}$, which are widely used in functional analysis, for example, to construct counter-examples.
\begin{definition}
Let $\mathbf{x}=\left((\omega(\alpha_{1},\beta_{1}))x_{1}, (\omega(\alpha_{1},\beta_{1}))x_{1}, \ldots\right)$.
By $c$ we designate the subspace of $\ell_{\theta(\infty)}$ such that $\lim _{n \rightarrow \infty} (\omega(\alpha_{n},\beta_{n}))x_{n}$ pertains and is finite.
By $c_{0}$ we designate the subspace of $\ell_{\theta(\infty)}$ such that $\lim _{n \rightarrow \infty} (\omega(\alpha_{n},\beta_{n}))x_{n}=0$.
By $c_{00}$ we designate the subspace of $\ell_{\theta(\infty)} \operatorname{such}$ that $\operatorname{supp}(\mathbf{x})$ is finite.
\end{definition}
These newly introduced spaces enjoy some intricate interesting properties, e.g., $c_{0}$ is the closure of $c_{00}$ in $\ell_{\theta(\infty)}$.

\subsection{$\theta$-Hardy and $\theta$-Hilbert Inequalities}
We now deal with the discrete version of the well-known Hardy inequality.

\begin{theorem}
($\theta$- Hardy's inequality). Let $\left\{a_{n}\right\}_{n \in \mathbb{N}}$ be a sequence of real positive numbers such that $\sum_{n=1}^{\infty} a_{n}^{\Lambda^\Psi}<\infty$. Then
$$
\sum_{n=1}^{\infty}\left(\frac{1}{n} \sum_{k=1}^{n} a_{k}\right)^{\Lambda^\Psi} \leq\left(\frac{\Lambda^\Psi}{\Lambda^\Psi-1}\right)^{\Lambda^\Psi} \sum_{n=1}^{\infty} a_{n}^{\Lambda^\Psi}
$$
\end{theorem}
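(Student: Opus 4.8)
The plan is to follow the classical discrete Hardy argument verbatim, with $\Lambda^\Psi$ playing the role of the exponent $p$ and $\frac{\Lambda^\Psi}{\Lambda^\Psi-1}$ its conjugate; the only genuine inequality required is the $\theta$-Hölder inequality already proved above. First I would introduce the arithmetic (Cesàro) means $A_n=\frac{1}{n}\sum_{k=1}^n a_k$ with the convention $A_0=0$, and record the elementary recursion $nA_n-(n-1)A_{n-1}=a_n$. The left-hand summand is then exactly $A_n^{\Lambda^\Psi}$, so the goal becomes an upper bound for $\sum_n A_n^{\Lambda^\Psi}$.

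The core step is a pointwise estimate. Substituting $a_n=nA_n-(n-1)A_{n-1}$ into the quantity $A_n^{\Lambda^\Psi}-\frac{\Lambda^\Psi}{\Lambda^\Psi-1}\,a_nA_n^{\Lambda^\Psi-1}$ and then bounding the cross term $A_{n-1}A_n^{\Lambda^\Psi-1}$ by Young's inequality with exponents $\Lambda^\Psi$ and $\frac{\Lambda^\Psi}{\Lambda^\Psi-1}$, the coefficients of $A_n^{\Lambda^\Psi}$ collapse and one arrives at the telescoping inequality $A_n^{\Lambda^\Psi}\leq \frac{\Lambda^\Psi}{\Lambda^\Psi-1}a_nA_n^{\Lambda^\Psi-1}+\frac{n-1}{\Lambda^\Psi-1}A_{n-1}^{\Lambda^\Psi}-\frac{n}{\Lambda^\Psi-1}A_n^{\Lambda^\Psi}$. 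Summing from $1$ to $N$, the last two terms telescope to $-\frac{N}{\Lambda^\Psi-1}A_N^{\Lambda^\Psi}\leq 0$ and are simply discarded, leaving $\sum_{n=1}^N A_n^{\Lambda^\Psi}\leq \frac{\Lambda^\Psi}{\Lambda^\Psi-1}\sum_{n=1}^N a_nA_n^{\Lambda^\Psi-1}$.

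Next I would apply the $\theta$-Hölder inequality to $\sum_{n=1}^N a_nA_n^{\Lambda^\Psi-1}$ with the conjugate pair $\Lambda^\Psi,\ \frac{\Lambda^\Psi}{\Lambda^\Psi-1}$, using the identity $(\Lambda^\Psi-1)\cdot\frac{\Lambda^\Psi}{\Lambda^\Psi-1}=\Lambda^\Psi$ so that the factor carrying the means becomes $\left(\sum_{n=1}^N A_n^{\Lambda^\Psi}\right)^{(\Lambda^\Psi-1)/\Lambda^\Psi}$. This yields $\sum_{n=1}^N A_n^{\Lambda^\Psi}\leq \frac{\Lambda^\Psi}{\Lambda^\Psi-1}\left(\sum_{n=1}^N a_n^{\Lambda^\Psi}\right)^{1/\Lambda^\Psi}\left(\sum_{n=1}^N A_n^{\Lambda^\Psi}\right)^{(\Lambda^\Psi-1)/\Lambda^\Psi}$. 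Dividing through by the last factor and raising to the power $\Lambda^\Psi$ produces the claimed constant $\left(\frac{\Lambda^\Psi}{\Lambda^\Psi-1}\right)^{\Lambda^\Psi}$ for each finite $N$, and letting $N\to\infty$ completes the argument.

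The main obstacle is bookkeeping rather than conceptual: the division step must be carried out on the finite partial sums, where each $\sum_{n=1}^N A_n^{\Lambda^\Psi}$ is positive and finite, so that the cancellation is unambiguously justified, and one must confirm that the telescoped boundary term carries the correct sign to be dropped. A secondary point is verifying that $\Lambda^\Psi>1$ throughout — guaranteed by the standing hypothesis that $\theta$ ranges strictly between $1$ and $\infty$ — since both the conjugate exponent and the Young step degenerate at $\Lambda^\Psi=1$.
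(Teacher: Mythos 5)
Your proposal is correct and follows essentially the same route as the paper's own proof: the same Cesàro-mean substitution $a_n = nA_n-(n-1)A_{n-1}$, the same Young's-inequality bound on the cross term leading to the telescoping estimate with boundary term $-\frac{N}{\Lambda^\Psi-1}A_N^{\Lambda^\Psi}\le 0$, and the same final application of the $\theta$-H\"older inequality followed by division and raising to the power $\Lambda^\Psi$. If anything, your insistence on carrying out the H\"older and division steps on finite partial sums before letting $N\to\infty$ is slightly more careful than the paper, which performs that step directly on the infinite series.
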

\begin{proof} 
Let $\alpha_{n}=\frac{A_{n}}{n}$ where $A_{n}=a_{1}+a_{2}+\cdots+a_{n}$, i.e., $A_{n}=n \alpha_{n}$, enunciating,
$$
a_{1}+a_{2}+\cdots+a_{n}=n \alpha_{n}
$$
deducing that $a_{n}=n \alpha_{n}-(n-1) \alpha_{n-1}$. Regarding now,
$$
\begin{aligned}
\alpha_{n}^{\Lambda^\Psi}-\frac{\Lambda^\Psi}{\Lambda^\Psi-1} \alpha_{n}^{\Lambda^\Psi-1} a_{n} & =\alpha_{n}^{\Lambda^\Psi}-\frac{\Lambda^\Psi}{\Lambda^\Psi-1}\left[n \alpha_{n}-(n-1) \alpha_{n-1}\right] \alpha_{n}^{\Lambda^\Psi-1} \\
& =\alpha_{n}^{\Lambda^\Psi}-\frac{\Lambda^\Psi n}{\Lambda^\Psi-1} \alpha_{n} \alpha_{n}^{\Lambda^\Psi-1}+\frac{\Lambda^\Psi(n-1)}{\Lambda^\Psi-1} \alpha_{n-1} \alpha_{n}^{\Lambda^\Psi-1}
\end{aligned}
$$
Now we can have,
$$
\begin{aligned}
\frac{\Lambda^\Psi(n-1)}{\Lambda^\Psi-1} \alpha_{n-1} \alpha_{n}^{\Lambda^\Psi-1} & \leq \frac{\Lambda^\Psi(n-1)}{\Lambda^\Psi-1} \frac{\alpha_{n-1}^{\Lambda^\Psi}}{\Lambda^\Psi}+\frac{\Lambda^\Psi(n-1)}{\Lambda^\Psi-1} \frac{\alpha_{n}^{\frac{\Lambda^\Psi}{\Lambda^\Psi - 1}(\Lambda^\Psi-1)}}{\frac{\Lambda^\Psi}{\Lambda^\Psi - 1}} \\
& =\frac{n-1}{\Lambda^\Psi-1} \alpha_{n-1}^{\Lambda^\Psi}+\frac{\Lambda^\Psi(n-1)}{\Lambda^\Psi-1}\left(1-\frac{1}{\Lambda^\Psi}\right) \alpha_{n}^{\Lambda^\Psi} \\
& =\frac{n-1}{\Lambda^\Psi-1} \alpha_{n-1}^{\Lambda^\Psi}+(n-1) \alpha_{n}^{\Lambda^\Psi}
\end{aligned}
$$
therefore
$$
\begin{aligned}
\alpha_{n}^{\Lambda^\Psi}-\frac{\Lambda^\Psi}{\Lambda^\Psi-1} \alpha_{n}^{\Lambda^\Psi-1} a_{n} & \leq \alpha_{n}^{\Lambda^\Psi}-\frac{\Lambda^\Psi n}{\Lambda^\Psi-1} \alpha_{n}^{\Lambda^\Psi}+\frac{n-1}{\Lambda^\Psi-1} \alpha_{n-1}^{\Lambda^\Psi}+(n-1) \alpha_{n}^{\Lambda^\Psi} \\
& =\frac{\Lambda^\Psi \alpha_{n}^{\Lambda^\Psi}-\alpha_{n}^{\Lambda^\Psi}-\Lambda^\Psi n \alpha_{n}^{\Lambda^\Psi}}{\Lambda^\Psi-1}+\frac{(n-1) \alpha_{n-1}^{\Lambda^\Psi}+(\Lambda^\Psi-1)(n-1) \alpha_{n}^{\Lambda^\Psi}}{\Lambda^\Psi-1} \\
& =\frac{\Lambda^\Psi \alpha_{n}^{\Lambda^\Psi}-\alpha_{n}^{\Lambda^\Psi}-\Lambda^\Psi n \alpha_{n}^{\Lambda^\Psi}+(n-1) \alpha_{n-1}^{\Lambda^\Psi}+(\Lambda^\Psi n-\Lambda^\Psi-n+1) \alpha_{n}^{\Lambda^\Psi}}{\Lambda^\Psi-1} \\
& =\frac{1}{\Lambda^\Psi-1}\left[(n-1) \alpha_{n-1}^{\Lambda^\Psi}-n \alpha_{n}^{\Lambda^\Psi}\right],
\end{aligned}
$$
from which
$$
\begin{aligned}
\sum_{n=1}^{N} \alpha_{n}^{\Lambda^\Psi}-\frac{\Lambda^\Psi}{\Lambda^\Psi-1} \sum_{n=1}^{N} \alpha_{n}^{\Lambda^\Psi-1} a_{n} & \leq \frac{1}{\Lambda^\Psi-1} \sum_{n=1}^{N}\left[(n-1) \alpha_{n-1}^{\Lambda^\Psi}-n \alpha_{n}^{\Lambda^\Psi}\right] \\
& =\frac{1}{\Lambda^\Psi-1}\left[-\alpha_{1}^{\Lambda^\Psi}+\alpha_{1}^{\Lambda^\Psi}-2 \alpha_{2}^{\Lambda^\Psi}+\cdots-N \alpha_{N}^{\Lambda^\Psi}\right] \\
& =-\frac{N \alpha_{N}^{\Lambda^\Psi}}{\Lambda^\Psi-1} \leq 0 .
\end{aligned}
$$
Then
$$
\sum_{n=1}^{N} \alpha_{n}^{\Lambda^\Psi} \leq \frac{\Lambda^\Psi}{\Lambda^\Psi-1} \sum_{n=1}^{N} \alpha_{n}^{\Lambda^\Psi-1} a_{n}
$$
By $\theta$-Hölder's inequality we have that
$$
\begin{aligned}
\sum_{n=1}^{\infty} \alpha_{n}^{\Lambda^\Psi} & \leq \frac{\Lambda^\Psi}{\Lambda^\Psi-1}\left(\sum_{n=1}^{\infty} a_{n}^{\Lambda^\Psi}\right)^{\frac{1}{\Lambda^\Psi}}\left(\sum_{n=1}^{\infty} \alpha_{n}^{\frac{\Lambda^\Psi}{\Lambda^\Psi - 1}(\Lambda^\Psi-1)}\right)^{\frac{1}{\frac{\Lambda^\Psi}{\Lambda^\Psi - 1}}} \\
& =\frac{\Lambda^\Psi}{\Lambda^\Psi-1}\left(\sum_{n=1}^{\infty} a_{n}^{\Lambda^\Psi}\right)^{\frac{1}{\Lambda^\Psi}}\left(\sum_{n=1}^{\infty} \alpha_{n}^{\Lambda^\Psi}\right)^{\frac{1}{\frac{\Lambda^\Psi}{\Lambda^\Psi - 1}}},
\end{aligned}
$$

implying
$$
\left(\sum_{n=1}^{\infty} \alpha_{n}^{\Lambda^\Psi}\right)^{1-\frac{1}{\frac{\Lambda^\Psi}{\Lambda^\Psi - 1}}} \leq \frac{\Lambda^\Psi}{\Lambda^\Psi-1}\left(\sum_{n=1}^{\infty} a_{n}^{\Lambda^\Psi}\right)^{\frac{1}{\Lambda^\Psi}}
$$
and this implies
$$
\sum_{n=1}^{\infty}\left(\frac{1}{n} \sum_{k=1}^{\infty} a_{k}\right)^{\Lambda^\Psi} \leq\left(\frac{\Lambda^\Psi}{\Lambda^\Psi-1}\right)^{\Lambda^\Psi} \sum_{n=1}^{\infty} a_{n}^{\Lambda^\Psi}
$$
We now want to investigate the above inequality. We require to remember some intricate basic facts about complex analysis, namely
$$
\frac{\pi}{\sin (\pi z)}=\frac{1}{z}+\sum_{n=1}^{\infty}(-1)^{n}\left(\frac{1}{z+n}+\frac{1}{z-n}\right) .
$$
Let us regard the function
$$
f(z)=\frac{1}{\sqrt[\Lambda^\Psi]{z}(z+1)} \quad(\Lambda^\Psi>1)
$$
demarcated in the region $D_{1}=\{z \in \mathbb{C}: 0<|z|<1\}$. We want to deduce the Laurent expansion. In fact, if $|z|<1$, implying
$$
\frac{1}{1+z}=\frac{1}{1-(-z)}=\sum_{n=0}^{\infty}(-z)^{n}=\sum_{n=0}^{\infty}(-1)^{n} z^{n}
$$
therefore
$$
f(z)=\sum_{n=0}^{\infty}(-1)^{n} z^{n-\frac{1}{\Lambda^\Psi}}
$$
By the same argument, let us regard
$$
g(z)=\frac{1}{z^{1+\frac{1}{\Lambda^\Psi}}\left(1+\frac{1}{z}\right)}
$$
demarcated in the region $D_{2}=\{z \in \mathbb{C}:|z|>1\}$. Since $\left|\frac{1}{z}\right|<1$, implying
$$
\frac{1}{1+\frac{1}{z}}=\frac{1}{1-\left(-\frac{1}{z}\right)}=\sum_{n=0}^{\infty}\left(-\frac{1}{z}\right)^{n}=\sum_{n=0}^{\infty}(-1)^{n} z^{-n}
$$
Therefore
$$
g(z)=\sum_{n=0}^{\infty}(-1)^{n} z^{-n-1-\frac{1}{\Lambda^\Psi}}
$$
\end{proof}
We now deduce some intricate auxiliary inequality before projecting the validity of the Hilbert inequality.
\begin{theorem}
For each positive number $m$ and for all real $\Lambda^\Psi>1$ we have
$$
\sum_{n=1}^{\infty} \frac{m^{\frac{1}{\Lambda^\Psi}}}{n^{\frac{1}{\Lambda^\Psi}}(m+n)} \leq \frac{\pi}{\sin \left(\frac{\pi}{\Lambda^\Psi}\right)}
$$
\end{theorem}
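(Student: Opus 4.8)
The plan is to dominate the series by the corresponding improper integral and then evaluate that integral in closed form, reusing exactly the complex-analytic machinery assembled at the close of the previous proof, namely the reflection formula for $\pi/\sin(\pi z)$ and the function $f(z)=\frac{1}{\sqrt[\Lambda^\Psi]{z}(z+1)}$ with its Laurent expansions. First I would fix $m>0$ and set $f(t)=\frac{m^{1/\Lambda^\Psi}}{t^{1/\Lambda^\Psi}(m+t)}$ for $t>0$. A direct differentiation gives $f'(t)=-m^{1/\Lambda^\Psi}\,t^{-1/\Lambda^\Psi-1}(m+t)^{-2}\left[\tfrac{1}{\Lambda^\Psi}(m+t)+t\right]<0$, so $f$ is strictly decreasing on $(0,\infty)$. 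Hence for every integer $n\geq 1$ and every $t\in[n-1,n]$ we have $f(t)\geq f(n)$, so $f(n)\leq\int_{n-1}^{n}f(t)\,dt$; summing over $n$ telescopes the domain and yields
$$\sum_{n=1}^{\infty}\frac{m^{1/\Lambda^\Psi}}{n^{1/\Lambda^\Psi}(m+n)}\leq\int_{0}^{\infty}\frac{m^{1/\Lambda^\Psi}}{t^{1/\Lambda^\Psi}(m+t)}\,dt,$$
the integral converging at $0$ because $1/\Lambda^\Psi<1$ and at $\infty$ because $1+1/\Lambda^\Psi>1$.

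Next I would show the right-hand integral equals $\pi/\sin(\pi/\Lambda^\Psi)$ independently of $m$. The substitution $t=ms$, $dt=m\,ds$, collapses the $m$-dependence entirely, leaving $\int_{0}^{\infty}\frac{ds}{s^{1/\Lambda^\Psi}(1+s)}$, which is precisely the integral of the function $f(z)$ introduced earlier (with $m=1$). I would evaluate it by a keyhole contour around the branch cut of $z^{1/\Lambda^\Psi}$ along the positive real axis, taking the branch with $\arg z\in(0,2\pi)$: the only enclosed singularity is the simple pole at $z=-1=e^{i\pi}$ with residue $e^{-i\pi/\Lambda^\Psi}$, the outer and inner circular arcs contribute nothing in the limit, and the two edges of the cut differ by the phase factor $e^{-2\pi i/\Lambda^\Psi}$. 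The residue theorem then gives $(1-e^{-2\pi i/\Lambda^\Psi})\,I=2\pi i\,e^{-i\pi/\Lambda^\Psi}$, whence $I=\frac{2\pi i}{e^{i\pi/\Lambda^\Psi}-e^{-i\pi/\Lambda^\Psi}}=\frac{\pi}{\sin(\pi/\Lambda^\Psi)}$. Equivalently one may recognize the integral as the Beta value $B\!\left(1-\tfrac{1}{\Lambda^\Psi},\tfrac{1}{\Lambda^\Psi}\right)$ and invoke the reflection formula, which is the real-variable shadow of the same computation. Chaining the two displays completes the argument.

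The step I expect to be the main obstacle is the rigorous evaluation of the integral rather than the comparison. One must fix a single-valued branch of $z^{1/\Lambda^\Psi}$ compatible with the contour, verify that the contributions of the large and small arcs vanish as their radii tend to $\infty$ and $0$ (the arc-length estimates rely on $1+1/\Lambda^\Psi>1$ and $1-1/\Lambda^\Psi>0$ respectively), and track the phase accumulated across the cut so that the $e^{-2\pi i/\Lambda^\Psi}$ factor, and hence the $\sin(\pi/\Lambda^\Psi)$ in the denominator, emerges with the correct sign. The integral comparison itself is routine once monotonicity is established; the only subtlety there is integrability of $f$ at the origin, which holds \emph{precisely} because $\Lambda^\Psi>1$.
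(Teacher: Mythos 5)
Your proposal is correct, and it coincides with the paper's proof for the first half: the paper likewise dominates the sum by $\int_{0}^{\infty} \frac{m^{1/\Lambda^\Psi}}{x^{1/\Lambda^\Psi}(m+x)}\,\mathrm{d}x$ (though it omits the monotonicity justification you supply) and then rescales to obtain $\int_{0}^{\infty}\frac{\mathrm{d}z}{z^{1/\Lambda^\Psi}(1+z)}$. Where you genuinely diverge is in evaluating this integral. The paper splits it at $z=1$, inserts the two geometric (Laurent) expansions $\sum_{n\geq 0}(-1)^{n}z^{n-1/\Lambda^\Psi}$ on $(0,1)$ and $\sum_{n\geq 0}(-1)^{n}z^{-n-1-1/\Lambda^\Psi}$ on $(1,\infty)$ prepared at the end of the Hardy-inequality proof, integrates term by term, and recognizes the resulting alternating series as the partial-fraction expansion $\frac{\pi}{\sin(\pi z)}=\frac{1}{z}+\sum_{n\geq 1}(-1)^{n}\left(\frac{1}{z+n}+\frac{1}{z-n}\right)$ evaluated at $z=1/\Lambda^\Psi$. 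You instead compute the integral in closed form by a keyhole contour around the branch cut of $z^{-1/\Lambda^\Psi}$ (equivalently, identify it as $B\left(1-\frac{1}{\Lambda^\Psi},\frac{1}{\Lambda^\Psi}\right)$ and invoke Euler's reflection formula); your residue $e^{-i\pi/\Lambda^\Psi}$ at $z=-1$, the phase factor $e^{-2\pi i/\Lambda^\Psi}$ across the cut, the algebra yielding $\pi/\sin(\pi/\Lambda^\Psi)$, and the arc estimates based on $1-\frac{1}{\Lambda^\Psi}>0$ and $1+\frac{1}{\Lambda^\Psi}>1$ are all correct. The trade-off: the paper's route reuses its prepared expansions and stays with real-variable term-by-term integration, but it silently interchanges summation and integration for series that are only conditionally convergent after integrating (a gap needing an Abel or alternating-series argument, which the paper does not give); your route is a self-contained, standard contour computation that delivers the exact value of the integral, at the price of importing the residue theorem and branch-cut bookkeeping. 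One small caveat: your opening claim to be ``reusing exactly'' the paper's Laurent expansions overstates the overlap, since your actual evaluation never touches them -- both arguments ultimately rest on the same reflection-formula fact, but reach it by different mechanisms.
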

\begin{proof}
Reflecting that
$$
\begin{aligned}
\sum_{n=1}^{\infty} \frac{m^{\frac{1}{\Lambda^\Psi}}}{n^{\frac{1}{\Lambda^\Psi}}(m+n)} & \leq \int_{0}^{\infty} \frac{m^{\frac{1}{\Lambda^\Psi}}}{x^{\frac{1}{\Lambda^\Psi}}(m+x)} \mathrm{d} x \\
& =\int_{0}^{\infty} \frac{\mathrm{d} z}{z^{\frac{1}{\Lambda^\Psi}}(1+z)} \\
& =\int_{0}^{1} \frac{\mathrm{d} z}{z^{\frac{1}{\Lambda^\Psi}}(1+z)}+\int_{1}^{\infty} \frac{\mathrm{d} z}{z^{1+\frac{1}{\Lambda^\Psi}}\left(1+\frac{1}{z}\right)} .
\end{aligned}
$$
we can now deduce that

$$
\sum_{n=1}^{\infty} \frac{m^{\frac{1}{\Lambda^\Psi}}}{n^{\frac{1}{\Lambda^\Psi}}(m+n)} \leq \int_{0}^{1}\left(\sum_{n=0}^{\infty}(-1)^{n} z^{n-\frac{1}{\Lambda^\Psi}}\right) \mathrm{d} z+\int_{1}^{\infty}\left(\sum_{n=0}^{\infty}(-1)^{n} z^{-n-1-\frac{1}{\Lambda^\Psi}}\right) \mathrm{d} z
$$

$$
\begin{aligned}
& =\sum_{n=0}^{\infty}(-1)^{n} \int_{0}^{1} z^{n-\frac{1}{\Lambda^\Psi}} \mathrm{~d} z+\sum_{n=0}^{\infty}(-1)^{n} \int_{1}^{\infty} z^{-n-1-\frac{1}{\Lambda^\Psi}} \mathrm{~d} z \\
& =\sum_{n=0}^{\infty} \frac{(-1)^{n}}{n-\frac{1}{\Lambda^\Psi}+1}+\sum_{n=0}^{\infty} \frac{(-1)^{n}}{\frac{1}{\Lambda^\Psi}+n} \\
& =\sum_{n=1}^{\infty} \frac{(-1)^{n}}{\frac{1}{\Lambda^\Psi}-n}+\Lambda^\Psi+\sum_{n=1}^{\infty} \frac{(-1)^{n}}{\frac{1}{\Lambda^\Psi}+n} \\
& =\Lambda^\Psi+\sum_{n=1}^{\infty}(-1)^{n}\left(\frac{1}{\frac{1}{\Lambda^\Psi}-n}+\frac{1}{\frac{1}{\Lambda^\Psi}+n}\right) \\
& =\frac{\pi}{\sin \left(\frac{\pi}{\Lambda^\Psi}\right)} .
\end{aligned}
$$
This last one is obtained with $z=\frac{1}{\Lambda^\Psi}$.
\end{proof}
\begin{theorem}
($\theta$-Hilbert's inequality). Let $\Lambda^\Psi, \frac{\Lambda^\Psi}{\Lambda^\Psi - 1}>1$ be such that $\frac{1}{\Lambda^\Psi}+\frac{1}{\frac{\Lambda^\Psi}{\Lambda^\Psi - 1}}=1$ and $\left\{a_{n}\right\}_{n \in \mathbb{N}},\left\{b_{n}\right\}_{n \in \mathbb{N}}$ be sequences of nonnegative numbers such that $\sum_{m=1}^{\infty} a_{m}^{\Lambda^\Psi}$ and $\sum_{n=1}^{\infty} b_{n}^{\frac{\Lambda^\Psi}{\Lambda^\Psi - 1}}$ are convergent. Then
$$
\sum_{m, n=1}^{\infty} \frac{a_{m} b_{n}}{m+n} \leq \frac{\pi}{\sin \left(\frac{\pi}{\Lambda^\Psi}\right)}\left(\sum_{m=1}^{\infty} a_{m}^{\Lambda^\Psi}\right)^{\frac{1}{\Lambda^\Psi}}\left(\sum_{n=1}^{\infty} b_{n}^{\frac{\Lambda^\Psi}{\Lambda^\Psi - 1}}\right)^{\frac{1}{\frac{\Lambda^\Psi}{\Lambda^\Psi - 1}}}
$$
\end{theorem}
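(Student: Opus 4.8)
The plan is to recognize that, writing $p := \Lambda^\Psi$ and $q := \frac{\Lambda^\Psi}{\Lambda^\Psi-1}$, the hypothesis $\frac{1}{p}+\frac{1}{q}=1$ makes these conjugate exponents, so the statement is exactly the classical Hilbert inequality. The engine of the proof is the auxiliary estimate proved immediately above, namely $\sum_{n=1}^{\infty}\frac{m^{1/r}}{n^{1/r}(m+n)}\leq\frac{\pi}{\sin(\pi/r)}$, valid for every real $r>1$ and every positive integer $m$. A key elementary observation to record first is that $\frac{\pi}{q}=\pi-\frac{\pi}{p}$, whence $\sin(\pi/q)=\sin(\pi/p)$; this is what lets the two constants produced below collapse into the single factor $\frac{\pi}{\sin(\pi/p)}$ demanded by the statement.

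First I would perform a symmetric splitting of each summand. Using $\frac{1}{m+n}=\frac{1}{(m+n)^{1/p}}\cdot\frac{1}{(m+n)^{1/q}}$ and inserting the balancing factor $\left(\frac{m}{n}\right)^{1/(pq)}$ against its reciprocal, I write
$$
\frac{a_m b_n}{m+n}=\left[\frac{a_m}{(m+n)^{1/p}}\left(\frac{m}{n}\right)^{1/(pq)}\right]\left[\frac{b_n}{(m+n)^{1/q}}\left(\frac{n}{m}\right)^{1/(pq)}\right].
$$
Treating $(m,n)$ as a single index ranging over $\mathbb{N}\times\mathbb{N}$ and applying the $\theta$-Hölder inequality proved above, in its countable form, with exponents $p$ and $q$, I obtain
$$
\sum_{m,n=1}^{\infty}\frac{a_m b_n}{m+n}\leq\left(\sum_{m,n=1}^{\infty}\frac{a_m^{p}}{m+n}\left(\frac{m}{n}\right)^{1/q}\right)^{1/p}\left(\sum_{m,n=1}^{\infty}\frac{b_n^{q}}{m+n}\left(\frac{n}{m}\right)^{1/p}\right)^{1/q},
$$
where I have used $\frac{p}{pq}=\frac{1}{q}$ and $\frac{q}{pq}=\frac{1}{p}$ to simplify the exponents on the balancing factors.

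Next I would evaluate the two double sums by iterating; since every term is nonnegative, a Tonelli-type argument permits reordering. In the first factor, summing over $n$ for fixed $m$ gives $\sum_m a_m^{p}\sum_n\frac{m^{1/q}}{n^{1/q}(m+n)}$, and the inner sum is bounded by $\frac{\pi}{\sin(\pi/q)}$ via the auxiliary estimate with $r=q$; in the second factor, summing over $m$ for fixed $n$ gives $\sum_n b_n^{q}\sum_m\frac{n^{1/p}}{m^{1/p}(m+n)}\leq\frac{\pi}{\sin(\pi/p)}\sum_n b_n^{q}$ via the auxiliary estimate with $r=p$ (the same statement with the indices relabelled). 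Substituting these bounds, invoking $\sin(\pi/q)=\sin(\pi/p)$, and collecting the constant as $\left(\frac{\pi}{\sin(\pi/p)}\right)^{1/p+1/q}=\frac{\pi}{\sin(\pi/p)}$ yields the claimed inequality.

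The main obstacle is bookkeeping rather than depth: one must choose the splitting exponent $1/(pq)$ so precisely that each application of the auxiliary estimate leaves a constant independent of the free summation index (any other exponent destroys the scale invariance that makes the inner sums bounded), and one must legitimately pass to the infinite double sum. The latter needs the $\theta$-Hölder inequality over the countable index set together with a Tonelli justification for interchanging the order of summation, both licensed by the nonnegativity of $a_m,b_n$. A subsidiary point is that the right-hand side is finite, which is manifest once the bounds above are in hand under the assumed convergence of $\sum a_m^{p}$ and $\sum b_n^{q}$, and this retroactively validates every rearrangement.
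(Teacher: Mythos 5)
Your proposal is correct and follows essentially the same route as the paper: the identical splitting with the balancing factor $\left(\frac{m}{n}\right)^{1/(pq)}$, an application of the $\theta$-H\"older inequality over the double index, and the auxiliary estimate $\sum_{n}\frac{m^{1/r}}{n^{1/r}(m+n)}\leq\frac{\pi}{\sin(\pi/r)}$ applied with $r=q$ and $r=p$. Your explicit justification that $\sin(\pi/q)=\sin(\pi/p)$ (which the paper glosses as an inequality) and your Tonelli remark are welcome refinements, but the argument is the same.
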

\begin{proof}
Using $\theta$-Hölder's inequality and above proposition we deduce
$$
\begin{aligned}
& \sum_{m, n=1}^{\infty} \frac{a_{m} b_{n}}{m+n} \\
= & \sum_{m, n=1}^{\infty} \frac{m^{\frac{1}{\Lambda^\Psi \frac{\Lambda^\Psi}{\Lambda^\Psi - 1}}}}{n^{\frac{1}{\Lambda^\Psi \frac{\Lambda^\Psi}{\Lambda^\Psi - 1}}}} \frac{a_{m}}{(m+n)^{\frac{1}{\Lambda^\Psi}}} \frac{n^{\frac{1}{\Lambda^\Psi \frac{\Lambda^\Psi}{\Lambda^\Psi - 1}}}}{m^{\frac{1}{\Lambda^\Psi \frac{\Lambda^\Psi}{\Lambda^\Psi - 1}}}} \frac{b_{n}}{(m+n)^{\frac{1}{\frac{\Lambda^\Psi}{\Lambda^\Psi - 1}}}} \\
\leq & \left(\sum_{m, n=1}^{\infty}\left(\frac{m^{\frac{1}{\frac{\Lambda^\Psi}{\Lambda^\Psi - 1}}}}{n^{\frac{1}{\frac{\Lambda^\Psi}{\Lambda^\Psi - 1}}}(m+n)}\right) a_{m}^{\Lambda^\Psi}\right)^{\frac{1}{\Lambda^\Psi}}\left(\sum_{m, n=1}^{\infty}\left(\frac{n^{\frac{1}{\Lambda^\Psi}}}{m^{\frac{1}{\Lambda^\Psi}}(m+n)}\right) b_{n}^{\frac{\Lambda^\Psi}{\Lambda^\Psi - 1}}\right)^{\frac{1}{\frac{\Lambda^\Psi}{\Lambda^\Psi - 1}}} \\
= & \left(\sum_{m=1}^{\infty}\left(\sum_{n=1}^{\infty} \frac{m^{\frac{1}{\frac{\Lambda^\Psi}{\Lambda^\Psi - 1}}}}{n^{\frac{1}{\frac{\Lambda^\Psi}{\Lambda^\Psi - 1}}}(m+n)}\right) a_{m}^{\Lambda^\Psi}\right)^{\frac{1}{\Lambda^\Psi}}\left(\sum_{n=1}^{\infty}\left(\sum_{m=1}^{\infty} \frac{n^{\frac{1}{\Lambda^\Psi}}}{m^{\frac{1}{\Lambda^\Psi}}(m+n)}\right) b_{n}^{\frac{\Lambda^\Psi}{\Lambda^\Psi - 1}}\right)^{\frac{1}{\frac{\Lambda^\Psi}{\Lambda^\Psi - 1}}} \\
\leq & \left(\sum_{m=1}^{\infty} \frac{\pi}{\sin \frac{\pi}{\frac{\Lambda^\Psi}{\Lambda^\Psi - 1}}} a_{m}^{\Lambda^\Psi}\right)^{\frac{1}{\Lambda^\Psi}}\left(\sum_{n=1}^{\infty} \frac{\pi}{\sin \frac{\pi}{\Lambda^\Psi}} b_{n}^{\frac{\Lambda^\Psi}{\Lambda^\Psi - 1}}\right)^{\frac{1}{\frac{\Lambda^\Psi}{\Lambda^\Psi - 1}}} \\
\leq & \left(\sum_{m=1}^{\infty} \frac{\pi}{\sin \frac{\pi}{\Lambda^\Psi}} a_{m}^{\Lambda^\Psi}\right)^{\frac{1}{\Lambda^\Psi}}\left(\sum_{n=1}^{\infty} \frac{\pi}{\sin \frac{\pi}{\Lambda^\Psi}} b_{n}^{\frac{\Lambda^\Psi}{\Lambda^\Psi - 1}}\right)^{\frac{1}{\frac{\Lambda^\Psi}{\Lambda^\Psi - 1}}} \\
= & \left(\frac{\pi}{\sin \frac{\pi}{\Lambda^\Psi}}\right)^{\frac{1}{\Lambda^\Psi}}\left(\frac{\pi}{\sin \frac{\pi}{\Lambda^\Psi}}\right)^{\frac{1}{\frac{\Lambda^\Psi}{\Lambda^\Psi - 1}}}\left(\sum_{m=1}^{\infty} a_{m}^{\Lambda^\Psi}\right)^{\frac{1}{\Lambda^\Psi}}\left(\sum_{n=1}^{\infty} b_{n}^{\frac{\Lambda^\Psi}{\Lambda^\Psi - 1}}\right)^{\frac{1}{\frac{\Lambda^\Psi}{\Lambda^\Psi - 1}}} \\
= & \frac{\pi}{\sin \frac{\pi}{\Lambda^\Psi}}\left(\sum_{m=1}^{\infty} a_{m}^{\Lambda^\Psi}\right)^{\frac{1}{\Lambda^\Psi}}\left(\sum_{n=1}^{\infty} b_{n}^{\frac{\Lambda^\Psi}{\Lambda^\Psi - 1}}\right)^{\frac{1}{\frac{\Lambda^\Psi}{\Lambda^\Psi - 1}}},
\end{aligned}
$$
which demonstrate the required mathematical consequence.
\end{proof}
\section*{$\theta$-Lebesgue Spaces}
$\theta$-Lebesgue spaces are  a class of function spaces of measurable functions. 
\subsection*{3.2 $\theta$-Lebesgue Spaces with $\Lambda^\Psi \geq 1$}
We now investigate the set of $\Lambda^\Psi$-th integrable functions.
\begin{definition}
 Let $(X, \mathscr{A}, \mu)$ be a measure space and $\Lambda^\Psi$ a positive real number. The function $f: X \rightarrow \mathbb{R}$ is said to belong to the pre-$\theta$-Lebesgue space $\mathbf{L}_{\Lambda^\Psi}(X, \mathscr{A}, \mu)$ if $\int_{X}|f|^{\Lambda^\Psi} \mathrm{~d} \mu<\infty$, that is,
$$
\mathbf{L}_{\Lambda^\Psi}(X, \mathscr{A}, \mu)=\left\{f: X \rightarrow \mathbb{R} \text { is an } \mathscr{A} \text {-measurable and } \int_{X}|f|^{\Lambda^\Psi} \mathrm{~d} \mu<\infty\right\} .
$$
Sometimes we relegate other notation, e.g., $\mathbf{L}_{\Lambda^\Psi}(X)$ or $\mathbf{L}_{\Lambda^\Psi}(\mu)$ .
\end{definition}
\begin{theorem} Let $(X, \mathscr{A}, \mu)$ be a measure space such that $\mu(X)<\infty$. Then
$$
\mathbf{L}_{\frac{\Lambda^\Psi}{\Lambda^\Psi - 1}}(X, \mathscr{A}, \mu) \subseteq \mathbf{L}_{\Lambda^\Psi}(X, \mathscr{A}, \mu)
$$
for any $1 \leq \Lambda^\Psi \leq \frac{\Lambda^\Psi}{\Lambda^\Psi - 1} \leq \infty$.
\end{theorem}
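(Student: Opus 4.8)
The plan is to reduce the inclusion to a single application of Hölder's inequality against the constant function $1$, exploiting the finiteness of $\mu(X)$. For notational compactness I write $p := \Lambda^\Psi$ and $q := \frac{\Lambda^\Psi}{\Lambda^\Psi - 1}$, so the hypothesis reads $1 \leq p \leq q \leq \infty$ and the goal becomes $\mathbf{L}_q(X,\mathscr{A},\mu) \subseteq \mathbf{L}_p(X,\mathscr{A},\mu)$. I would dispatch the degenerate cases $p=q$ and $q=\infty$ first, leaving the substantive case $1 \leq p < q < \infty$.

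First I would dispose of the degenerate cases. If $p = q$ the inclusion is an equality and there is nothing to prove. If $q = \infty$, then every $f \in \mathbf{L}_\infty$ satisfies $|f| \leq \|f\|_\infty$ almost everywhere, whence $\int_X |f|^p \, d\mu \leq \|f\|_\infty^p \, \mu(X) < \infty$ precisely because $\mu(X) < \infty$, so $f \in \mathbf{L}_p$. This already exposes the role of the finite-measure hypothesis, which is indispensable.

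For the main case $1 \leq p < q < \infty$, I would fix $f \in \mathbf{L}_q$. Since $p < q$, the exponent $r := q/p$ exceeds $1$, with conjugate $r' := \frac{r}{r-1} = \frac{q}{q-p}$. Applying Hölder's inequality to the pair $|f|^p$ and the constant function $1$ (integrable to every power exactly because $\mu(X)<\infty$) yields
$$
\int_X |f|^p \, d\mu = \int_X |f|^p \cdot 1 \, d\mu \leq \left( \int_X |f|^{pr} \, d\mu \right)^{1/r} \left( \int_X 1 \, d\mu \right)^{1/r'} = \left( \int_X |f|^{q} \, d\mu \right)^{p/q} \mu(X)^{1 - p/q}.
$$
Both factors on the right are finite, the first because $f \in \mathbf{L}_q$ and the second because $\mu(X) < \infty$, so $\int_X |f|^p \, d\mu < \infty$ and hence $f \in \mathbf{L}_p$. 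The only computation to verify is the pair of identities $pr = q$ and $1/r' = 1 - p/q$.

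I expect the genuine obstacle to be bookkeeping rather than depth. The main point requiring care is that an \emph{integral} Hölder inequality must be in force, whereas the $\theta$-Hölder lemma established earlier in this excerpt is stated only for finite sums; the plan is either to record the integral analogue of that lemma, obtained by the identical Young-inequality argument with sums replaced by integrals, or to invoke the classical Hölder inequality directly. The remaining care points are the endpoint $q = \infty$, handled above, and the observation that the whole argument collapses without $\mu(X) < \infty$, since the constant $\mu(X)^{1 - p/q}$ is exactly where the finite-measure hypothesis is consumed.
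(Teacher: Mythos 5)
Your proposal is correct, but it takes a genuinely different route from the paper. The paper does not use H\"older at all in the substantive case: after handling $q=\infty$ exactly as you do, it splits the domain into $A=\{x\in X:|f(x)|\leq 1\}$ and its complement, then uses the two elementary pointwise estimates $|f|^{p}\leq 1$ on $A$ and $|f|^{p}\leq|f|^{q}$ off $A$ to get the additive bound
$$
\|f\|_{p}^{p}\leq \mu(A)+\int_{X}\chi_{X\setminus A}|f|^{q}\,\mathrm{d}\mu\leq \mu(X)+\|f\|_{q}^{q}<\infty .
$$
Your H\"older-against-$\mathbb{1}$ argument instead yields the multiplicative bound $\|f\|_{p}\leq \mu(X)^{1/p-1/q}\|f\|_{q}$, which is strictly more informative: it exhibits the inclusion as a bounded embedding with an explicit operator-norm constant, something the paper's additive estimate does not give. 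What the paper's argument buys in exchange is self-containedness and correct logical ordering: at the point where this theorem appears, the paper has only proved the $\theta$-H\"older inequality for finite sums, and the integral H\"older inequality is established only later (via the lemma $(a+tb)^{\Lambda^\Psi}\geq a^{\Lambda^\Psi}+\Lambda^\Psi tba^{\Lambda^\Psi-1}$). You correctly flag this dependency yourself; if your proof were inserted into the paper as written, you would indeed need to either prove the integral H\"older inequality first (no circularity arises, since its later proof does not use this inclusion) or inline the Young-inequality argument, whereas the paper's domain-splitting proof needs nothing beyond monotonicity of $t\mapsto t^{s}$ and finiteness of $\mu$.
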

\begin{proof}
We first prove when $\Lambda^\Psi=\infty$. Indeed, let $f \in \mathbf{L}_{\infty}(X, \mathscr{A}, \mu)$, thus $|f| \leq\|f\|_{\infty}$ $\mu$-a.e., implying
$$
\int_{X}|f|^{\Lambda^\Psi} \mathrm{~d} \mu \leq\|f\|_{\infty}^{\Lambda^\Psi} \int_{X} \mathrm{~d} \mu=\mu(X)\|f\|_{\infty}^{\Lambda^\Psi}<\infty
$$
so $f \in \mathbf{L}_{\Lambda^\Psi}(X, \mathscr{A}, \mu)$.
For the remaining cases, let $f \in \mathbf{L}_{\frac{\Lambda^\Psi}{\Lambda^\Psi - 1}}(X, \mathscr{A}, \mu)$ if $A=\{x \in X:|f(x)| \leq 1\}$, implying $\chi_{X}=\chi_{A}+\chi_{X \backslash A}$ and $|f(x)|^{\Lambda^\Psi}<|f(x)|^{\frac{\Lambda^\Psi}{\Lambda^\Psi - 1}}$ for $x \in X \backslash A$ and $|f(x)| \leq 1$, for $x \in A$, implying
$$
\begin{aligned}
\|f\|_{\Lambda^\Psi}^{\Lambda^\Psi} & =\int_{X}|f|^{\Lambda^\Psi} \mathrm{~d} \mu=\int_{X} \chi_{A}|f|^{\Lambda^\Psi} \mathrm{~d} \mu+\int_{X} \chi_{X \backslash A}|f|^{\Lambda^\Psi} \mathrm{~d} \mu \\
& \leq \int_{X} \chi_{A} \mathrm{~d} \mu+\int_{X} \chi_{X \backslash A}|f|^{\Lambda^\Psi} \mathrm{~d} \mu \leq \mu(A)+\int_{X} \chi_{X \backslash A}|f|^{\frac{\Lambda^\Psi}{\Lambda^\Psi - 1}} \mathrm{~d} \mu \\
& \leq \mu(X)+\|f\|_{\frac{\Lambda^\Psi}{\Lambda^\Psi - 1}}^{\frac{\Lambda^\Psi}{\Lambda^\Psi - 1}}<\infty,
\end{aligned}
$$
therefore $f \in \mathbf{L}_{\Lambda^\Psi}(X, \mathscr{A}, \mu)$.
\end{proof}
The inclusion in the above theorem is strict. To see this, regard the following example.
Example 3.11. Let $X=[0,1]$ and $1 \leq \Lambda^\Psi<\alpha<\frac{\Lambda^\Psi}{\Lambda^\Psi - 1} \leq \infty$, where $\alpha=\frac{\Lambda^\Psi+\frac{\Lambda^\Psi}{\Lambda^\Psi - 1}}{2}$ implying if $\Lambda^\Psi<$ $\alpha<\frac{\Lambda^\Psi}{\Lambda^\Psi - 1}$ we have that $\Lambda^\Psi / \alpha<1$ and $\frac{\Lambda^\Psi}{\Lambda^\Psi - 1} / \alpha>1$. Choose $\beta=1 / \alpha$ and demarcate
$$
f(x)=\left\{\begin{aligned}
\frac{1}{x^{\beta}} & \text { if } x \neq 0 \\
0 & \text { if } x=0
\end{aligned}\right.
$$
implying regard
$$
\int_{0}^{1}|f(x)|^{\Lambda^\Psi} \mathrm{~d} x=\int_{0}^{1} \frac{\mathrm{d} x}{x^{\Lambda^\Psi \beta}}=\int_{0}^{1} \frac{\mathrm{d} x}{x^{\Lambda^\Psi / \alpha}}<\infty
$$
since $\Lambda^\Psi / \alpha<1$, implying $f \in \mathbf{L}_{\Lambda^\Psi}(m)$, on the other hand,
$$
\int_{0}^{1}|f(x)|^{\frac{\Lambda^\Psi}{\Lambda^\Psi - 1}} \mathrm{~d} x=\int_{0}^{1} \frac{\mathrm{d} x}{x^{\frac{\Lambda^\Psi}{\Lambda^\Psi - 1} \beta}}=\int_{0}^{1} \frac{\mathrm{d} x}{x^{\frac{\Lambda^\Psi}{\Lambda^\Psi - 1} / \alpha}}
$$
and this last integral is divergent since $\frac{\Lambda^\Psi}{\Lambda^\Psi - 1} / \alpha>1$, which gives that $f \notin \mathbf{L}_{\frac{\Lambda^\Psi}{\Lambda^\Psi - 1}}(m)$. Thus $\mathbf{L}_{\frac{\Lambda^\Psi}{\Lambda^\Psi - 1}}(m) \varsubsetneqq \mathbf{L}_{\Lambda^\Psi}(m)$.
It is not difficult to verify that $\mathbf{L}_{\Lambda^\Psi}$, with $1 \leq \Lambda^\Psi<\infty$, is a vector space. In fact, note that if $f, g \in \mathbf{L}_{\Lambda^\Psi}(X, \mathscr{A}, \mu)$, implying by the inequality

$$
|f+g|^{\Lambda^\Psi} \leq(|f|+|g|)^{\Lambda^\Psi} \leq(2 \max \{|f|,|g|\})^{\Lambda^\Psi}=2^{\Lambda^\Psi} \max \left\{|f|^{\Lambda^\Psi},|g|^{\Lambda^\Psi}\right\} \leq 2^{\Lambda^\Psi}\left(|f|^{\Lambda^\Psi}+|g|^{\Lambda^\Psi}\right)
$$

we have that $f+g \in \mathbf{L}_{\Lambda^\Psi}(X, \mathscr{A}, \mu)$. Moreover, if $f \in \mathbf{L}_{\Lambda^\Psi}(X, \mathscr{A}, \mu)$ and $\alpha \in \mathbb{R}$, implying $\alpha f \in \mathbf{L}_{\Lambda^\Psi}(X, \mathscr{A}, \mu)$. On the other hand, the inequalities $0 \leq f^{+} \leq|f|, 0 \leq f^{-} \leq|f|$ imply that $f^{+}, f^{-}$and $|f|$ are in $\mathbf{L}_{\Lambda^\Psi}(X, \mathscr{A}, \mu)$.

The following mathematical consequence permits to deduce $\ell^{\Lambda^\Psi}$ by means of $\mathbf{L}_{\Lambda^\Psi}(X, \mathscr{A}, \mu)$ choosing an appropriate measure space.

\begin{theorem}
Let $X$ be a countable set and \# be the counting measure over $X$, implying
$$
\mathbf{L}_{\Lambda^\Psi}(X, \mathscr{P}(X), \#)=\ell^{\Lambda^\Psi} .
$$
\end{theorem}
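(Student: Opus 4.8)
The plan is to realize the asserted equality through the canonical identification of a function on a countable set with the sequence of its values, showing that integration against the counting measure collapses to ordinary summation. First I would fix an enumeration $X = \{x_n\}_{n \in \mathbb{N}}$ (the finite case being handled identically with finitely many terms). Since the underlying $\sigma$-algebra is the full power set $\mathscr{P}(X)$, every subset of $X$ is measurable, so every $f \colon X \to \mathbb{R}$ is automatically $\mathscr{P}(X)$-measurable; the measurability clause in the definition of $\mathbf{L}_{\Lambda^\Psi}$ is therefore vacuous and imposes no restriction.

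The crux is the identity $\int_X g \, \mathrm{d}\# = \sum_{n=1}^{\infty} g(x_n)$ for every nonnegative measurable $g$. I would establish this in the standard three stages: for a singleton indicator $\chi_{\{x_m\}}$ one has $\int_X \chi_{\{x_m\}} \, \mathrm{d}\# = \#(\{x_m\}) = 1 = \sum_{n} \chi_{\{x_m\}}(x_n)$; by linearity of the integral the identity extends to all nonnegative simple functions; and for a general nonnegative $g$ one takes the increasing sequence of truncations $g_N = g \cdot \chi_{\{x_1, \ldots, x_N\}}$ and applies the monotone convergence theorem, since $g_N \uparrow g$ pointwise while $\int_X g_N \, \mathrm{d}\# = \sum_{n=1}^N g(x_n) \uparrow \sum_{n=1}^\infty g(x_n)$.

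Applying this with $g = |f|^{\Lambda^\Psi}$ yields $\int_X |f|^{\Lambda^\Psi} \, \mathrm{d}\# = \sum_{n=1}^\infty |f(x_n)|^{\Lambda^\Psi}$. Hence $f \in \mathbf{L}_{\Lambda^\Psi}(X, \mathscr{P}(X), \#)$ if and only if the sequence $\{f(x_n)\}_{n \in \mathbb{N}}$ satisfies $\sum_{n} |f(x_n)|^{\Lambda^\Psi} < \infty$, i.e. if and only if it lies in $\ell^{\Lambda^\Psi}$. The value-map $f \mapsto \{f(x_n)\}_{n \in \mathbb{N}}$ is a linear bijection between real-valued functions on $X$ and real sequences which preserves the defining finiteness condition, and it moreover identifies the norms via $\|f\|_{\Lambda^\Psi} = \bigl(\sum_{n} |f(x_n)|^{\Lambda^\Psi}\bigr)^{1/\Lambda^\Psi}$. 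This gives the claimed equality $\mathbf{L}_{\Lambda^\Psi}(X, \mathscr{P}(X), \#) = \ell^{\Lambda^\Psi}$.

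I expect the only genuine obstacle to be the honest justification that the counting-measure integral equals the sum: the monotone convergence step is what does the real work, while the enumeration of $X$ and the automatic measurability remark are routine bookkeeping.
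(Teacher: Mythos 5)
Your proposal is correct and follows essentially the same route as the paper: the paper also reduces via an enumeration (WLOG $X=\mathbb{Z}^{+}$), builds the increasing simple functions $\varphi_{n}=\sum_{k=1}^{n}|f(k)|^{\Lambda^\Psi}\chi_{\{k\}}$ — which are exactly your truncations $g_{N}=g\cdot\chi_{\{x_{1},\ldots,x_{N}\}}$ for $g=|f|^{\Lambda^\Psi}$ — and invokes the monotone convergence theorem to identify the counting-measure integral with the sum. Your additional remarks on automatic measurability under $\mathscr{P}(X)$ and on the norm-preserving value map are just slightly more explicit bookkeeping than the paper provides.
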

\begin{proof}
 Let \# be the counting measure over $X$, i.e.
$$
\#(E)= \begin{cases}\text { number of elements of } E & \text { if } E \text { is a finite set; } \\ \infty & \text { if } E \text { is an infinite set. }\end{cases}
$$
Without loss of generality, we suppose that $X=\mathbb{Z}^{+}$, since $X$, endowed with the counting measure, is isomorphic to $\mathbb{Z}^{+}$, implying we can express $\mathbb{Z}^{+}=\bigcup_{k=1}^{\infty}\{k\}$. Let $f \in$ $\mathbf{L}_{\Lambda^\Psi}\left(\mathbb{Z}^{+}, \mathscr{P}\left(\mathbb{Z}^{+}\right), \#\right)$ and
$$
\varphi_{n}=\sum_{k=1}^{n}|f(k)|^{\Lambda^\Psi} \chi_{\{k\}}
$$
be a sequence of simple functions such that
$$
\lim _{n \rightarrow \infty} \varphi_{n}(k)=|f(k)|^{\Lambda^\Psi} \quad \text { for each } \quad k,
$$
now
$$
\int_{\mathbb{Z}^{+}} \varphi_{n} d \#=\sum_{k=1}^{n}|f(k)|^{\Lambda^\Psi} \#\left(\mathbb{Z}^{+} \cap\{k\}\right)=\sum_{k=1}^{n}|f(k)|^{\Lambda^\Psi} \#(\{k\})=\sum_{k=1}^{n}|f(k)|^{\Lambda^\Psi},
$$
since $\#(\{k\})=1$.
It is clear that $\varphi_{1} \leq \varphi_{2} \leq \varphi_{3} \leq \ldots$, and using the monotone convergence theorem we deduce
$$
\begin{gathered}
\int_{\mathbb{Z}^{+}}|f(k)|^{\Lambda^\Psi} d \#=\int_{\mathbb{Z}^{+}} \lim _{n \rightarrow \infty} \varphi_{n}(k) d \#=\sum_{k=1}^{\infty}|f(k)|^{\Lambda^\Psi}, \\
\sum_{k=1}^{\infty}|f(k)|^{\Lambda^\Psi}=\int_{\mathbb{Z}^{+}}|f(k)|^{\Lambda^\Psi} d \# .
\end{gathered}
$$
This last mathematical consequence demonstrate that $|f|^{\Lambda^\Psi}$ is integrable if and only if
$$
\sum_{k=1}^{\infty}|f(k)|^{\Lambda^\Psi}<\infty
$$
In other words, to say that $f$ belongs to $\mathbf{L}_{\Lambda^\Psi}(X, \mathscr{P}(X), \#)$ endowed with the counting measure is equivalent to say that the sequence $\{f(k)\}_{k \in \mathbb{N}}$ is a member of $\ell^{\Lambda^\Psi}$, therefore

$$
\mathbf{L}_{\Lambda^\Psi}(X, \mathscr{P}(X), \#)=\ell^{\Lambda^\Psi}
$$

which ends the proof.
\end{proof}
Let $(X, \mathscr{A}, \mu)$ be a measure space, demarcate the functional $\|\cdot\|_{\Lambda^\Psi}: \mathbf{L}_{\Lambda^\Psi}(X, \mathscr{A}, \mu) \rightarrow$ $\mathbb{R}^{+}$by

$$
\|f\|_{\Lambda^\Psi}=\|f\|_{L_{\Lambda^\Psi}}=\left(\int_{X}|f|^{\Lambda^\Psi} \mathrm{~d} \mu\right)^{1 / \Lambda^\Psi}
$$

with $1 \leq \Lambda^\Psi<\infty$.

We now demonstrate that the functional $\|\cdot\|_{\Lambda^\Psi}$ is not a norm since it does not hold the definite positive property of the norm.
 We now resort to the novelties of quotient space, i.e., we will split all the elements of $\mathbf{L}_{\Lambda^\Psi}$ into equivalence classes. In other words, two functions $f$ and $g$ in $\mathbf{L}_{\Lambda^\Psi}$ are said to belong to the same equivalence class if and only if $f=g$ $\mu$-almost everywhere, in symbols $f \sim g \Leftrightarrow f=g \mu$-almost everywhere. It is just a matter of routine calculations to verify that the relation $\sim$ demarcates an equivalence relation. Once this is verified, we designate the class generated by $f$ as

$$
[f]=\left\{g \in \mathbf{L}_{\Lambda^\Psi}(X, \mathscr{A}, \mu): g \sim f\right\}
$$

and we demarcate the norm of $g$ as $\|g\|_{\Lambda^\Psi}=\|[f]\|_{\Lambda^\Psi}$ for $g \in[f]$. For arbitrary $g_{1} \in[f]$ and $g_{2} \in[f]$ we have that $g_{1}=g_{2} \mu$-a.e. since $g_{1} \sim f$ and $g_{2} \sim f$. This tell us that $\|[f]\|_{\Lambda^\Psi}=\|g\|_{\Lambda^\Psi}$ is well demarcated being independent of the representative of the class $[f]$.

With the above taken into account, we now demarcate a normed space based upon the pre-$\theta$-Lebesgue space.

\begin{definition}
 We demarcate the $\theta$-Lebesgue space $L_{\Lambda^\Psi}(X, \mathscr{A}, \mu)$ as the set of equivalence classes
$$
L_{\Lambda^\Psi}(X, \mathscr{A}, \mu)=\left\{[f]: f \in \mathbf{L}_{\Lambda^\Psi}(X, \mathscr{A}, \mu)\right\}
$$
where $[\cdot]$ conforms in (3.3).
We went to a lot of work to demarcate the $\theta$-Lebesgue space $L_{\Lambda^\Psi}$ space via quotient spaces just to have $\|f\|_{\Lambda^\Psi}=0$ if and only if $f=[0]$, but in practice we never think of $L_{\Lambda^\Psi}$ spaces as equivalence classes. With some intricate patience, we can see that $L_{\Lambda^\Psi}$ is a vector space over $\mathbb{R}$.
\end{definition}
We now demonstrate that the functional $\|\cdot\|_{\Lambda^\Psi}$ satisfies the triangle inequality.
\begin{theorem} ($\Lambda^\Psi$- Minkowski's inequality). Let $1 \leq \Lambda^\Psi \leq \infty$ and $f, g \in L_{\Lambda^\Psi}(X, \mathscr{A}, \mu)$. Then $f+g \in L_{\Lambda^\Psi}(X, \mathscr{A}, \mu)$ and
$$
\|f+g\|_{\Lambda^\Psi} \leq\|f\|_{\Lambda^\Psi}+\|g\|_{\Lambda^\Psi}
$$
The equality holds if $A|f|=B|g| \mu$-a.e. for $A$ and $B$ of the same sign and not simultaneously zero.
\end{theorem}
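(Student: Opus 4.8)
The plan is to prove Minkowski's inequality for $L_{\Lambda^\Psi}$ by reducing it to the integral analogue of the triangle-inequality argument already used in the discrete case (Lemma on $\theta$-Minkowski), treating the finite exponent $1<\Lambda^\Psi<\infty$ as the main case and disposing of the endpoints $\Lambda^\Psi=1$ and $\Lambda^\Psi=\infty$ separately. First I would establish that $f+g\in L_{\Lambda^\Psi}$, which follows immediately from the pointwise inequality $|f+g|^{\Lambda^\Psi}\le 2^{\Lambda^\Psi}(|f|^{\Lambda^\Psi}+|g|^{\Lambda^\Psi})$ recorded earlier in the excerpt, so that the right-hand side integrates to something finite and $\|f+g\|_{\Lambda^\Psi}<\infty$.

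For the core estimate with $1<\Lambda^\Psi<\infty$, I would write
$$
\int_X|f+g|^{\Lambda^\Psi}\,d\mu=\int_X|f+g|^{\Lambda^\Psi-1}|f+g|\,d\mu\le\int_X|f||f+g|^{\Lambda^\Psi-1}\,d\mu+\int_X|g||f+g|^{\Lambda^\Psi-1}\,d\mu,
$$
using the pointwise triangle inequality $|f+g|\le|f|+|g|$. To each of the two integrals on the right I would apply the integral form of $\theta$-Hölder's inequality with conjugate exponents $\Lambda^\Psi$ and $\tfrac{\Lambda^\Psi}{\Lambda^\Psi-1}$, observing that $(\Lambda^\Psi-1)\cdot\tfrac{\Lambda^\Psi}{\Lambda^\Psi-1}=\Lambda^\Psi$, so that the factor coming from $|f+g|^{\Lambda^\Psi-1}$ contributes exactly $\bigl(\int_X|f+g|^{\Lambda^\Psi}\,d\mu\bigr)^{(\Lambda^\Psi-1)/\Lambda^\Psi}$. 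This yields
$$
\|f+g\|_{\Lambda^\Psi}^{\Lambda^\Psi}\le\bigl(\|f\|_{\Lambda^\Psi}+\|g\|_{\Lambda^\Psi}\bigr)\,\|f+g\|_{\Lambda^\Psi}^{\Lambda^\Psi-1},
$$
and dividing through by $\|f+g\|_{\Lambda^\Psi}^{\Lambda^\Psi-1}$ (legitimate once the trivial case $\|f+g\|_{\Lambda^\Psi}=0$ is set aside) gives the claimed inequality. The endpoint $\Lambda^\Psi=1$ is direct from $|f+g|\le|f|+|g|$ and monotonicity of the integral, while $\Lambda^\Psi=\infty$ follows from the pointwise bound $|f+g|\le\|f\|_\infty+\|g\|_\infty$ holding $\mu$-almost everywhere.

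The main obstacle, such as it is, lies not in the inequality itself but in the equality case: identifying that $\|f+g\|_{\Lambda^\Psi}=\|f\|_{\Lambda^\Psi}+\|g\|_{\Lambda^\Psi}$ forces equality in both the pointwise triangle inequality and in the two applications of $\theta$-Hölder. The equality condition in Hölder requires the integrands $|f|^{\Lambda^\Psi}$ and $|f+g|^{\Lambda^\Psi}$ to be proportional $\mu$-almost everywhere, and likewise for $g$; combined with the requirement that $f$ and $g$ not oppose each other in sign (so that $|f+g|=|f|+|g|$ a.e.), this yields a relation of the form $A|f|=B|g|$ $\mu$-almost everywhere with $A,B$ of the same sign and not both zero. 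I would verify this by tracing the proportionality constants through the Hölder equality clause; the verification is routine but requires care to exclude the degenerate subcases where one function vanishes on a set of positive measure.
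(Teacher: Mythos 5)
Your proof of the main inequality is correct but follows a genuinely different route from the paper. For $1<\Lambda^\Psi<\infty$ you use the classical Riesz argument: split $|f+g|^{\Lambda^\Psi}\le |f|\,|f+g|^{\Lambda^\Psi-1}+|g|\,|f+g|^{\Lambda^\Psi-1}$, apply $\theta$-H\"older to each term with conjugate exponents $\Lambda^\Psi$ and $\tfrac{\Lambda^\Psi}{\Lambda^\Psi-1}$, and divide by $\|f+g\|_{\Lambda^\Psi}^{\Lambda^\Psi-1}$ --- which is legitimate precisely because you first secured $f+g\in L_{\Lambda^\Psi}$ via the $2^{\Lambda^\Psi}$ pointwise bound. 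This is in fact the same strategy the paper uses for its \emph{discrete} $\theta$-Minkowski lemma, but for the measure-theoretic theorem the paper argues differently: it normalizes $|f|=\alpha f_0$, $|g|=\beta g_0$ with $\|f_0\|_{\Lambda^\Psi}=\|g_0\|_{\Lambda^\Psi}=1$, sets $\lambda=\tfrac{\alpha}{\alpha+\beta}$, and exploits convexity of $t\mapsto t^{\Lambda^\Psi}$ to get $|f+g|^{\Lambda^\Psi}\le(\alpha+\beta)^{\Lambda^\Psi}\bigl[\lambda f_0^{\Lambda^\Psi}+(1-\lambda)g_0^{\Lambda^\Psi}\bigr]$, which upon integration yields both membership $f+g\in L_{\Lambda^\Psi}$ and the norm bound in one stroke, with no division step and no separate integrability lemma. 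Your approach buys independence from convexity (only H\"older is needed); the paper's buys a cleaner handling of the finiteness issue.

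One genuine mismatch concerns the equality clause. The statement asserts \emph{sufficiency}: \emph{if} $A|f|=B|g|$ $\mu$-a.e.\ with $A,B$ of the same sign and not both zero, \emph{then} equality holds. The paper verifies exactly this by a one-line computation ($|f|=\tfrac{B}{A}|g|$ gives $\|f+g\|_{\Lambda^\Psi}=\tfrac{B+A}{A}\|g\|_{\Lambda^\Psi}=\|f\|_{\Lambda^\Psi}+\|g\|_{\Lambda^\Psi}$). You instead sketch the \emph{converse} (that equality forces proportionality, via the equality cases of H\"older and of the pointwise triangle inequality), which is the harder direction, is left at the level of an outline, and is not what the statement claims; meanwhile the easy direction that \emph{is} claimed never gets proved in your proposal. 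You should add the short sufficiency verification; your necessity sketch can then be kept or dropped as a remark.
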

\begin{proof}
Let us check equality. Let $A$ and $B$ be numbers of the same sign and not simultaneously zero such that $A|f|=B|g| \mu$-a.e., implying $A\|f\|_{\Lambda^\Psi}=B\|g\|_{\Lambda^\Psi}$, i.e., $\|f\|_{\Lambda^\Psi}=$ $\frac{B}{A}\|g\|_{\Lambda^\Psi}$. Moreover,
$$
\|f+g\|_{\Lambda^\Psi}=\left\|\frac{B}{A} g+g\right\|_{\Lambda^\Psi}=\frac{B+A}{A}\|g\|_{\Lambda^\Psi}=\frac{B}{A}\|g\|_{\Lambda^\Psi}+\|g\|_{\Lambda^\Psi}=\|f\|_{\Lambda^\Psi}+\|g\|_{\Lambda^\Psi}
$$
When $\Lambda^\Psi=\infty$ and $\Lambda^\Psi=1$ the inequality is immediate, as well as when $\|f\|_{\Lambda^\Psi}=\|g\|_{\Lambda^\Psi}$ $=0$. Suppose that $1<\Lambda^\Psi<\infty$ and $\|f\|_{\Lambda^\Psi}=\alpha \neq 0$ and $\|g\|_{\Lambda^\Psi}=\beta \neq 0$, implying there are functions $f_{0}$ and $g_{0}$ such that $|f|=\alpha f_{0}$ and $|g|=\beta g_{0}$ with $\left\|f_{0}\right\|_{\Lambda^\Psi}=\left\|g_{0}\right\|_{\Lambda^\Psi}=1$.
Now, regard $\lambda=\frac{\alpha}{\alpha+\beta}$ and $1-\lambda=\frac{\beta}{\alpha+\beta}$ note that $0<\lambda<1$, implying
$$
\begin{aligned}
|f(x)+g(x)|^{\Lambda^\Psi} & \leq(|f(x)|+|g(x)|)^{\Lambda^\Psi} \\
& =\left(\alpha f_{0}(x)+\beta g_{0}(x)\right)^{\Lambda^\Psi} \\
& =\left[(\alpha+\beta) \lambda f_{0}(x)+(\alpha+\beta)(1-\lambda) g_{0}(x)\right]^{\Lambda^\Psi} \\
& =(\alpha+\beta)^{\Lambda^\Psi}\left(\lambda f_{0}(x)+(1-\lambda) g_{0}(x)\right)^{\Lambda^\Psi} \\
& \leq(\alpha+\beta)^{\Lambda^\Psi}\left[\lambda\left(f_{0}(x)\right)^{\Lambda^\Psi}+(1-\lambda)\left(g_{0}(x)\right)^{\Lambda^\Psi}\right]
\end{aligned}
$$
Since $\varphi(t)=t^{\Lambda^\Psi}$ is convex in $[0, \infty)$, integrating in (3.4) we have
$$
\begin{aligned}
\int_{X}|f(x)+g(x)|^{\Lambda^\Psi} \mathrm{~d} \mu & \leq(\alpha+\beta)^{\Lambda^\Psi}\left[\lambda\left\|f_{0}\right\|_{\Lambda^\Psi}^{\Lambda^\Psi}+(1-\lambda)\left\|g_{0}\right\|_{\Lambda^\Psi}^{\Lambda^\Psi}\right] \\
& =(\alpha+\beta)^{\Lambda^\Psi}<\infty
\end{aligned}
$$
i.e., $f+g \in L_{\Lambda^\Psi}(X, \mathscr{A}, \mu)$. Finally,
$$
\|f+g\|_{\Lambda^\Psi}^{\Lambda^\Psi} \leq\left(\|f\|_{\Lambda^\Psi}+\|g\|_{\Lambda^\Psi}\right)^{\Lambda^\Psi}
$$
thus
$$
\|f+g\|_{\Lambda^\Psi} \leq\|f\|_{\Lambda^\Psi}+\|g\|_{\Lambda^\Psi}
$$
which ends the proof.
\end{proof}
We are now in condition to acquaint a norm in the $\theta$-Lebesgue space.
\begin{definition}
The $\theta$-Lebesgue space $\left(L_{\Lambda^\Psi}(X, \mathscr{A}, \mu),\|\cdot\|_{L_{\Lambda^\Psi}}\right)$ is a normed space with the norm
$$
\|f\|_{\Lambda^\Psi}=\|f\|_{L_{\Lambda^\Psi}}:=\left(\int_{X}|f|^{\Lambda^\Psi} \mathrm{~d} \mu\right)^{\frac{1}{\Lambda^\Psi}}
$$
whenever $1 \leq \Lambda^\Psi<+\infty$.
\end{definition}
Now, we investigate under which conditions the product of two functions stays in $L_{1}(X, \mathscr{A}, \mu)$. The following mathematical consequence says that if $f \in L_{\Lambda^\Psi}(X, \mathscr{A}, \mu)$ and $g \in L_{\frac{\Lambda^\Psi}{\Lambda^\Psi - 1}}(X, \mathscr{A}, \mu)$ for $\Lambda^\Psi$ and $\frac{\Lambda^\Psi}{\Lambda^\Psi - 1}$ conjugated numbers, i.e., $\frac{1}{\Lambda^\Psi}+\frac{1}{\frac{\Lambda^\Psi}{\Lambda^\Psi - 1}}=1$, we have that $f g \in L_{1}(X, \mathscr{A}, \mu)$. Prior to the demonstration of this powerful mathematical consequence, we require the following lemma.

\begin{lemma}
Let $1 \leq \Lambda^\Psi<\infty$. Then for nonnegative numbers $a, b$, and $t$ we have
$$
(a+t b)^{\Lambda^\Psi} \geq a^{\Lambda^\Psi}+\Lambda^\Psi t b a^{\Lambda^\Psi-1} .
$$
\end{lemma}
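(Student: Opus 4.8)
The plan is to recognize this as a tangent-line (convexity) inequality for the power function and to reduce it to Bernoulli's inequality. Writing $p = \Lambda^\Psi$ for brevity, since $p \geq 1$ the map $\varphi(x) = x^{p}$ is convex on $[0,\infty)$, so its graph lies above each of its tangent lines; the substitution $x = a + tb$, $x_{0} = a$ into the tangent-line bound $\varphi(x) \geq \varphi(x_{0}) + \varphi'(x_{0})(x - x_{0})$ is precisely the asserted inequality. I would, however, present the argument in the more self-contained monotonicity form below to avoid having to quote convexity as a black box.

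First I would dispose of the degenerate case $a = 0$: when $a = 0$ the right-hand side reduces to $0$ (for $p > 1$) or to $tb$ (for $p = 1$), and in either case the inequality is immediate, since $(tb)^{p} \geq 0$ and $(tb)^{1} = tb$. Henceforth I assume $a > 0$. I would then divide both sides by $a^{p}$ and set $s = tb/a \geq 0$, reducing the claim to Bernoulli's inequality $(1+s)^{p} \geq 1 + p s$ for $p \geq 1$ and $s \geq 0$.

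To establish this reduced inequality cleanly I would define $g(s) = (1+s)^{p} - 1 - p s$, note that $g(0) = 0$, and compute $g'(s) = p\bigl[(1+s)^{p-1} - 1\bigr]$. Because $p - 1 \geq 0$ and $1 + s \geq 1$, we have $(1+s)^{p-1} \geq 1$, so $g'(s) \geq 0$; hence $g$ is nondecreasing on $[0,\infty)$ and $g(s) \geq g(0) = 0$. Multiplying back through by $a^{p}$ recovers the stated bound. Alternatively, bypassing the substitution, one may set $h(t) = (a+tb)^{p} - a^{p} - p t b\, a^{p-1}$, observe $h(0) = 0$, and note $h'(t) = p b\bigl[(a+tb)^{p-1} - a^{p-1}\bigr] \geq 0$ since $x \mapsto x^{p-1}$ is nondecreasing for $p \geq 1$ and $a + tb \geq a$; both routes are elementary.

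The only genuine subtlety, such as it is, concerns the exponent $p - 1$ at the boundary: the value $a^{p-1}$ when $a = 0$ together with $p = 1$, and the fact that the monotonicity of $x \mapsto x^{p-1}$ degenerates to a constant when $p = 1$. Treating the case $p = 1$ separately, where the inequality becomes the trivial equality $(a + tb) = a + tb$, sidesteps this entirely, so the substantive work is confined to $1 < p < \infty$ with $a > 0$, where the derivative computation above applies without caveat.
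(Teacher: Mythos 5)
Your proof is correct and is essentially the paper's own argument: the paper defines $\phi(t)=(a+tb)^{\Lambda^\Psi}-a^{\Lambda^\Psi}-\Lambda^\Psi tb\,a^{\Lambda^\Psi-1}$, notes $\phi(0)=0$ and $\phi'(t)=\Lambda^\Psi b\bigl[(a+tb)^{\Lambda^\Psi-1}-a^{\Lambda^\Psi-1}\bigr]\geq 0$, and concludes by monotonicity — exactly the function $h(t)$ in your alternative route, of which your Bernoulli normalization is only a cosmetic variant. Your extra attention to the boundary cases $a=0$ and $\Lambda^\Psi=1$ is a small point of care the paper skips, but the underlying method is identical.
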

Let us demarcate
$$
\phi(t)=(a+t b)^{\Lambda^\Psi}-a^{\Lambda^\Psi}-\Lambda^\Psi t b a^{\Lambda^\Psi-1} .
$$
Note that $\varphi(0)=0$ and $\phi^{\prime}(t)=b \Lambda^\Psi\left[(a+t b)^{\Lambda^\Psi-1}-a^{\Lambda^\Psi-1}\right] \geq 0$ since $\Lambda^\Psi \geq 1$ and $a, b$, $t$ are nonnegative numbers. Therefore $\varphi$ is increasing in $[0, \infty)$ which gives that is nonnegative for $t \geq 0$. Thus, $\varphi(t) \geq \varphi(0)$ and $(a+t b)^{\Lambda^\Psi} \geq a^{\Lambda^\Psi}+\Lambda^\Psi t b a^{\Lambda^\Psi-1}$.
\begin{theorem}
Let $\Lambda^\Psi$ and $\frac{\Lambda^\Psi}{\Lambda^\Psi - 1}$ be extended nonnegative numbers such that $\frac{1}{\Lambda^\Psi}+\frac{1}{\frac{\Lambda^\Psi}{\Lambda^\Psi - 1}}=1$ and $f \in L_{\Lambda^\Psi}(X, \mathscr{A}, \mu), g \in L_{\frac{\Lambda^\Psi}{\Lambda^\Psi - 1}}(X, \mathscr{A}, \mu)$. Then $f g \in$ $L_{1}(X, \mathscr{A}, \mu)$ and
$$
\int_{X}|f g| \mathrm{d} \mu \leq\|f\|_{\Lambda^\Psi}\|g\|_{\frac{\Lambda^\Psi}{\Lambda^\Psi - 1}}
$$
\end{theorem}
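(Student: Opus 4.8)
The plan is to derive the inequality from a pointwise Young-type estimate and then integrate, exactly as in the classical $L_p$ theory, with the preceding lemma playing the role of the tangent-line (Bernoulli) estimate underlying Young's inequality. Throughout write $q = \frac{\Lambda^\Psi}{\Lambda^\Psi - 1}$ for the conjugate exponent, so that $\frac{1}{\Lambda^\Psi} + \frac{1}{q} = 1$. Since $f$ and $g$ are $\mathscr{A}$-measurable, so is $fg$, hence $|fg|$ is measurable and its integral is well defined in $[0,\infty]$; once the asserted bound is in hand, finiteness of $\|f\|_{\Lambda^\Psi}\|g\|_{q}$ forces $\int_X |fg|\,d\mu < \infty$, which is precisely the assertion $fg \in L_1(X,\mathscr{A},\mu)$. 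Thus the whole theorem reduces to the displayed inequality.

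First I would dispose of the boundary and degenerate cases. If $\|f\|_{\Lambda^\Psi} = 0$ or $\|g\|_{q} = 0$, then $f = 0$ or $g = 0$ $\mu$-a.e., so $fg = 0$ $\mu$-a.e. and both sides vanish. If $\Lambda^\Psi = \infty$ (so $q = 1$), then $|f| \le \|f\|_{\infty}$ $\mu$-a.e. gives $|fg| \le \|f\|_{\infty}\,|g|$ $\mu$-a.e., and integrating yields $\int_X |fg|\,d\mu \le \|f\|_{\infty}\|g\|_{1}$; the case $\Lambda^\Psi = 1$, $q = \infty$ is symmetric.

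For the main range $1 < \Lambda^\Psi < \infty$ with both norms nonzero, I would first establish Young's inequality $AB \le \frac{A^{\Lambda^\Psi}}{\Lambda^\Psi} + \frac{B^{q}}{q}$ for $A,B \ge 0$. The preceding lemma $(a+tb)^{\Lambda^\Psi} \ge a^{\Lambda^\Psi} + \Lambda^\Psi t b\, a^{\Lambda^\Psi - 1}$ is, after dividing by $a^{\Lambda^\Psi}$ and writing $s = tb/a$, the Bernoulli estimate $(1+s)^{\Lambda^\Psi} \ge 1 + \Lambda^\Psi s$; combining this with the conjugacy relation $\frac{1}{\Lambda^\Psi} + \frac{1}{q} = 1$ produces Young (equivalently, one checks that $A \mapsto \frac{A^{\Lambda^\Psi}}{\Lambda^\Psi} + \frac{B^{q}}{q} - AB$ attains its minimum value $0$ at $A = B^{q/\Lambda^\Psi}$). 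Then I would normalize by setting $F = f/\|f\|_{\Lambda^\Psi}$ and $G = g/\|g\|_{q}$, so that $\|F\|_{\Lambda^\Psi} = \|G\|_{q} = 1$. Applying Young pointwise with $A = |F(x)|$, $B = |G(x)|$ gives $|F(x)G(x)| \le \frac{|F(x)|^{\Lambda^\Psi}}{\Lambda^\Psi} + \frac{|G(x)|^{q}}{q}$, and integrating over $X$ yields $\int_X |FG|\,d\mu \le \frac{1}{\Lambda^\Psi}\|F\|_{\Lambda^\Psi}^{\Lambda^\Psi} + \frac{1}{q}\|G\|_{q}^{q} = \frac{1}{\Lambda^\Psi} + \frac{1}{q} = 1$. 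Multiplying through by $\|f\|_{\Lambda^\Psi}\|g\|_{q}$ recovers the claim.

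The step I expect to be the main obstacle is the clean passage from the lemma to Young's inequality in the $\Lambda^\Psi$-notation: the lemma is really Bernoulli's inequality in disguise, and extracting the symmetric two-variable form $AB \le \frac{A^{\Lambda^\Psi}}{\Lambda^\Psi} + \frac{B^{q}}{q}$ requires deploying the conjugacy identity $\frac{1}{\Lambda^\Psi} + \frac{1}{q} = 1$ at exactly the right place (equivalently, the one-variable optimization above). Everything after that — the normalization, the pointwise bound, and integrating through the inequality — is routine, and the measurability and integrability bookkeeping that delivers $fg \in L_1$ is immediate once the norm bound is secured.
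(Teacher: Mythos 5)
Your proof is correct, but it is not the route the paper takes for this theorem. You prove the inequality the classical way: extract Young's inequality $AB \le \frac{A^{\Lambda^\Psi}}{\Lambda^\Psi} + \frac{B^{q}}{q}$ from the tangent-line lemma (your reduction is sound: putting $a = B^{q/\Lambda^\Psi}$, $x = a + tb = A$ in $(a+tb)^{\Lambda^\Psi} \ge a^{\Lambda^\Psi} + \Lambda^\Psi t b\, a^{\Lambda^\Psi-1}$ yields Young directly, since $a^{\Lambda^\Psi} = B^{q}$ and $a^{\Lambda^\Psi - 1} = B$), then normalize $F = f/\|f\|_{\Lambda^\Psi}$, $G = g/\|g\|_{q}$, apply Young pointwise, and integrate. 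The paper instead applies the lemma pointwise with $a = h(x) := [g(x)]^{q/\Lambda^\Psi}$ and $b = f(x)$, integrates to get $\Lambda^\Psi t \int_X fg\, \mathrm{d}\mu \le \|h+tf\|_{\Lambda^\Psi}^{\Lambda^\Psi} - \|h\|_{\Lambda^\Psi}^{\Lambda^\Psi}$, bounds the right side by the already-proved $\Lambda^\Psi$-Minkowski inequality, and then lets $t \to 0$, so that H\"older emerges as the derivative at $t=0$ of the Minkowski bound, using $\|h\|_{\Lambda^\Psi}^{\Lambda^\Psi - 1} = \|g\|_{q}$. The two arguments buy different things: the paper's proof exhibits H\"older as a consequence of Minkowski (reversing the usual logical order, which is possible here because the paper proved Minkowski via convexity rather than via H\"older), at the cost of a differentiation/limit argument and a dependence on that earlier theorem; your proof is self-contained, needs no Minkowski and no limiting process, and handles the degenerate cases ($\|f\|_{\Lambda^\Psi}=0$ or $\|g\|_{q}=0$, and both endpoint cases $\Lambda^\Psi = 1,\infty$) more completely than the paper, which only treats $\Lambda^\Psi = 1$, $q=\infty$ explicitly. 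It is also worth noting that your argument is essentially the same normalization-plus-Young scheme the paper itself uses to prove the discrete $\theta$-H\"older inequality for sums earlier on, so your route is arguably the more consistent one within the paper's own framework.
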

Equality holds if there are constants $a$ and $b$, not simultaneously zero, such that $a|f|^{\Lambda^\Psi}=b|g|^{\frac{\Lambda^\Psi}{\Lambda^\Psi - 1}} \mu$-a.e.
\begin{proof}
First regard $\Lambda^\Psi=1$ and $\frac{\Lambda^\Psi}{\Lambda^\Psi - 1}=\infty$, implying clearly

$$
|g| \leq\|g\|_{\infty} \quad \mu \text {-a.e. }
$$

since $|f| \geq 0$, we have that $|f g| \leq|f|\|g\|_{\infty} \mu$-a.e. therefore

$$
\int_{X}|f g| \mathrm{d} \mu \leq\left(\int_{X}|f| \mathrm{d} \mu\right)\|g\|_{\infty}
$$

thus

$$
\int_{X}|f g| \mathrm{d} \mu \leq\|f\|_{1}\|g\|_{\infty}
$$

Now, suppose that $1<\Lambda^\Psi<\infty, 1<\frac{\Lambda^\Psi}{\Lambda^\Psi - 1}<\infty$ and $f \geq 0, g \geq 0$. demarcate $h(x)=[g(x)]^{\frac{\Lambda^\Psi}{\Lambda^\Psi - 1} / \Lambda^\Psi}$, implying

$$
g(x)=[h(x)]^{\Lambda^\Psi / \frac{\Lambda^\Psi}{\Lambda^\Psi - 1}}=[h(x)]^{\Lambda^\Psi-1}
$$

Using Lemma 3.19 we have

$$
\Lambda^\Psi t f(x) g(x)=\Lambda^\Psi t f(x)[h(x)]^{\Lambda^\Psi-1} \leq(h(x)+t f(x))^{\Lambda^\Psi}-[h(x)]^{\Lambda^\Psi} .
$$

thus,

$$
\Lambda^\Psi t \int_{X} f(x) g(x) \mathrm{d} \mu \leq \int_{X}(h(x)+t f(x))^{\Lambda^\Psi} \mathrm{~d} \mu-\int_{X}[h(x)]^{\Lambda^\Psi} \mathrm{~d} \mu=\|h+t f\|_{\Lambda^\Psi}^{\Lambda^\Psi}-\|h\|_{\Lambda^\Psi}^{\Lambda^\Psi} .
$$

From $\theta$-Minkowski's inequality, we have

$$
\Lambda^\Psi \int_{X} f(x) g(x) \mathrm{d} \mu \leq \frac{\left(\|h\|_{\Lambda^\Psi}+t\|f\|_{\Lambda^\Psi}\right)^{\Lambda^\Psi}-\|h\|_{\Lambda^\Psi}^{\Lambda^\Psi}}{t}
$$

Taking $f(t)=\left(\|h\|_{\Lambda^\Psi}+t\|f\|_{\Lambda^\Psi}\right)^{\Lambda^\Psi}$, we deduce $f(0)=\|h\|_{\Lambda^\Psi}^{\Lambda^\Psi}$. Then

$$
\begin{aligned}
\Lambda^\Psi \int_{X} f g \mathrm{~d} \mu & \leq \lim _{t \rightarrow 0} \frac{f(t)-f(0)}{t}=f^{\prime}(0) \\
& =\Lambda^\Psi\left(\|h\|_{\Lambda^\Psi}\right)^{\Lambda^\Psi-1}\|f\|_{\Lambda^\Psi} .
\end{aligned}
$$

Note that

$$
\begin{aligned}
\left(\int_{X}[h(x)]^{\Lambda^\Psi} \mathrm{~d} \mu\right)^{\frac{\Lambda^\Psi-1}{\Lambda^\Psi}} & =\left(\int_{X}[g(x)]^{\frac{\Lambda^\Psi}{\Lambda^\Psi - 1}} \mathrm{~d} \mu\right)^{1-\frac{1}{\Lambda^\Psi}} \\
& =\left(\int_{X}[g(x)]^{\frac{\Lambda^\Psi}{\Lambda^\Psi - 1}} \mathrm{~d} \mu\right)^{\frac{1}{\frac{\Lambda^\Psi}{\Lambda^\Psi - 1}}}
\end{aligned}
$$

implying, $\|h\|_{\Lambda^\Psi}^{\Lambda^\Psi-1}=\|g\|_{\frac{\Lambda^\Psi}{\Lambda^\Psi - 1}}$. Thus

$$
\int_{X} f g \mathrm{~d} \mu \leq\|f\|_{\Lambda^\Psi}\|g\|_{\frac{\Lambda^\Psi}{\Lambda^\Psi - 1}}
$$

Finally, choosing $a=\|g\|_{\frac{\Lambda^\Psi}{\Lambda^\Psi - 1}}^{\frac{\Lambda^\Psi}{\Lambda^\Psi - 1}}$ and $b=\|f\|_{\Lambda^\Psi}^{\Lambda^\Psi}$ such that $a|f|^{\Lambda^\Psi}=b|g|^{\frac{\Lambda^\Psi}{\Lambda^\Psi - 1}}$, implying

$$
|f|=\|f\|_{\Lambda^\Psi} \frac{|g|^{\frac{\Lambda^\Psi}{\Lambda^\Psi - 1} / \Lambda^\Psi}}{\|g\|_{\frac{\Lambda^\Psi}{\Lambda^\Psi - 1}}^{\frac{\Lambda^\Psi}{\Lambda^\Psi - 1} / \Lambda^\Psi}}
$$

and integrating we deduce the required result
\end{proof}
We will give another proof of the $\theta$-Hölder inequality using $\theta$-Minkowski's inequality.
We are now  furnishing one more alternative proof of the $\theta$-Hölder inequality.
\begin{proof}
Let $f \in L_{\Lambda^\Psi}$ and $g \in L_{\frac{\Lambda^\Psi}{\Lambda^\Psi - 1}}$. demarcate $F=|f|^{\Lambda^\Psi}$ and $G=|g|^{\frac{\Lambda^\Psi}{\Lambda^\Psi - 1}}$, which conforms that $F^{1 / \Lambda^\Psi} \in L_{\Lambda^\Psi}$ and $G^{1 / \Lambda^\Psi} \in L_{\frac{\Lambda^\Psi}{\Lambda^\Psi - 1}}$. Now we deduce

$$
\begin{aligned}
\left\|\theta F^{1 / \Lambda^\Psi}+(1-\theta) G^{1 / \Lambda^\Psi}\right\|_{L_{\Lambda^\Psi}} & \leq\left\|\theta F^{1 / \Lambda^\Psi}\right\|_{L_{\Lambda^\Psi}}+\left\|(1-\theta) G^{1 / \Lambda^\Psi}\right\|_{L_{\Lambda^\Psi}} \\
& =\theta\left\|F^{1 / \Lambda^\Psi}\right\|_{L_{\Lambda^\Psi}}+(1-\theta)\left\|G^{1 / \Lambda^\Psi}\right\|_{L_{\Lambda^\Psi}}
\end{aligned}
$$
or in integral terms
$$
\begin{aligned}
\int_{X}\left(\theta F^{1 / \Lambda^\Psi}+(1-\theta) G^{1 / \Lambda^\Psi}\right)^{\Lambda^\Psi} \mathrm{~d} \mu \\
\leq\left[\theta\left(\int_{X} F \mathrm{~d} \mu\right)^{1 / \Lambda^\Psi}+(1-\theta)\left(\int_{X} G \mathrm{~d} \mu\right)^{1 / \Lambda^\Psi}\right]^{\Lambda^\Psi} .
\end{aligned}
$$
we can now deduce
$$
\int_{X} F^{\theta} G^{(1-\theta)} \mathrm{d} \mu \leq\left(\int_{X} F \mathrm{~d} \mu\right)^{\theta} \cdot\left(\int_{X} G \mathrm{~d} \mu\right)^{(1-\theta)}
$$

which is exactly

$$
\int_{X}|f|^{\Lambda^\Psi \theta} \cdot|g|^{\frac{\Lambda^\Psi}{\Lambda^\Psi - 1}(1-\theta)} \mathrm{d} \mu \leq\left(\int_{X}|f|^{\Lambda^\Psi} \mathrm{~d} \mu\right)^{\theta}\left(\int_{X}|g|^{\frac{\Lambda^\Psi}{\Lambda^\Psi - 1}} \mathrm{~d} \mu\right)^{(1-\theta)}
$$

Taking $\theta=1 / \Lambda^\Psi$ we deduce $\theta$-Hölder's inequality.
\end{proof}
\begin{theorem}
Let $\Lambda^\Psi_{k}>1$ be such that $\sum_{k=1}^{n} \frac{1}{\Lambda^\Psi_{k}}=1$. If $f_{k} \in L_{\Lambda^\Psi_{k}}(X, \mathscr{A}, \mu)$, for all $k=1,2, \ldots, n$, than we have that $f_{1} \times f_{2} \cdots \times f_{n} \in L_{1}(X, \mathscr{A}, \mu)$ and

$$
\int_{X}\left|\prod_{k=1}^{n} f_{k}\right| \mathrm{d} \mu \leq \prod_{k=1}^{n}\|f\|_{\Lambda^\Psi_{k}}
$$
\end{theorem}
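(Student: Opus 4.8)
The plan is to argue by induction on the number of factors $n$, using the two-function $\theta$-Hölder inequality just established (namely $\int_X |fg|\,\mathrm{d}\mu \le \|f\|_{\Lambda^\Psi}\,\|g\|_{\frac{\Lambda^\Psi}{\Lambda^\Psi-1}}$ for conjugate exponents) as the engine. The base case $n=2$ is precisely that inequality, with $\Lambda^\Psi_1,\Lambda^\Psi_2$ conjugate since $\frac{1}{\Lambda^\Psi_1}+\frac{1}{\Lambda^\Psi_2}=1$.

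For the inductive step I would assume the claim for $n-1$ factors and peel off the last one. Given $f_1,\dots,f_n$ with $\sum_{k=1}^n \frac{1}{\Lambda^\Psi_k}=1$, set $p=\Lambda^\Psi_n$ and let $q=\frac{p}{p-1}$ be its conjugate, so that $\frac{1}{q}=1-\frac{1}{\Lambda^\Psi_n}=\sum_{k=1}^{n-1}\frac{1}{\Lambda^\Psi_k}$. Applying two-function $\theta$-Hölder to the pair $\bigl(\prod_{k=1}^{n-1} f_k,\, f_n\bigr)$ gives
$$
\int_X \Bigl|\prod_{k=1}^n f_k\Bigr|\,\mathrm{d}\mu \le \Bigl\|\prod_{k=1}^{n-1} f_k\Bigr\|_{q}\,\|f_n\|_{\Lambda^\Psi_n}.
$$

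It then remains to control the first factor. Since $\bigl\|\prod_{k=1}^{n-1} f_k\bigr\|_q^q=\int_X \prod_{k=1}^{n-1}|f_k|^q\,\mathrm{d}\mu$, I would apply the induction hypothesis to the $n-1$ functions $|f_k|^q$ with rescaled exponents $s_k:=\Lambda^\Psi_k/q$. The decisive bookkeeping is that $\sum_{k=1}^{n-1}\frac{1}{s_k}=q\sum_{k=1}^{n-1}\frac{1}{\Lambda^\Psi_k}=q\cdot\frac{1}{q}=1$ and that each $s_k>1$ (for $n\ge 3$ the $n-1\ge 2$ positive reciprocals $1/s_k$ sum to $1$, forcing each below $1$). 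The induction hypothesis then yields $\int_X \prod_{k=1}^{n-1}|f_k|^q\,\mathrm{d}\mu \le \prod_{k=1}^{n-1}\bigl(\int_X |f_k|^{\Lambda^\Psi_k}\,\mathrm{d}\mu\bigr)^{q/\Lambda^\Psi_k}=\prod_{k=1}^{n-1}\|f_k\|_{\Lambda^\Psi_k}^{q}$, so taking $q$-th roots collapses this to $\bigl\|\prod_{k=1}^{n-1} f_k\bigr\|_q=\prod_{k=1}^{n-1}\|f_k\|_{\Lambda^\Psi_k}$; substituting into the displayed inequality closes the induction.

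The main obstacle I anticipate is not any hard estimate but the exponent bookkeeping: one must confirm that the rescaled exponents $s_k=\Lambda^\Psi_k/q$ are again admissible (strictly above $1$, conjugate sum equal to $1$) so the hypothesis genuinely applies, and that each $|f_k|^q$ lies in $L_{s_k}$, which holds because $\int_X |f_k|^{q s_k}\,\mathrm{d}\mu=\int_X |f_k|^{\Lambda^\Psi_k}\,\mathrm{d}\mu<\infty$ is equivalent to $f_k\in L_{\Lambda^\Psi_k}$. Finiteness of the right-hand bound then forces $\int_X |\prod_{k=1}^n f_k|\,\mathrm{d}\mu<\infty$, giving $\prod_{k=1}^n f_k\in L_1(X,\mathscr{A},\mu)$ automatically. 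I also note that the right-hand side of the statement should read $\prod_{k=1}^n\|f_k\|_{\Lambda^\Psi_k}$, the subscript $k$ on $f$ having evidently been dropped.
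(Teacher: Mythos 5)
Your proposal is correct and is essentially the paper's own argument made fully rigorous: the paper proves the case $n=3$ by the identical device---introducing the intermediate exponent $s$ with $\frac{1}{s}=\frac{1}{\Lambda^\Psi}+\frac{1}{\frac{\Lambda^\Psi}{\Lambda^\Psi-1}}$, bounding $\|fg\|_{s}\leq\|f\|_{\Lambda^\Psi}\|g\|_{\frac{\Lambda^\Psi}{\Lambda^\Psi-1}}$ by applying two-function $\theta$-H\"older to $|f|^{s},|g|^{s}$ with rescaled conjugate exponents, then pairing $fg$ with the remaining factor---and then simply asserts that ``the general case follows by similar arguments,'' which is precisely the induction you carry out, with the same admissibility bookkeeping. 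The only blemish is cosmetic: after taking $q$-th roots you write $\bigl\|\prod_{k=1}^{n-1}f_{k}\bigr\|_{q}=\prod_{k=1}^{n-1}\|f_{k}\|_{\Lambda^\Psi_{k}}$ where the relation should remain an inequality $\leq$, which is all the argument needs.
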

\begin{proof}
 We give the proof for $n=3$. Let $\frac{1}{\Lambda^\Psi}+\frac{1}{\frac{\Lambda^\Psi}{\Lambda^\Psi - 1}}+\frac{1}{s}=1$ and take $\frac{1}{\Lambda^\Psi}$, implying $\frac{s}{\Lambda^\Psi}+\frac{s}{\frac{\Lambda^\Psi}{\Lambda^\Psi - 1}}=1$, implying that $\frac{1}{s}+\frac{1}{r}=1$. We want to demonstrate that $f g \in L_{s}(X, \mathscr{A}, \mu)$. Indeed,

$$
\int_{X}|f g|^{s} \mathrm{~d} \mu \leq\left(\int_{X}|f|^{s \Lambda^\Psi / s} \mathrm{~d} \mu\right)^{s / \Lambda^\Psi}\left(\int_{X}|g|^{s \frac{\Lambda^\Psi}{\Lambda^\Psi - 1} / s} \mathrm{~d} \mu\right)^{s / \frac{\Lambda^\Psi}{\Lambda^\Psi - 1}}
$$

i.e.

$$
\left(\int_{X}|f g|^{s} \mathrm{~d} \mu\right)^{1 / s} \leq\|f\|_{\Lambda^\Psi}\|g\|_{\frac{\Lambda^\Psi}{\Lambda^\Psi - 1}}
$$

therefore $f g \in L_{s}(X, \mathscr{A}, \mu)$. Finally, we deduce

$$
\int_{X}|f g h| \mathrm{d} \mu \leq\left(\int_{X}|f g|^{s} \mathrm{~d} \mu\right)^{1 / s}\left(\int_{X}|h|^{r} \mathrm{~d} \mu\right)^{1 / r} \leq\|f\|_{\Lambda^\Psi}\|g\|_{\frac{\Lambda^\Psi}{\Lambda^\Psi - 1}}\|h\|_{r} .
$$

The general case follows by similar arguments.
\end{proof}
Example. As an application of $\theta$-Holdër's inequality, we demonstrate that the Gamma function $\Gamma:(0, \infty) \rightarrow \mathbb{R}$ given by

$$
\Gamma(\Lambda^\Psi)=\int_{0}^{\infty} \mathrm{e}^{-t} t^{\Lambda^\Psi-1} \mathrm{~d} t
$$

is a log-convex function, i.e., it satisfies $\varphi(\lambda x+(1-\lambda) y) \leq \varphi(x)^{\lambda} \varphi(y)^{1-\lambda}$ for $0<$ $\lambda<1$ and $x, y$ in the domain of $\varphi$. Let $x, y \in(0, \infty), 0<\lambda<1, \Lambda^\Psi=1 / \lambda$ and $\frac{\Lambda^\Psi}{\Lambda^\Psi - 1}=$ $1 /(1-\lambda)$. Let us take

$$
f(t)=t^{\frac{x-1}{\Lambda^\Psi}} \mathrm{e}^{-\frac{t}{\Lambda^\Psi}}, \quad g(t)=t^{\frac{y-1}{\frac{\Lambda^\Psi}{\Lambda^\Psi - 1}}} \mathrm{e}^{-\frac{t}{\frac{\Lambda^\Psi}{\Lambda^\Psi - 1}}}
$$

and now by Holdër's inequality we deduce

$$
\int_{\varepsilon}^{N} f(t) g(t) \mathrm{d} t \leq\left(\int_{\varepsilon}^{N} f(t)^{\Lambda^\Psi} \mathrm{~d} t\right)^{\frac{1}{\Lambda^\Psi}}\left(\int_{\varepsilon}^{N} g(t)^{\frac{\Lambda^\Psi}{\Lambda^\Psi - 1}} \mathrm{~d} t\right)^{\frac{1}{\frac{\Lambda^\Psi}{\Lambda^\Psi - 1}}}
$$

Now taking $\varepsilon \rightarrow 0$ and $N \rightarrow \infty$ we deduce

$$
\Gamma\left(\frac{x}{\Lambda^\Psi}+\frac{y}{\frac{\Lambda^\Psi}{\Lambda^\Psi - 1}}\right) \leq \Gamma(x)^{\frac{1}{\Lambda^\Psi}} \Gamma(y)^{\frac{1}{\frac{\Lambda^\Psi}{\Lambda^\Psi - 1}}}
$$

since $f(t) g(t)=t^{x / \Lambda^\Psi+y / \frac{\Lambda^\Psi}{\Lambda^\Psi - 1}-1} \mathrm{e}^{-t}, f(t)^{\Lambda^\Psi}=t^{x-1} \mathrm{e}^{-t}$ and $g(t)^{\frac{\Lambda^\Psi}{\Lambda^\Psi - 1}}=t^{y-1} \mathrm{e}^{-t}$.

There is a reverse $\theta$-Hölder type inequality where we can deduce information from one of the integrand functions knowing an a priori uniformly bound with respect to the other integrand function.

The following mathematical consequence furnishes us with another characterization of the norm $\|\cdot\|_{\Lambda^\Psi}$.

\begin{theorem}
Let $f \in L_{\Lambda^\Psi}(X, \mathscr{A}, \mu)$ with $1 \leq \Lambda^\Psi<\infty$, implying

$$
\|f\|_{\Lambda^\Psi}=\|f\|_{L_{\Lambda^\Psi}}=\sup _{g \in L_{\frac{\Lambda^\Psi}{\Lambda^\Psi - 1}}(X, \mathscr{A}, \mu)}\left\{\|f g\|_{1}\|g\|_{\frac{\Lambda^\Psi}{\Lambda^\Psi - 1}}^{-1}: g \neq 0, \frac{1}{\Lambda^\Psi}+\frac{1}{\frac{\Lambda^\Psi}{\Lambda^\Psi - 1}}=1\right\} .
$$
\end{theorem}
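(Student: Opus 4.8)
The plan is to establish the two inequalities
$$
\|f\|_{\Lambda^\Psi} \;\geq\; \sup_{g \neq 0}\frac{\|fg\|_1}{\|g\|_{\frac{\Lambda^\Psi}{\Lambda^\Psi-1}}}
\qquad\text{and}\qquad
\|f\|_{\Lambda^\Psi} \;\leq\; \sup_{g \neq 0}\frac{\|fg\|_1}{\|g\|_{\frac{\Lambda^\Psi}{\Lambda^\Psi-1}}}
$$
separately, the first following at once from the integral $\theta$-Hölder inequality already established and the second being obtained by exhibiting a single explicit test function $g$ that attains the bound. First I would dispose of the degenerate case $\|f\|_{\Lambda^\Psi}=0$, in which $f=0$ $\mu$-a.e., so that $\|fg\|_1=0$ for every admissible $g$ and both sides vanish trivially. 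Henceforth I assume $\|f\|_{\Lambda^\Psi}>0$.

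For the first inequality I would invoke the $\theta$-Hölder inequality proved above: for every $g \in L_{\frac{\Lambda^\Psi}{\Lambda^\Psi-1}}$ with $g\neq 0$,
$$
\|fg\|_1=\int_X|fg|\,\mathrm{d}\mu \leq \|f\|_{\Lambda^\Psi}\,\|g\|_{\frac{\Lambda^\Psi}{\Lambda^\Psi-1}},
$$
whence $\|fg\|_1\,\|g\|_{\frac{\Lambda^\Psi}{\Lambda^\Psi-1}}^{-1}\leq\|f\|_{\Lambda^\Psi}$; taking the supremum over all admissible $g$ gives the upper bound on the supremum.

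The reverse inequality is the substantive part, and I would obtain it by constructing a $g$ that realizes the bound. For $1<\Lambda^\Psi<\infty$ set $g=|f|^{\Lambda^\Psi-1}\operatorname{sgn}(f)$. Then $fg=|f|^{\Lambda^\Psi}$, so $\|fg\|_1=\|f\|_{\Lambda^\Psi}^{\Lambda^\Psi}$, and using the conjugacy identity $(\Lambda^\Psi-1)\frac{\Lambda^\Psi}{\Lambda^\Psi-1}=\Lambda^\Psi$ I would compute
$$
\|g\|_{\frac{\Lambda^\Psi}{\Lambda^\Psi-1}}^{\frac{\Lambda^\Psi}{\Lambda^\Psi-1}}=\int_X|f|^{\Lambda^\Psi}\,\mathrm{d}\mu=\|f\|_{\Lambda^\Psi}^{\Lambda^\Psi},
$$
which both shows $g\in L_{\frac{\Lambda^\Psi}{\Lambda^\Psi-1}}$ (the integral is finite) and yields $\|g\|_{\frac{\Lambda^\Psi}{\Lambda^\Psi-1}}=\|f\|_{\Lambda^\Psi}^{\Lambda^\Psi-1}$. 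Dividing and using $\Lambda^\Psi-(\Lambda^\Psi-1)=1$ gives $\|fg\|_1\,\|g\|_{\frac{\Lambda^\Psi}{\Lambda^\Psi-1}}^{-1}=\|f\|_{\Lambda^\Psi}$, so the supremum is at least $\|f\|_{\Lambda^\Psi}$. For the boundary index $\Lambda^\Psi=1$, where the conjugate index is $\infty$, the analogous test function is $g=\operatorname{sgn}(f)$: then $|g|\leq 1$ gives $\|g\|_{\infty}=1$ (using $\|f\|_1>0$) while $fg=|f|$ gives $\|fg\|_1=\|f\|_1$, so the ratio equals $\|f\|_1$ exactly.

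Combining the two inequalities forces equality, and in fact the supremum is attained. The main obstacle I anticipate is purely bookkeeping: checking that the constructed $g$ lands in the conjugate space with precisely the right norm and that the exponent arithmetic collapses correctly through the conjugacy relation. A worthwhile point to emphasize is that, because $g$ is built directly from $f\in L_{\Lambda^\Psi}$ with $\Lambda^\Psi<\infty$, no $\sigma$-finiteness hypothesis on $(X,\mathscr{A},\mu)$ is required and the supremum is genuinely a maximum.
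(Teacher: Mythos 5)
Your proposal is correct and follows essentially the same route as the paper: the inequality $\sup \leq \|f\|_{\Lambda^\Psi}$ via the $\theta$-H\"older inequality, and the reverse inequality via the test function $g \propto |f|^{\Lambda^\Psi-1}$ (the paper normalizes with $c=\|f\|_{\Lambda^\Psi}^{1-\Lambda^\Psi}$ so that $\|g\|_{\frac{\Lambda^\Psi}{\Lambda^\Psi-1}}=1$, which is equivalent to your unnormalized ratio computation, and the factor $\operatorname{sgn}(f)$ you include is harmless since only $|fg|$ is integrated). If anything, your write-up is slightly more complete than the paper's, since you explicitly treat the degenerate case $f=0$ and the endpoint $\Lambda^\Psi=1$ with conjugate exponent $\infty$, where the paper's exponent arithmetic (which divides by $\Lambda^\Psi-1$) does not literally apply even though the theorem is stated for $1 \leq \Lambda^\Psi < \infty$.
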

\begin{proof}
Using $\theta$-Hölder's inequality we have
$$
\|f g\|_{1}=\int_{X}|f g| \mathrm{d} \mu \leq\|f\|_{\Lambda^\Psi}\|g\|_{\frac{\Lambda^\Psi}{\Lambda^\Psi - 1}}
$$
implying
$$
\|f g\|_{1}\|g\|_{\frac{\Lambda^\Psi}{\Lambda^\Psi - 1}}^{-1} \leq\|f\|_{\Lambda^\Psi}
$$

for $g \neq 0$, which implies

$$
\sup _{g \in L_{\frac{\Lambda^\Psi}{\Lambda^\Psi - 1}}(X, \mathscr{A}, \mu)}\left\{\|f g\|_{1}\|g\|_{\frac{\Lambda^\Psi}{\Lambda^\Psi - 1}}^{-1}: g \neq 0, \frac{1}{\Lambda^\Psi}+\frac{1}{\frac{\Lambda^\Psi}{\Lambda^\Psi - 1}}=1\right\} \leq\|f\|_{\Lambda^\Psi}
$$

Moreover, suppose $f \neq 0$ and $g=c|f|^{\Lambda^\Psi-1}$ ( $c$ constant), implying

$$
|f g|=c|f|^{\Lambda^\Psi}
$$

thus

$$
\|f g\|_{1}=c\|f\|_{\Lambda^\Psi}^{\Lambda^\Psi}
$$

If we choose $c=\|f\|_{\Lambda^\Psi}^{1-\Lambda^\Psi}$ we deduce

$$
\|f g\|_{1}=\|f\|_{\Lambda^\Psi}^{1-\Lambda^\Psi}\|f\|_{\Lambda^\Psi}^{\Lambda^\Psi}=\|f\|_{\Lambda^\Psi} .
$$

Now

$$
|g|^{\frac{\Lambda^\Psi}{\Lambda^\Psi - 1}}=c^{\frac{\Lambda^\Psi}{\Lambda^\Psi - 1}}|f|^{\frac{\Lambda^\Psi}{\Lambda^\Psi - 1}(\Lambda^\Psi-1)}
$$

and integrating both sides give us

$$
\|g\|_{\frac{\Lambda^\Psi}{\Lambda^\Psi - 1}}=c\left(\int_{X}|f|^{\Lambda^\Psi} \mathrm{~d} \mu\right)^{1 / \frac{\Lambda^\Psi}{\Lambda^\Psi - 1}}=\|f\|_{\Lambda^\Psi}^{1-\Lambda^\Psi}\|f\|_{\Lambda^\Psi}^{\Lambda^\Psi / \frac{\Lambda^\Psi}{\Lambda^\Psi - 1}}=\|f\|_{\Lambda^\Psi}^{1-\Lambda^\Psi}\|f\|_{\Lambda^\Psi}^{\Lambda^\Psi-1}=1
$$

since $f \neq 0$, implying $\|g\|_{\frac{\Lambda^\Psi}{\Lambda^\Psi - 1}}^{-1}=1$.

Thus, we can express above as

$$
\|f\|_{\Lambda^\Psi}=\|f g\|_{1}\|g\|_{\frac{\Lambda^\Psi}{\Lambda^\Psi - 1}}^{-1} \leq \sup _{g \in L_{\frac{\Lambda^\Psi}{\Lambda^\Psi - 1}}(X, \mathscr{A}, \mu)}\left\{\|f g\|_{1}\|g\|_{\frac{\Lambda^\Psi}{\Lambda^\Psi - 1}}^{-1}: g \neq 0, \frac{1}{\Lambda^\Psi}+\frac{1}{\frac{\Lambda^\Psi}{\Lambda^\Psi - 1}}=1\right\} .
$$
We can now deduce the mathematical consequence.
\end{proof}
We now give a mathematical consequence, sometimes called the $\theta$-integral Minkowski inequality or even generalized Minkowski inequality, which is a corollary of the characterization of the $\theta$-Lebesgue norm.
\begin{theorem}
($\theta$-Integral Minkowski inequality). Let $\left(X, \mathscr{A}_{1}, \mu\right)$ and $\left(Y, \mathscr{A}_{2}, \mu\right)$ be $\sigma$-finite measure spaces. Suppose that $f$ is a measurable $\mathscr{A}_{1} \times \mathscr{A}_{2}$ function and $f(\cdot, y) \in L_{\Lambda^\Psi}(\mu)$ for all $y \in Y$. Then for $1 \leq \Lambda^\Psi<\infty$ we have

$$
\left\|\int_{Y} f(\cdot, y) \mathrm{d} y\right\|_{L_{\Lambda^\Psi}(X)} \leq \int_{Y}\|f(\cdot, y)\|_{L_{\Lambda^\Psi}(X)} \mathrm{d} y
$$

where the dot means that the norm is taken with respect to the first variable.
\end{theorem}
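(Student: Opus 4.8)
The plan is to exploit the dual characterization of the $\theta$-Lebesgue norm established immediately above, which expresses $\|\cdot\|_{\Lambda^\Psi}$ as a supremum of the form $\sup_{g} \|Fg\|_1 \|g\|_{\frac{\Lambda^\Psi}{\Lambda^\Psi-1}}^{-1}$ over nonzero $g \in L_{\frac{\Lambda^\Psi}{\Lambda^\Psi-1}}$ with $\tfrac{1}{\Lambda^\Psi}+\tfrac{1}{\frac{\Lambda^\Psi}{\Lambda^\Psi-1}}=1$. Writing $F(x) = \int_Y f(x,y)\,dy$, my goal reduces to bounding $\int_X |F(x) g(x)|\,d\mu(x)$ uniformly by $\int_Y \|f(\cdot,y)\|_{L_{\Lambda^\Psi}(X)}\,dy$ for every $g$ normalized by $\|g\|_{\frac{\Lambda^\Psi}{\Lambda^\Psi-1}} = 1$, after which taking the supremum over such $g$ delivers the claim. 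If the right-hand side of the asserted inequality is infinite there is nothing to prove, so I may assume it is finite.

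First I would fix such a $g$ and estimate, using $|F(x)| \le \int_Y |f(x,y)|\,dy$,
$$
\int_X |F(x)||g(x)|\,d\mu(x) \le \int_X \left(\int_Y |f(x,y)|\,dy\right)|g(x)|\,d\mu(x).
$$
Because the integrand $|f(x,y)||g(x)|$ is nonnegative and both spaces are $\sigma$-finite, Tonelli's theorem permits interchanging the order of integration, giving $\int_Y \left(\int_X |f(x,y)||g(x)|\,d\mu(x)\right)dy$. Applying the $\theta$-H\"older inequality to the inner integral yields $\int_X |f(x,y)||g(x)|\,d\mu(x) \le \|f(\cdot,y)\|_{\Lambda^\Psi}\|g\|_{\frac{\Lambda^\Psi}{\Lambda^\Psi-1}} = \|f(\cdot,y)\|_{L_{\Lambda^\Psi}(X)}$, since $\|g\|_{\frac{\Lambda^\Psi}{\Lambda^\Psi-1}} = 1$. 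Substituting back bounds the whole expression by $\int_Y \|f(\cdot,y)\|_{L_{\Lambda^\Psi}(X)}\,dy$, which is independent of $g$; taking the supremum over admissible $g$ and invoking the dual characterization concludes $\|F\|_{L_{\Lambda^\Psi}(X)} \le \int_Y \|f(\cdot,y)\|_{L_{\Lambda^\Psi}(X)}\,dy$.

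The main obstacle is the justification of the interchange of integration order, which is exactly why $\sigma$-finiteness of both measure spaces is assumed: it guarantees that the product measure is well-defined and that Tonelli's theorem applies to the nonnegative, jointly measurable map $(x,y)\mapsto |f(x,y)||g(x)|$. A secondary technical point I would address is the measurability and a.e.-finiteness of $F$; joint measurability of $f$ together with the Tonelli machinery ensures $x \mapsto \int_Y f(x,y)\,dy$ is measurable, and finiteness of the right-hand side forces $F \in L_{\Lambda^\Psi}(X)$, hence $F$ is finite $\mu$-a.e. The degenerate endpoint $\Lambda^\Psi = 1$ is handled without the duality argument, since there the inequality collapses to a direct application of Tonelli's theorem to $|f|$ itself.
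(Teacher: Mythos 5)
Your proposal is correct and follows essentially the same route as the paper: both define $F(x)=\int_Y f(x,y)\,\mathrm{d}y$, invoke the dual characterization $\|F\|_{\Lambda^\Psi}=\sup_{\|g\|_{\frac{\Lambda^\Psi}{\Lambda^\Psi-1}}=1}\int_X|Fg|\,\mathrm{d}\mu$ from the preceding theorem, interchange the order of integration by Fubini--Tonelli, and bound the result uniformly in $g$. If anything, your write-up is slightly more careful than the paper's, which asserts the final step as an equality ``by the characterization'' where only the inequality via $\theta$-H\"older (exactly as you argue) is actually justified --- and that inequality is all that is needed.
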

\begin{proof}
Let us demarcate $a(x)=\int_{Y} f(x, y) \mathrm{d} y$. We have
$$
\begin{aligned}
\|a\|_{L_{\Lambda^\Psi}(X)} & =\sup _{\substack{g \in L^{\frac{\Lambda^\Psi}{\Lambda^\Psi - 1}(X)} \\
\|g\|_{\frac{\Lambda^\Psi}{\Lambda^\Psi - 1}}=1}} \int_{X}|a(x) g(x)| \mathrm{d} x \\
& =\sup _{\substack{g \in L^{\frac{\Lambda^\Psi}{\Lambda^\Psi - 1}}(X) \\
\|g\|_{\frac{\Lambda^\Psi}{\Lambda^\Psi - 1}}=1}} \int_{X}\left|\int_{Y} f(x, y) g(x) \mathrm{d} y\right| \mathrm{d} x \\
& \leq \sup _{\substack{g \in L^{\frac{\Lambda^\Psi}{\Lambda^\Psi - 1}}(X) \\
\|g\|_{\frac{\Lambda^\Psi}{\Lambda^\Psi - 1}}=1}} \int_{Y} \int_{X}|f(x, y) g(x)| \mathrm{d} x \mathrm{~d} y \\
& =\int_{Y}\|f(\cdot, y)\|_{L_{\Lambda^\Psi}(X)} \mathrm{d} y
\end{aligned}
$$

where the first and last equalities are just consequences of the characterization given above for the norm of an $L_{\Lambda^\Psi}$ function whereas the inequality is a consequence of Fubini-Tonelli theorem and the inequality $\left|\int f\right| \leq \int|f|$.
\end{proof}
\begin{theorem}
Let $k \in L^{1}\left(\mathbb{R}^{n}\right)$ and $f \in L_{\Lambda^\Psi}\left(\mathbb{R}^{n}\right)$. Then
$$
\left\|\int_{\mathbb{R}^{n}} k(x-t) f(t) \mathrm{d} t\right\|_{L_{\Lambda^\Psi}\left(\mathbb{R}^{n}\right)} \leq\|k\|_{L^{1}\left(\mathbb{R}^{n}\right)}\|f\|_{L_{\Lambda^\Psi}\left(\mathbb{R}^{n}\right)}
$$
\end{theorem}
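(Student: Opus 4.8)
The plan is to recognize the stated inequality as the $\theta$-Lebesgue analogue of Young's convolution inequality and to derive it as a direct consequence of the $\theta$-integral Minkowski inequality established in the preceding theorem. Writing $(k * f)(x) = \int_{\mathbb{R}^n} k(x-t) f(t)\, \mathrm{d}t$, the crux is to first recast the convolution into the form in which the $L^1$ kernel appears as an outer multiplier and the $L_{\Lambda^\Psi}$ function appears only through its translates. Concretely, the substitution $u = x - t$ (a measure-preserving bijection of $\mathbb{R}^n$, since Lebesgue measure is translation invariant) converts the integral into $(k * f)(x) = \int_{\mathbb{R}^n} k(u) f(x-u)\, \mathrm{d}u$, which is the form I will work with.

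First I would set $a(x) = \int_{\mathbb{R}^n} k(u) f(x-u)\, \mathrm{d}u$ and apply the $\theta$-integral Minkowski inequality with $Y = \mathbb{R}^n$ (the variable $u$) and the norm taken with respect to $x$, treating $F(x,u) = k(u) f(x-u)$. This yields $\|a\|_{L_{\Lambda^\Psi}(\mathbb{R}^n)} \le \int_{\mathbb{R}^n} \|k(u) f(\cdot - u)\|_{L_{\Lambda^\Psi}(\mathbb{R}^n)}\, \mathrm{d}u$. For each fixed $u$ the scalar $|k(u)|$ factors out of the inner norm, and the translate $f(\cdot - u)$ has the same $\theta$-Lebesgue norm as $f$; that is, $\|k(u) f(\cdot - u)\|_{L_{\Lambda^\Psi}} = |k(u)|\, \|f\|_{L_{\Lambda^\Psi}}$. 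Substituting this identity back and pulling the constant $\|f\|_{L_{\Lambda^\Psi}}$ out of the $u$-integral leaves $\int_{\mathbb{R}^n} |k(u)|\, \mathrm{d}u \cdot \|f\|_{L_{\Lambda^\Psi}} = \|k\|_{L^1(\mathbb{R}^n)} \|f\|_{L_{\Lambda^\Psi}(\mathbb{R}^n)}$, which is exactly the claimed bound.

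Two auxiliary facts must be secured before the chain closes. The first, and the one I regard as the genuine heart of the argument, is the translation invariance of the $\theta$-Lebesgue norm, $\|f(\cdot - u)\|_{L_{\Lambda^\Psi}} = \|f\|_{L_{\Lambda^\Psi}}$; this follows by the change of variables $v = x - u$ inside the defining integral $\int_{\mathbb{R}^n} |f(x-u)|^{\Lambda^\Psi}\, \mathrm{d}x$ together with translation invariance of Lebesgue measure, and in particular it guarantees $f(\cdot - u) \in L_{\Lambda^\Psi}$ for every $u$, so the hypotheses of the integral Minkowski inequality are met. The second is the joint $\mathscr{A}_1 \times \mathscr{A}_2$-measurability of $(x,u) \mapsto k(u) f(x-u)$, needed to invoke the Fubini--Tonelli theorem underlying the integral Minkowski step; this holds because $(x,u) \mapsto (u, x-u)$ is a linear automorphism of $\mathbb{R}^{2n}$, so the composition with the measurable functions $k$ and $f$ is measurable. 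I expect the main pitfall to be choosing the correct representation of the convolution: applying the integral Minkowski inequality to the unsubstituted kernel $k(x-t)$ would pull out $\|k(\cdot - t)\|_{L_{\Lambda^\Psi}}$ and erroneously require $k \in L_{\Lambda^\Psi}$, producing the wrong mixed norm $\|k\|_{L_{\Lambda^\Psi}} \|f\|_{L^1}$; the substitution $u = x-t$ is precisely what routes the $L^1$ integrability onto $k$ and the translation invariance onto $f$.
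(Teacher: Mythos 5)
Your proposal is correct and follows essentially the same route as the paper's own proof: a change of variables to write the convolution as $\int_{\mathbb{R}^n} k(t) f(x-t)\,\mathrm{d}t$, followed by the $\theta$-integral Minkowski inequality, factoring out $|k(t)|$, and invoking translation invariance of the $L_{\Lambda^\Psi}$ norm. Your additional justifications (the change-of-variables proof of translation invariance and the joint measurability needed for Fubini--Tonelli) are details the paper leaves implicit, but they do not change the argument.
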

\begin{proof}
By a linear change of variables we have
$$
\int_{\mathbb{R}^{n}} k(x-t) f(t) \mathrm{d} t=\int_{\mathbb{R}^{n}} k(t) f(x-t) \mathrm{d} t
$$

which gives

$$
\begin{aligned}
\left\|\int_{\mathbb{R}^{n}} k(x-t) f(t) \mathrm{d} t\right\|_{L_{\Lambda^\Psi}\left(\mathbb{R}^{n}\right)} & =\left\|\int_{\mathbb{R}^{n}} k(t) f(x-t) \mathrm{d} t\right\|_{L_{\Lambda^\Psi}\left(\mathbb{R}^{n}\right)} \\
& \leq \int_{\mathbb{R}^{n}}\|k(t) f(\cdot-t)\|_{L_{\Lambda^\Psi}\left(\mathbb{R}^{n}\right)} \mathrm{d} t \\
& \leq \int_{\mathbb{R}^{n}} \mid k(t)\|\| f(\cdot-t) \|_{L_{\Lambda^\Psi}\left(\mathbb{R}^{n}\right)} \mathrm{d} t \\
& \leq \int_{\mathbb{R}^{n}} \mid k(t)\|f\|_{L_{\Lambda^\Psi}\left(\mathbb{R}^{n}\right)} \mathrm{d} t \\
& =\|k\|_{L^{1}\left(\mathbb{R}^{n}\right)}\|f\|_{L_{\Lambda^\Psi}\left(\mathbb{R}^{n}\right)} .
\end{aligned}
$$
The first inequality is a consequence of the $\theta$-integral Minkowski inequality, the third inequality is due to the fact that $\|f(\cdot-t)\|_{L_{\Lambda^\Psi}\left(\mathbb{R}^{n}\right)}=\|f\|_{L_{\Lambda^\Psi}\left(\mathbb{R}^{n}\right)}$
\end{proof}
We now demonstrate that the $L_{\infty}$-norm can be obtained from the $L_{\Lambda^\Psi}$-norm by a limiting process.
\begin{theorem}
Let $f \in L_{1}(X, \mathscr{A}, \mu) \cap L_{\infty}(X, \mathscr{A}, \mu)$. Then

(a) $f \in L_{\Lambda^\Psi}(X, \mathscr{A}, \mu)$ for $1<\Lambda^\Psi<\infty$.

(b) $\lim _{\Lambda^\Psi \rightarrow \infty}\|f\|_{\Lambda^\Psi}=\|f\|_{\infty}$.
\end{theorem}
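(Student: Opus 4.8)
The plan is to handle (a) and (b) in turn, with the pointwise estimate behind (a) doubling as the engine for the upper bound in (b). Throughout I keep the symbol $\Lambda^\Psi$ for the exponent to match the paper's convention, and I freely use the essential bound $|f| \le \|f\|_\infty$ $\mu$-a.e.\ together with $\|f\|_1 < \infty$, both of which are guaranteed by the hypothesis $f \in L_1 \cap L_\infty$. For (a), I would start from $|f|^{\Lambda^\Psi} = |f|^{\Lambda^\Psi - 1}\,|f| \le \|f\|_\infty^{\Lambda^\Psi - 1}|f|$ $\mu$-a.e., and integrate to obtain
$$
\int_X |f|^{\Lambda^\Psi} \, d\mu \le \|f\|_\infty^{\Lambda^\Psi - 1} \int_X |f| \, d\mu = \|f\|_\infty^{\Lambda^\Psi - 1}\|f\|_1 < \infty,
$$
which places $f$ in $L_{\Lambda^\Psi}(X, \mathscr{A}, \mu)$ for every $1 < \Lambda^\Psi < \infty$.

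For (b) the strategy is a sandwich: bound $\|f\|_{\Lambda^\Psi}$ above and below by quantities both tending to $\|f\|_\infty$. I would first dispose of the trivial case $\|f\|_\infty = 0$ (then $f = 0$ $\mu$-a.e.\ and both sides vanish) and henceforth assume $\|f\|_\infty > 0$. Taking $\Lambda^\Psi$-th roots in the estimate from (a) gives the upper bound
$$
\|f\|_{\Lambda^\Psi} \le \|f\|_\infty^{1 - 1/\Lambda^\Psi}\,\|f\|_1^{1/\Lambda^\Psi}.
$$
As $\Lambda^\Psi \to \infty$ the exponent $1 - 1/\Lambda^\Psi \to 1$ and $\|f\|_1^{1/\Lambda^\Psi} \to 1$, so $\limsup_{\Lambda^\Psi \to \infty} \|f\|_{\Lambda^\Psi} \le \|f\|_\infty$.

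For the lower bound I would fix $\varepsilon \in (0, \|f\|_\infty)$ and consider the level set $E_\varepsilon = \{x \in X : |f(x)| > \|f\|_\infty - \varepsilon\}$. By definition of the essential supremum $\mu(E_\varepsilon) > 0$, while the Chebyshev/Markov inequality together with $f \in L_1$ gives
$$
\mu(E_\varepsilon) \le \frac{1}{\|f\|_\infty - \varepsilon}\int_{E_\varepsilon} |f| \, d\mu \le \frac{\|f\|_1}{\|f\|_\infty - \varepsilon} < \infty,
$$
so $0 < \mu(E_\varepsilon) < \infty$. Restricting the integral to $E_\varepsilon$ yields
$$
\|f\|_{\Lambda^\Psi} \ge \left(\int_{E_\varepsilon} |f|^{\Lambda^\Psi} \, d\mu\right)^{1/\Lambda^\Psi} \ge (\|f\|_\infty - \varepsilon)\,\mu(E_\varepsilon)^{1/\Lambda^\Psi}.
$$
Since $0 < \mu(E_\varepsilon) < \infty$, we have $\mu(E_\varepsilon)^{1/\Lambda^\Psi} \to 1$, whence $\liminf_{\Lambda^\Psi \to \infty}\|f\|_{\Lambda^\Psi} \ge \|f\|_\infty - \varepsilon$; letting $\varepsilon \downarrow 0$ gives $\liminf_{\Lambda^\Psi \to \infty}\|f\|_{\Lambda^\Psi} \ge \|f\|_\infty$, and combining with the limsup bound closes the proof.

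The main obstacle is the lower bound, and specifically the need to know that the level set $E_\varepsilon$ has \emph{finite} measure, so that $\mu(E_\varepsilon)^{1/\Lambda^\Psi} \to 1$ rather than diverging. This is precisely where the hypothesis $f \in L_1$ is indispensable: on a space of infinite total measure the set $E_\varepsilon$ could otherwise have infinite measure, and the liminf estimate would break down. The positivity $\mu(E_\varepsilon) > 0$, by contrast, is free from the definition of $\|f\|_\infty$ as an essential supremum, so the delicate half of the squeeze is the finiteness rather than the positivity.
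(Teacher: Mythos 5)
Your proof is correct and follows essentially the same route as the paper: the pointwise estimate $|f|^{\Lambda^\Psi} \leq \|f\|_\infty^{\Lambda^\Psi-1}|f|$ giving $\|f\|_{\Lambda^\Psi} \leq \|f\|_\infty^{1-\frac{1}{\Lambda^\Psi}}\|f\|_1^{\frac{1}{\Lambda^\Psi}}$ for (a), and the limsup/liminf sandwich using the level set $\left\{x : |f(x)| > \|f\|_\infty - \varepsilon\right\}$ for (b). Your only additions are the Markov-inequality verification that this level set has finite measure and the disposal of the trivial case $\|f\|_\infty = 0$; these are points the paper leaves implicit when it asserts $[\mu(A)]^{\frac{1}{\Lambda^\Psi}} \to 1$, so they tighten rather than change the argument.
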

\begin{proof}
Let $f \in L_{1}(X, \mathscr{A}, \mu) \cap L_{\infty}(X, \mathscr{A}, \mu)$. Since $|f| \leq\|f\|_{\infty} \mu$-a.e., implying we have $|f|^{\Lambda^\Psi-1} \leq\|f\|_{\infty}^{\Lambda^\Psi-1}$ therefore $|f|^{\Lambda^\Psi} \leq\|f\|_{\infty}^{\Lambda^\Psi-1}|f|$ whence

$$
\|f\|_{\Lambda^\Psi} \leq\|f\|_{\infty}^{1-\frac{1}{\Lambda^\Psi}}\|f\|_{1}^{\frac{1}{\Lambda^\Psi}}
$$

i.e., $f \in L_{\Lambda^\Psi}(X, \mathscr{A}, \mu)$.

(b) Now we have

$$
\limsup _{\Lambda^\Psi \rightarrow \infty}\|f\|_{\Lambda^\Psi} \leq\|f\|_{\infty} .
$$

On the other hand, let $0<\varepsilon<\frac{1}{2}\|f\|_{\infty}$ and

$$
A=\left\{x \in X:|f(x)|>\|f\|_{\infty}-\varepsilon\right\}
$$

note that $\mu(A)>0$, implying

$$
\int_{X}|f|^{\Lambda^\Psi} \mathrm{~d} \mu \geq \int_{A}|f|^{\Lambda^\Psi} \mathrm{~d} \mu \geq\left(\|f\|_{\infty}-\varepsilon\right)^{\Lambda^\Psi} \mu(A)
$$

implying

$$
\liminf _{\Lambda^\Psi \rightarrow \infty}\|f\|_{\Lambda^\Psi} \geq\left(\|f\|_{\infty}-\varepsilon\right) \liminf _{\Lambda^\Psi \rightarrow \infty}[\mu(A)]^{\frac{1}{\Lambda^\Psi}}
$$

since $\varepsilon$ is arbitrary, we deduce

$$
\liminf _{\Lambda^\Psi \rightarrow \infty} \geq\|f\|_{\infty}
$$

combining the above inequalities, we deduce

$$
\|f\|_{\infty} \leq \liminf _{\Lambda^\Psi \rightarrow \infty}\|f\|_{\Lambda^\Psi} \leq \limsup _{\Lambda^\Psi \rightarrow \infty}\|f\|_{\Lambda^\Psi} \leq\|f\|_{\infty} .
$$

So $\lim _{\Lambda^\Psi \rightarrow \infty}\|f\|_{\Lambda^\Psi}=\|f\|_{\infty}$.
\end{proof}

\end{document}